\newtheorem{theorem}{Theorem}[section]
\newtheorem{proposition}[theorem]{Proposition}
\newtheorem{corollary}[theorem]{Corollary}
\newtheorem{conjecture}[theorem]{Conjecture}
\newtheorem{example}[theorem]{Example}
\newtheorem{lemma}[theorem]{Lemma}
\newtheorem{remark}[theorem]{Remark}
\newtheorem{question}[theorem]{Question}
\newcommand{\diam}{{\rm diam}}
\newcommand{\out}{{\rm out}}
\newcommand{\aA}{{\mathcal A}}
\newcommand{\fF}{{\mathcal F}}
\newcommand{\hH}{{\mathcal H}}
\newcommand{\iI}{{\mathcal I}}
\newcommand{\pP}{{\mathcal P}}
\newcommand{\qQ}{{\mathcal Q}}
\newcommand{\rR}{{\mathcal R}}
\newcommand{\RR}{{\mathbb R}}
\renewcommand{\to}{\rightarrow}
\newcommand{\toto}{\longrightarrow}
\newcommand{\getto}{\longleftarrow}
 \newcommand{\sm}{{\smallsetminus}}
\begin{document}
\title[Enumeration of arborescences and monotone paths]
{Enumerative problems for Arborescences and \\ 
 Monotone Paths on Polytope Graphs}

\author[C.A.~Athanasiadis]{Christos~A.~Athanasiadis}
\address{Department of Mathematics\\
University of Athens\\
15784 Athens, Greece}
\email[C.A. Athanasiadis]{caath@math.uoa.gr}
\author[J.A.~De~Loera]{Jes\'us~A.~De~Loera}
\address{Department of Mathematics\\
University of California\\
Davis, CA 95616, USA}
\email[J.A. De~Loera]{deloera@math.ucdavis.edu}
\author[Z.~Zhang]{Zhenyang~Zhang}
\address{Department of Mathematics\\
University of California\\
Davis, CA 95616, USA}
\email[Z. Zhang]{zhenyangz@math.ucdavis.edu}

\date{\today}
\thanks{ \textit{Keywords and phrases}. Polytopes, directed graphs,
arborescence, monotone path, enumeration, extremal combinatorics, linear programming, simplex method paths, simplex method pivot rules}

\begin{abstract}
Every generic linear functional $f$ on a convex polytope $P$ induces
an orientation on the graph of $P$. From the resulting directed graph
one can define a notion of $f$-arborescence and $f$-monotone path on
$P$, as well as a natural graph structure on the vertex set of
$f$-monotone paths. These concepts are important in geometric
combinatorics and optimization.

This paper bounds the number of $f$-arborescences, the number of
$f$-monotone paths, and the diameter of the graph of $f$-monotone
paths for polytopes $P$ in terms of their dimension and number of
vertices or facets. 
\end{abstract}

\maketitle

\section{Introduction and results}
\label{sec:intro}

Consider a $d$-dimensional convex polytope $P$ in Euclidean 
space $\RR^d$ and a generic linear functional $f$ on $P$, 
meaning a linear functional on $\RR^d$ which is nonconstant 
on every edge of $P$. This paper investigates extremal 
enumerative problems about $f$-arborescences and 
$f$-monotone paths on the graph of $P$. We first introduce briefly these 
notions and refer to Section~\ref{sec:pre} for more 
information.

The functional $f$, which we think of as an objective 
function, induces an orientation on the graph of $P$ 
which orients every edge in the direction of increasing 
objective value. Such orientations of polytope graphs 
are called {\em LP-admissible}; they are of great importance 
in the study of the simplex method for linear 
optimization (see \cite{Develin2004, Klee+Mihalisin2000} 
and the references given there). The resulting directed 
graph, consisting of all vertices and oriented edges of 
$P$ and denoted by $\omega(P,f)$, is acyclic and has a 
unique source and a unique sink on every face of $P$. 
\begin{figure}
\begin{center}
  \includegraphics[scale = 0.25]{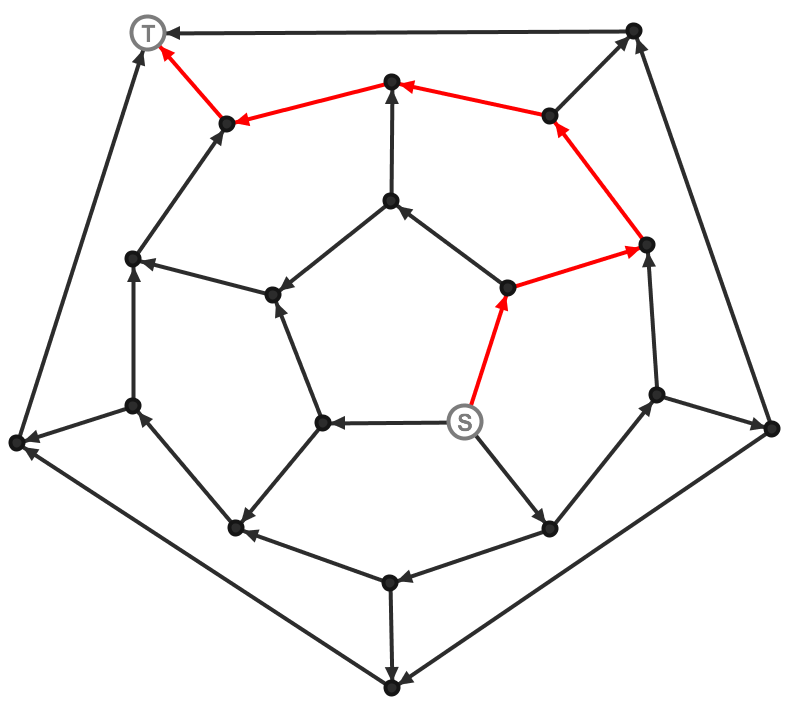}  \includegraphics[width = 0.25\linewidth]{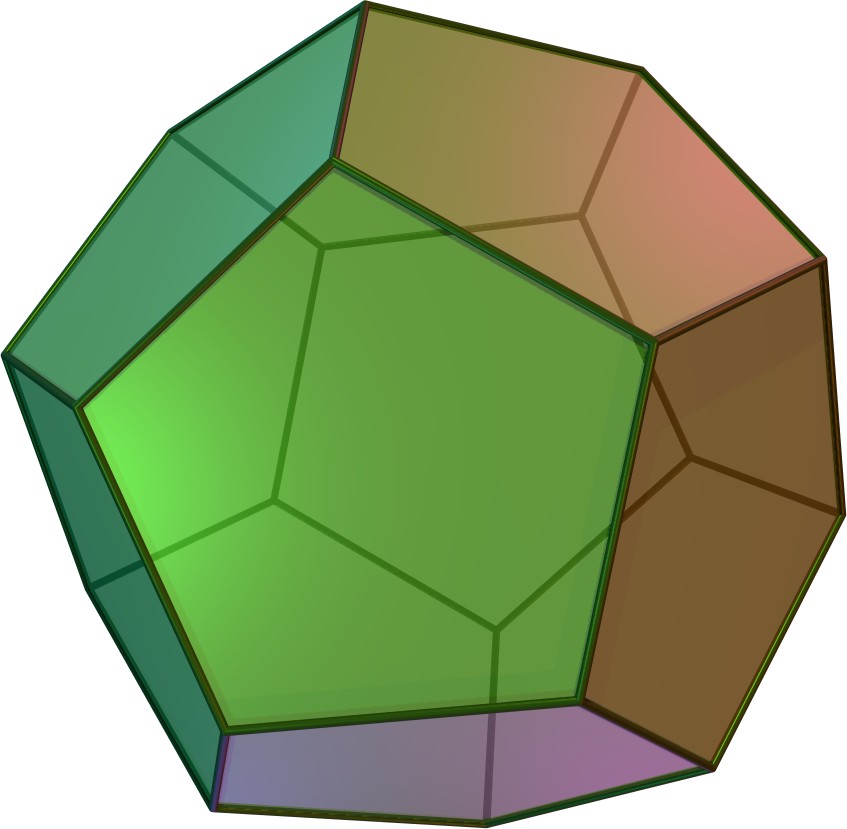}
\includegraphics[width = 0.34\linewidth]{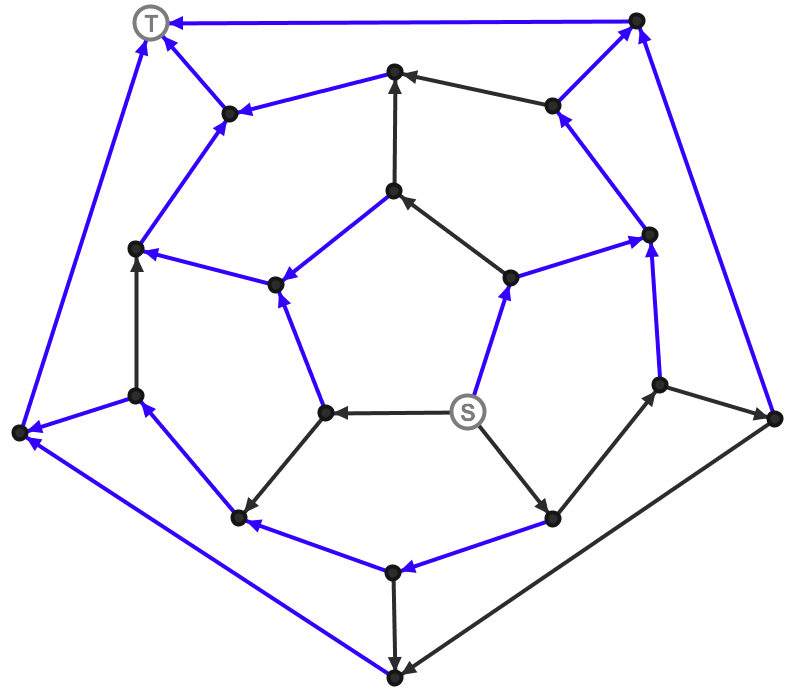}  
\end{center}  
\caption{The regular dodecahedron (center), with examples of an $f$-monotone path (left) and an $f$-arborescence (right) for one of its LP-admissible orientations.} \label{dodecaoriented}
\end{figure}
An \emph{$f$-monotone path} on $P$ is any directed path 
in $\omega(P,f)$ having as initial and terminal vertex 
the unique source, say $v_{\min}$, and the unique sink, 
say $v_{\max}$, of $\omega(P,f)$ on $P$, respectively. 
An \emph{$f$-arborescence} is any (necessarily acyclic) 
spanning subgraph $\aA$ of the directed graph 
$\omega(P,f)$ such that for every vertex $v$ of $P$ 
there exists a unique directed path in $\aA$ with 
initial vertex $v$ and terminal vertex $v_{\max}$ (see Figure \ref{dodecaoriented} for an example).
As explained in the sequel, $f$-arborescences and 
$f$-monotone paths are important notions in geometric 
combinatorics and optimization. When the context is 
clear, we simply refer to them as arborescences and 
monotone paths. 

The set of all $f$-monotone paths on $P$ can be given 
a natural graph structure as follows. We say that two 
$f$-monotone paths on $P$ differ by a \emph{polygon flip} 
(also called \emph{polygon move}, or simply \emph{flip}) 
across a $2$-dimensional face $F$ if they agree 
on all edges not lying on $F$ but follow the two different 
$f$-monotone paths on $F$, from the unique source to the 
unique sink of $\omega(P,f)$ on $F$. The \emph{graph of 
$f$-monotone paths} (also called \emph{flip graph}) 
on $P$ is denoted by $G(P,f)$ and is defined as the simple 
(undirected) graph which has nodes all $f$-monotone 
paths on $P$ and as edges all unordered pairs of such 
paths which differ by a polygon flip across a 
$2$-dimensional face of $P$. 
The graph $G(P,f)$ is  connected; its higher connectivity was studied in 
\cite{AER00}, where it was shown that $G(P,f)$ is 
2-connected for every polytope $P$ of dimension $d \ge 
3$ and $(d-1)$-connected for every simple polytope 
$P$ of dimension $d$ (see Figure \ref{flipgraphexample} for 
an example).

\begin{figure}
\begin{center}
\includegraphics[scale = .15]{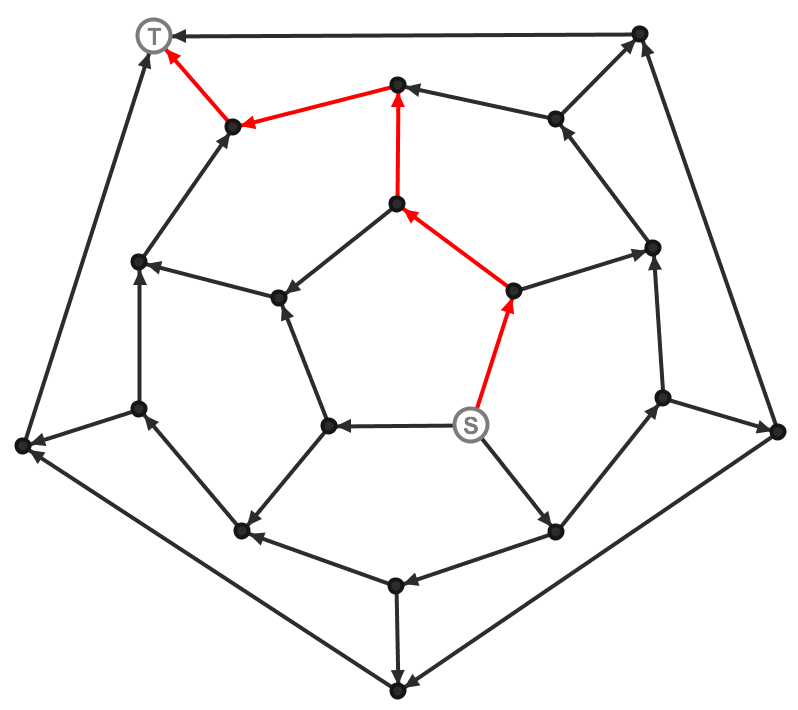}  \includegraphics[scale = 0.15]{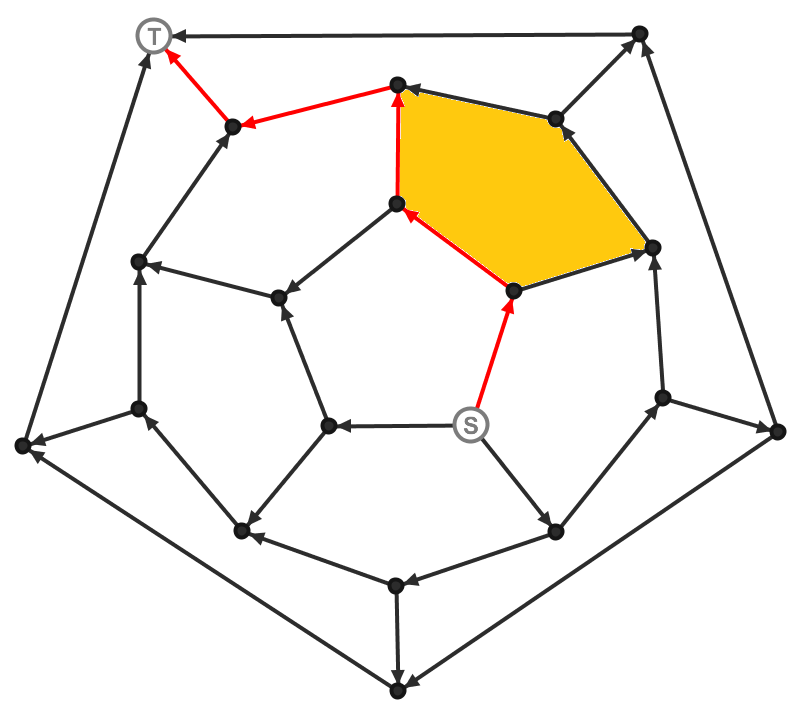}   \hskip .2cm \includegraphics[scale = 0.15]{images/dodeca-1.png}  
\vskip .1cm
\noindent \includegraphics[scale = 0.09]{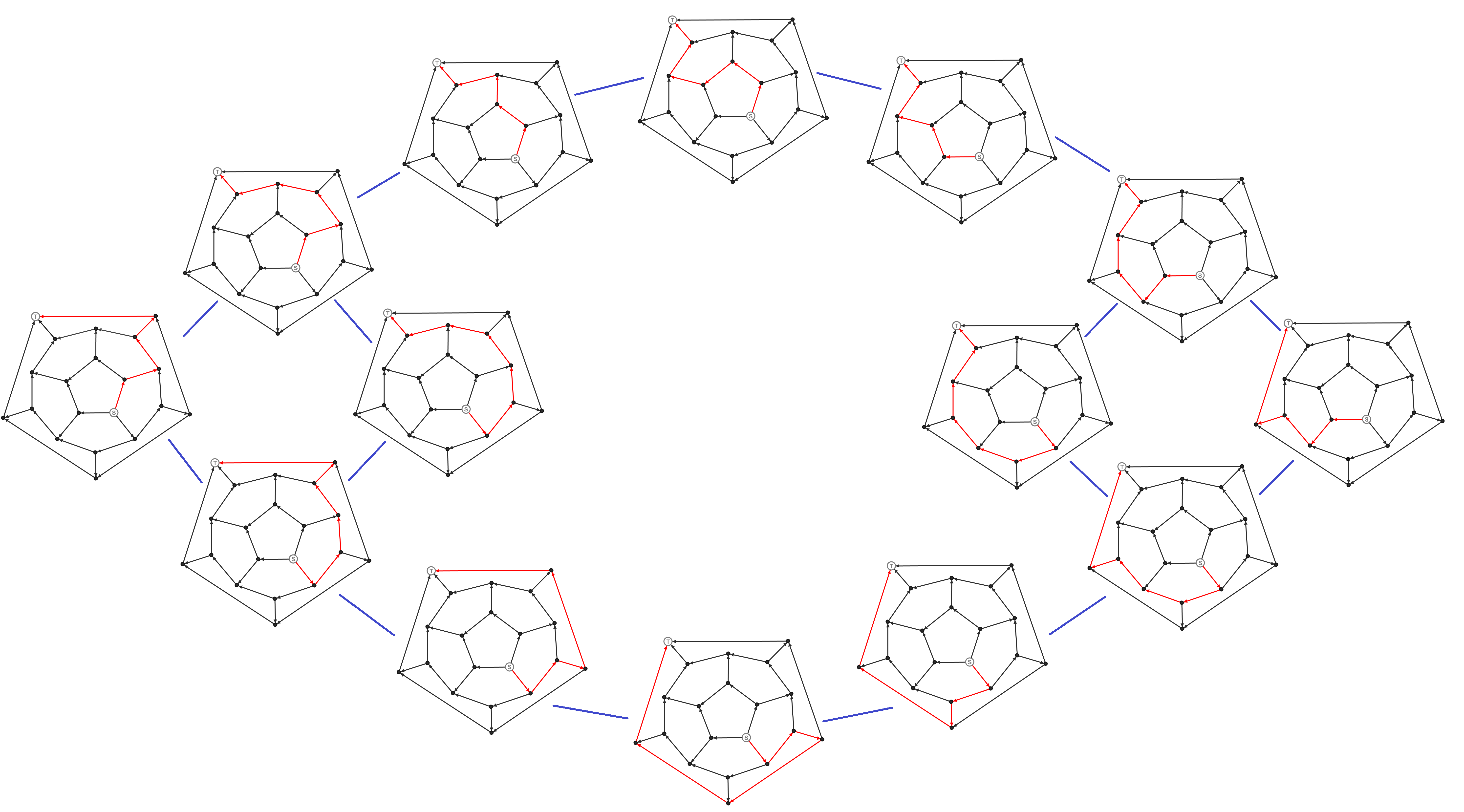}  
\end{center}
\caption{A polygon flip on the oriented dodecahedron and the resulting flip graph} \label{flipgraphexample}
\end{figure}

The main questions addressed in this paper ask to determine:
\begin{itemize}
\item 
the minimum and maximum number of $f$-arborescences 
on $P$,

\item 
the minimum and maximum number of $f$-monotone paths 
on $P$, and

\item 
the minimum and maximum diameter of the graph $G(P,f)$,
\end{itemize}
where $P$ ranges over all convex polytopes of given 
dimension and number of vertices and $f$ ranges over 
all generic linear functionals on $P$. We will also 
consider these (or similar) questions when $P$ is 
restricted to the important class of simple polytopes.

There are good reasons, from both a theoretical and 
an applied perspective, to study these problems. One 
motivation comes from the connection of $f$-arborescences
and $f$-monotone paths to the behavior of the 
simplex method \cite{Schrijver1986}. The simplex 
method produces a partial $f$-monotone path, 
traversing $\omega(P,f)$ from an initial vertex to 
the optimal one. The simplex method has to make 
decisions to choose the improving arcs via a 
\emph{pivot rule}. It is an open problem to find the 
longest possible simplex method paths and little is 
known about bounds (see \cite{BDL2019} and references 
therein). Clearly, the lengths of $f$-monotone paths 
are of great interest, as they bound the number of 
steps in the simplex algorithm. A pivot rule gives a 
mapping from the set of instances of the algorithm to 
the set of $f$-arborescences of $\omega(P,f)$. Two pivot 
rules are equivalent if they always produce the same 
$f$-arborescence. Therefore, given $P$ and $f$, there 
are only finitely many equivalence classes of pivot 
rules and counting $f$-arborescences is a proxy for 
the problem of counting pivot rules. 

Another motivation comes from enumerative and 
polyhedral combinatorics, especially from the theory 
of fiber polytopes \cite{BS92}. The flip graph of 
$f$-monotone paths on $P$ contains a well behaved 
subgraph, namely that induced on the set of 
\emph{coherent} $f$-monotone paths (these are the 
monotone paths which come from the shadow vertex pivot rule 
\cite{DH16}). This subgraph is isomorphic to the 
graph of a convex polytope of dimension $d-1$, where 
$d = \dim(P)$, which is a fiber polytope known as 
a \textit{monotone path polytope} \cite[Section~5]{BS92} 
\cite{BKS94}. 
Monotone paths, monotone path polytopes 
and flip graphs of polytopes of combinatorial interest 
often have elegant combinatorial interpretations. For 
example, the monotone path polytope of a cube is a 
permutohedron \cite[Example~5.4]{BS92}, while the flip 
graph of the latter encodes reduced decompositions of 
a certain permutation and the braid relations among 
them \cite[Section~2.4]{BLSWZ}. More generally, monotone 
paths on 
zonotopes \cite{AtSa2001,RR13} correspond to certain 
galleries of chambers in a central hyperplane 
arrangement and the problem to estimate the diameter 
of the flip graph in this important special case has 
been intensely studied in \cite{Edman-thesis, edman2018zonotopes, RR13}. 
The diameter of flip graphs of fiber polytopes has 
also been studied in \cite{Pournin_2014, Pournin_2017}. 
Moreover, certain zonotopes are in fact monotone path 
polytopes coming from projecting cyclic polytopes 
\cite[Section~3]{ADRS00}, or polytopes which look
like piles of cubes~\cite{At99}. Monotone path 
polytopes are also related to fractional power series 
solutions of algebraic equations \cite{McDonald}. 
The combinatorial properties of $f$-monotone paths 
and flip graphs have thus been studied in comparison to 
those of coherent $f$-monotone paths, but also because 
of their own independent interest.

A special role in our results is played by a 
distinguished member $X(n)$ of the family of stacked 
3-dimensional simplicial polytopes with $n$ vertices. 
As it turns out, this polytope maximizes the number 
of both $f$-arborescences and $f$-monotone paths, and 
possibly the diameter of the flip graph too, in this 
dimension. We refer to Section~\ref{sec:stack} for 
a discussion of stacked polytopes and the precise 
definition of $X(n)$, which we always 
consider endowed with the specific LP-allowable 
orientation given there. We will typically denote by 
$n$ (and sometimes by $n+1$) and $m$ the number of vertices 
and facets of $P$, respectively. Let us also denote by 
\begin{itemize}
\item 
$\tau(P,f)$ the number of $f$-arborescences on $P$,

\item 
$\mu(P,f)$ the number of $f$-monotone paths on $P$,

\item 
$\diam(G)$ the diameter of the graph $G = G(P, f)$.
\end{itemize}

Our first two main results provide a fairly complete 
description of tight bounds for the numbers of 
$f$-arborescences and $f$-monotone paths and the 
diameter of the graph of $f$-monotone paths on a 
$3$-dimensional polytope with given number of vertices. 
The upper bound for the number of $f$-monotone paths 
involves the sequence of 
Tribonacci numbers (sequence A000073 in~\cite{Sloane}), 
defined by the recurrence $T_0 = T_1 = 1$, $T_2 = 2$ 
and $T_n = T_{n-1} + T_{n-2} + T_{n-3}$ for $n \ge 3$. 
\begin{theorem} \label{thm:main-3d}
For $n \ge 4$, 
\begin{align} 
2(n-1) & \ \le \ \tau(P, f) \ \le \ 2 \cdot 3^{n-3} 
\label{eq:tau-bounds-3d} \\ 
\left\lceil {\frac{n}{2}} \right\rceil + 2 & \ \le \ 
\mu(P, f) \ \le \ T_{n-1}
\label{eq:mu-bounds-3d}
\end{align}
for every 3-dimensional polytope $P$ with $n$ vertices 
and every generic linear functional $f$ on $P$. 
The upper bound is achieved by the stacked polytope 
$X(n)$ in both situations. 

The lower bound of 
(\ref{eq:tau-bounds-3d}) can be achieved by pyramids 
and that of (\ref{eq:mu-bounds-3d}) by prisms, when $n$ 
is even, and by wedges of polygons over a vertex, when 
$n$ is odd. In particular, prisms minimize the number 
of $f$-monotone paths over all simple 3-dimensional 
polytopes with given number of vertices. Moreover,
\[ \tau(P,f) \, = \, 3 \cdot 2^{(n-2)/2} \, = \, 
   3 \cdot 2^{m-3} \]
for every 3-dimensional simple polytope $P$ with $n$ 
vertices and $m$ facets. 
\end{theorem}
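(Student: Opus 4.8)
The plan is to base everything on the identity $\tau(P,f) = \prod_{v \neq v_{\max}} d^+(v)$, where $d^+(v)$ is the out-degree of $v$ in $\omega(P,f)$. Indeed, a spanning subgraph $\aA$ of $\omega(P,f)$ is an $f$-arborescence exactly when each $v \neq v_{\max}$ has out-degree $1$ in $\aA$: since the chosen edges increase $f$, the induced walk from any $v$ terminates, necessarily at the unique sink $v_{\max}$, and is unique; conversely an $f$-arborescence plainly has this form. For a simple $3$-polytope the number of vertices of each out-degree is, by the standard theory of LP-orientations, the corresponding entry of the palindromic $h$-vector $(1, m-3, m-3, 1)$ of the dual simplicial polytope, so that the product above equals $1^{m-3}\cdot 2^{m-3}\cdot 3 = 3\cdot 2^{m-3}$; since $m-3 = (n-2)/2$ this is $3\cdot 2^{(n-2)/2}$, which is the last identity of the theorem.

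For the upper bound $\tau(P,f) \le 2\cdot 3^{n-3}$ I would feed two constraints into this formula. Writing $v_1, \dots, v_n$ for the vertices in increasing $f$-order, one has $d^+(v_{n-1}) = 1$ (its only possible out-neighbour is $v_n = v_{\max}$, and, not being the sink, it has one), while $\sum_{v \neq v_{\max}} d^+(v) = e \le 3n-6$. A product of $n-1$ positive integers, one equal to $1$, with sum at most $3n-6$, is maximised by $(3,\dots,3,2,1)$, giving $\tau(P,f) \le 2\cdot 3^{n-3}$; and $X(n)$, with the orientation in which $v_i \to v_j$ precisely when $1 \le j-i \le 3$, has out-degrees $3,\dots,3,2,1,0$, so it attains the bound. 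For the lower bound $\tau(P,f) \ge 2(n-1)$ I would first prove an arc lemma: for every $v \notin \{v_{\min},v_{\max}\}$ the out-edges at $v$ occupy a single arc of the link cycle of $v$. This follows by counting, for each $2$-face, its unique source: if $s(v)$ is the number of $2$-faces sourced at $v$, then $\sum_v s(v) = m$, while $s(v) = d^+(v) - a(v)$ for interior $v$ (with $a(v)\ge 1$ the number of out-arcs), $s(v_{\min}) = \deg(v_{\min})$, $s(v_{\max}) = 0$; inserting $\sum_v d^+(v) = e = n+m-2$ gives $\sum_{v\ \mathrm{interior}} a(v) = n-2$, hence $a(v)\equiv 1$. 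Then $\tau(P,f) = \deg(v_{\min})\prod_{v\ \mathrm{interior}}(1+s(v))$ with $\sum s(v) = m - \deg(v_{\min})$; combining $\prod(1+s_i) \ge 1 + \sum s_i$ with $2e \ge \deg(v_{\min}) + 3(n-1)$ yields $\tau \ge D(n+3-D)/2$ for $D = \deg(v_{\min})$, and $(D-4)(D-(n-1)) \le 0$ settles $4 \le D \le n-1$. The case $D = 3$ is treated separately: when $P$ is simplicial, $v_{\min}$ is stacked onto a facet of an $(n-1)$-vertex polytope $P'$ with $\tau(P) = 3\tau(P')$, so one inducts, and in general one refines the estimate using that the degree constraints preclude concentrating the $s$-mass when $m$ is small. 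Pyramids with apex as source realise this bound.

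For $\mu$ I would first note that inserting a diagonal in a non-triangular $2$-face keeps the orientation LP-admissible (each of the two pieces still has a unique source and sink, namely $s_F$, $t_F$, or an endpoint of the diagonal) and only creates $f$-monotone paths, so the maximum of $\mu$ among $n$-vertex $3$-polytopes is attained by a simplicial one. For $X(n)$, whose graph is the distance-$\le 3$ graph on the path $v_1\cdots v_n$, an $f$-monotone path is exactly a sequence of forward jumps of sizes in $\{1,2,3\}$, so $\mu(X(n),f)$ is the number of compositions of $n-1$ into parts from $\{1,2,3\}$, namely $T_{n-1}$. For the upper bound I would induct on $n$ by reversing a stacking at a degree-$3$ vertex $v \notin \{v_{\min},v_{\max}\}$, so that $\mu(P,f)$ equals $\mu(P',f)$ plus the number of $f$-monotone paths through $v$; with a strengthened inductive hypothesis that bounds, for each vertex, the number of $f$-monotone paths reaching it, the latter should be at most $T_{n-3}+T_{n-4}$, whence $\mu(P,f) \le T_{n-2}+T_{n-3}+T_{n-4} = T_{n-1}$. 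For the lower bound I would compute $\mu$ directly for the prism over a $k$-gon (splitting each monotone path at its unique vertical edge gives $k+2 = n/2+2$) and for the wedge of a $k$-gon over a vertex ($n = 2k-1$, giving $k+2 = \lceil n/2\rceil + 2$), and derive the general bound $\mu(P,f) \ge \lceil n/2\rceil + 2$ from a counting argument over the $2$-faces together with $m = e - n + 2 \ge \lceil n/2\rceil + 2$.

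The main obstacle will be the Tribonacci upper bound on $\mu$. A naive vertex-by-vertex induction fails, because the number of monotone paths reaching $v_i$ cannot be bounded by $T_{i-1}$ locally --- $v_i$ may be adjacent to all four of $v_{i-1},\dots,v_{i-4}$ --- so one must exploit that these counts cannot all be extremal simultaneously, which is where global planarity enters. Running the induction by reversing a stacking also needs care when the only degree-$3$ vertices are $v_{\min}$ or $v_{\max}$, or when there is no degree-$3$ vertex at all (for instance the icosahedron), which calls either for a cleverer LP-admissibility-preserving vertex deletion or for a separate treatment of such polytopes. The general lower bound for $\mu$ and the $D=3$ case of the $\tau$ lower bound are likewise somewhat delicate; by contrast the $\tau$ upper bound and the simple-polytope identity should follow routinely from the product formula and the arc lemma.
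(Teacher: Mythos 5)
Your product formula for $\tau$, the $h$-vector computation for simple polytopes, and the upper bound $\tau(P,f)\le 2\cdot 3^{n-3}$ (via $d^+(v_{n-1})=1$, $\sum d^+ = e\le 3n-6$, and optimizing the product) all match the paper and are correct. The two substantive gaps are in the Tribonacci upper bound and in the $D=3$ case of the $\tau$ lower bound. For $\mu(P,f)\le T_{n-1}$ — which you yourself flag as the main obstacle — your route (triangulate the $2$-faces, then induct by reversing a stacking at an interior degree-$3$ vertex) fails exactly where you say it does: the icosahedron has no degree-$3$ vertex, the strengthened hypothesis that at most $T_{n-3}+T_{n-4}$ monotone paths reach the deleted vertex is asserted rather than derived, and even the reduction to simplicial polytopes quietly needs a realizability argument (Mihalisin--Klee or a geometric perturbation) to know the refined orientation is still LP-induced. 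The paper's key idea, which is missing from your proposal, is the exact fiber-sweep identity $\mu(P,f)=1+\sum_{k=0}^{n-1}(d_k-1)\mu_k(P,f)$, where $\mu_k$ counts partial monotone paths from $v_0$ to $v_k$ and $d_k=\out_f(v_k)$. One then bounds $\mu_k\le T_k$ by induction (a partial path to $v_k$ is a monotone path on $\mathrm{conv}(v_0,\dots,v_k)$), and planarity enters globally through the tail sums $d_{n-k}+\cdots+d_{n-1}\le 3k-3$ (the top $k+1$ vertices induce a planar simple graph); Abel summation against the Tribonacci recurrence gives $\sum_k d_kT_k\le\sum_{k=1}^nT_k$ and the bound follows. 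The same identity with $\mu_k\ge 1$ immediately gives $\mu(P,f)\ge e-n+2=m\ge\lceil n/2\rceil+2$; your ``counting argument over the $2$-faces'' for the lower bound is only a placeholder for this.

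For $\tau(P,f)\ge 2(n-1)$, your arc lemma and the factorization $\tau=D\prod(1+s_i)$ with $\sum s_i=m-D$ are correct and attractive, and your algebra for $4\le D\le n-1$ checks out. But when $D=3$ the estimate $\prod(1+s_i)\ge 1+\sum s_i$ only yields $3(m-2)$, which drops below $2(n-1)$ whenever $m<(2n+4)/3$ — e.g.\ for prisms, where every interior $s_i\le 1$ and the true product is $2^{m-3}$. The stacking-reversal fix covers only simplicial $P$, and ``the degree constraints preclude concentrating the $s$-mass'' is not an argument; this case is genuinely open in your write-up. The paper avoids the dichotomy on $D$ entirely: from $2e=\sum_i D_i$, $D_i\ge d_i+1$ for every non-source vertex of outdegree $\ge 2$, and $D_i\ge 3$ otherwise, it deduces $\sum_{d_i\ge 2}d_i\ge n+1$, and then minimizes a product of integers \emph{each at least} $2$ with that sum — precisely the leverage that your bound $\prod(1+s_i)\ge 1+\sum s_i$ discards by allowing factors equal to $1$.
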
 

\begin{theorem} \label{thm:diam-max}
For every $n \ge 4$, 
\begin{equation}
\label{eq:diam-max}
\lceil \frac{(n-2)^2}{4} \rceil \le \max \diam \, 
G(P,f) \le (n-2) \lfloor \frac{n-1}{2} \rfloor, 
\end{equation}
where $P$ ranges over all 3-dimensional polytopes 
with $n$ vertices and $f$ ranges over all generic 
linear functionals on $P$.
\end{theorem}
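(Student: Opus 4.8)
The plan is to prove the two inequalities of \eqref{eq:diam-max} separately: the upper bound by a counting argument applied to shortest flip sequences, and the lower bound by analyzing the flip graph of the stacked polytope $X(n)$ explicitly.

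\smallskip
\noindent\textbf{Upper bound.}
Fix two $f$-monotone paths on a $3$-polytope $P$ with $n$ vertices and let $\pi_0, \pi_1, \dots, \pi_L$ be a shortest path between them in $G(P,f)$, where $\pi_{i-1}$ and $\pi_i$ differ by a flip across a $2$-face $F_i$. A flip across a $2$-face $F$ replaces one of the two $f$-monotone paths on $\partial F$ by the other, and every edge of $F$ lies on exactly one of these two paths; hence the $i$-th flip changes the incidence status (in or out of the current path) of precisely the $|F_i|$ edges of $F_i$, and $|F_i| \ge 3$ since each $2$-face of $P$ is a polygon. Summing over $i$ gives $3L \le \sum_{i} |F_i| = \sum_{e} t(e)$, where $t(e)$ is the number of flips in the sequence that are performed across a face containing the edge $e$. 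The crux is then the following lemma: along a shortest flip sequence one has $t(e) \le \lfloor (n-1)/2 \rfloor$ for every edge $e$ of $P$. Granting this and using $|E(P)| \le 3n-6$ for $3$-polytopes, one obtains $3L \le (3n-6)\lfloor (n-1)/2\rfloor$, that is, $L \le (n-2)\lfloor (n-1)/2 \rfloor$, as claimed.

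\smallskip
\noindent\textbf{The lemma (main obstacle).}
To bound $t(e)$ I would study, for a fixed edge $e = \{u,w\}$ with $f(u) < f(w)$, the set of indices $i$ with $e \in \pi_i$. Whenever $e \in \pi_i$, the path $\pi_i$ decomposes as a monotone path from $v_{\min}$ to $u$ followed by $e$ followed by a monotone path from $w$ to $v_{\max}$, and these two pieces live on the faces of $P$ determined by $e$. An ``in--out--in'' recurrence of $e$ in the sequence must be accompanied by a genuine change in at least one of these two shorter pieces, which together use at most $n-1$ vertices; turning this into the numerical bound $t(e)\le\lfloor(n-1)/2\rfloor$ — essentially, that between two consecutive appearances of $e$ the part of the path before $u$ or the part after $w$ must strictly change, and a shortest sequence cannot afford to revisit configurations — is the main difficulty on the upper-bound side.

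\smallskip
\noindent\textbf{Lower bound.}
Here I would make the flip structure of $X(n)$ completely explicit. With the specified LP-orientation and the vertices labelled $1,\dots,n$ by increasing objective value, the graph of $X(n)$ is the third power $P_n^3$ of a path (with $i$ and $j$ adjacent iff $|i-j|\le 3$), so an $f$-monotone path corresponds to a composition of $n-1$ into parts $1,2,3$ and $\mu(X(n)) = T_{n-1}$, consistent with Theorem~\ref{thm:main-3d}. Every $2$-face is a small triangle, and each flip is a local move on compositions: ``$1,2\leftrightarrow 3$'' and ``$2,1\leftrightarrow 3$'' are available at an arbitrary position, while ``$1,1\leftrightarrow 2$'' is available only at position $0$ and near the end (positions $n-4$ and $n-3$). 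I would then exhibit two compositions realizing the diameter of $G(X(n),f)$ — for instance the all-ones path together with a suitably balanced composition, whose prefix-sum sets differ in a staircase fashion — and bound their distance below by $\lceil (n-2)^2/4\rceil$. The bound should come from a weighted potential $\Phi$ on compositions that changes in a controlled way under each of the three move types; because the ``$1,1\leftrightarrow 2$'' moves are confined to the ends, a sequence cannot ``shortcut'' a required modification, and tracking $\Phi$ forces the quadratic lower bound, matching the values $3,4,7,9,13,\dots$ for $n=5,6,7,8,9,\dots$. Establishing that the end-restricted moves genuinely obstruct any linear-length transformation is the combinatorial heart of this half.
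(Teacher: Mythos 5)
Both halves of your argument reduce the theorem to a key claim that you explicitly leave unproved, so as written the proposal has genuine gaps. On the upper bound, the reduction $3L\le\sum_e t(e)\le(3n-6)\max_e t(e)$ is sound, but everything hinges on the lemma $t(e)\le\lfloor(n-1)/2\rfloor$ for shortest flip sequences, and your sketch does not establish it: the fact that a shortest sequence never revisits a \emph{whole} monotone path does not prevent the pair (prefix before $u$, suffix after $w$) from taking many distinct values over the successive appearances of $e$, and the number of admissible such pairs is not bounded by $(n-1)/2$ in any evident way. The paper takes a different route that avoids this dynamic question entirely: the factor $\lfloor(n-1)/2\rfloor$ there is a \emph{static} bound on $\nu(\gamma,\gamma')$, the number of maximal intervals of $f$-values on which the two given paths disagree (each such interval forces at least one vertex not shared by both paths, whence $2\nu+1\le n$), and the real work (Theorem~\ref{thm:upper-dist} via Proposition~\ref{prop:polygon-path}) is to construct, through the inverse-limit description of $G(P,f)$ by fibers, a walk joining two paths with $\nu=1$ that crosses each $2$-face on one side of the simple cycle formed by their edges exactly once, giving $d_G(\gamma,\gamma')\le\nu\cdot f_2(P)/2$. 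If your $t(e)$ lemma could be proved it would yield a more elementary argument, but you would need to supply that proof.

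On the lower bound your setup coincides with the paper's Lemma~\ref{lem:diam-stack} (monotone paths on $X(n)$ as compositions of $n-1$ into parts $1,2,3$, equivalently subsets of $\{v_2,\dots,v_{n-1}\}$ with no three consecutive elements; moves $12\leftrightarrow 3$ and $21\leftrightarrow 3$ anywhere, $11\leftrightarrow 2$ only at the two ends), but the heart of the matter --- the potential $\Phi$ and the proof that it forces a quadratic number of moves --- is not supplied, nor is the extremal pair specified. The paper's argument at this point is concrete: take $\gamma$ corresponding to the subset $\{v_2,v_4,\dots,v_{n-2}\}$ and $\delta$ to the empty set, observe that maximal runs of consecutive elements can never merge, that each move affects only one run, and that a run can only be destroyed at one of the two ends of the vertex ordering; deleting $v_2$, then $v_{n-2}$, then $v_4$, and so on therefore costs at least $1+3+5+\cdots+(n-3)=(n-2)^2/4$ moves. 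You would need to carry out an argument of this kind (or define your $\Phi$ explicitly and verify its behaviour under all three move types) before the lower bound stands.
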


Our results are substantially weaker in dimensions
$d \ge 4$ and leave plenty of room for further research.
The upper bounds for the number of $f$-arborescences and
the number of $f$-monotone paths are almost trivial, but
are included here for the sake of completeness.
\begin{theorem} \label{thm:main-d>3}
\begin{enumerate}
\item[(a)]
For $n > d \ge 4$,
\begin{align*}
\tau(P, f) & \le (n-1)! \\
\mu(P, f) & \le 2^{n-2}
\end{align*}
for every $d$-dimensional polytope $P$ with $n$
vertices and every generic linear functional $f$ on $P$.
These bounds are achieved by any 2-neighborly $d$-dimensional
polytope with $n$ vertices.

\item[(b)]
For $m > d \ge 4$,
\[ d \cdot \left( (d-1)! \right)^{m-d} \le \tau(P,f)
  \le \prod\limits_{i=1}^{\lfloor \frac{d}{2} \rfloor} \,
	 i^{{m-d+i-1 \choose i}}
        \prod\limits_{i = \lfloor \frac{d+1}{2} \rfloor}^d \,
	 i^{{m-i-1 \choose d-i}} \]
for every simple $d$-dimensional polytope $P$ with $m$
facets and every generic linear functional $f$ on $P$.
The lower and upper bounds are achieved by the polar
duals of stacked simplicial polytopes and the polar
duals of neighborly simplicial polytopes, respectively,
of dimension $d$ with $m$ vertices.
\end{enumerate}
\end{theorem}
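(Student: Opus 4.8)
The plan rests on one common observation: an $f$-arborescence is nothing but a choice, for each vertex $v \neq v_{\max}$, of a single outgoing edge of $\omega(P,f)$ --- any such choice works, since $\omega(P,f)$ is acyclic with unique sink --- so that $\tau(P,f) = \prod_{v \neq v_{\max}} \mathrm{outdeg}(v)$, with outdegrees taken in $\omega(P,f)$. I would record this first (it is presumably already available from the preliminaries).

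For part (a), I would order the vertices $v_1, \dots, v_n$ with $f(v_1) < \cdots < f(v_n)$. Every out-neighbor of $v_i$ has larger $f$-value, so $\mathrm{outdeg}(v_i) \le n-i$, and the product formula gives $\tau(P,f) \le (n-1)!$. For $\mu(P,f)$, I would note that an $f$-monotone path is determined by its vertex set, since the visiting order is forced by $f$; its non-extreme part is a subset of an $(n-2)$-element set, whence $\mu(P,f) \le 2^{n-2}$. Both bounds are attained by any $2$-neighborly $d$-polytope with $n$ vertices --- the cyclic polytope $C_d(n)$ works for every $d \ge 4$ --- since its graph is the complete graph $K_n$: then $\mathrm{outdeg}(v_i) = n-i$ for all $i$, and each of the $2^{n-2}$ subsets of non-extreme vertices is the vertex set of a monotone path.

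For part (b), I would pass to the polar dual, so that a simple $d$-polytope $P$ with $m$ facets corresponds to a simplicial $d$-polytope $P^{*}$ with $m$ vertices sharing the same $h$-vector $(h_0, \dots, h_d)$, with $h_0 = h_d = 1$ and $h_1 = m-d$. The decisive input is the classical fact (Br{\o}nsted; equivalently, the Morse-theoretic reading of the $h$-vector underlying the standard proof of the Dehn--Sommerville relations) that for generic $f$ exactly $h_i$ vertices of $P$ have outdegree $i$, regardless of $f$. Combined with the first observation this yields $\tau(P,f) = \prod_{i=1}^{d} i^{h_i}$, a quantity strictly increasing in each of $h_2, \dots, h_{d-1}$ while $h_0, h_1, h_d$ stay fixed; hence the extreme values of $\tau$ occur at the componentwise-smallest and componentwise-largest admissible $h$-vectors with $h_1 = m-d$. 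For the lower bound, $g_i := h_i - h_{i-1} \ge 0$ for $1 \le i \le \lfloor d/2 \rfloor$ (part of the $g$-theorem) together with $h_i = h_{d-i}$ forces $h_i \ge m-d$ for all $1 \le i \le d-1$, with equality for the $h$-vector $(1, m-d, \dots, m-d, 1)$ realized by the dual of a stacked simplicial $d$-polytope with $m$ vertices; substituting gives $\tau = d \cdot ((d-1)!)^{m-d}$. For the upper bound, I would invoke McMullen's Upper Bound Theorem $h_i \le \binom{m-d+i-1}{i}$ for $i \le \lfloor d/2 \rfloor$, extended by Dehn--Sommerville to $h_i \le \binom{m-i-1}{d-i}$ on the upper half, with equality for the dual of a cyclic (neighborly) simplicial $d$-polytope with $m$ vertices; substituting into $\prod_i i^{h_i}$ produces the stated product.

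I do not expect a genuine obstacle --- which is exactly why the paper calls these bounds almost trivial --- but the proof is only as solid as the three quoted external results: Br{\o}nsted's identification of the outdegree distribution with the $h$-vector, nonnegativity of the $g$-vector together with the stacked extremizer, and the Upper Bound Theorem together with the cyclic extremizer. The one point requiring care is keeping track of the middle term $h_{d/2}$ when $d$ is even, so that it is counted with the correct multiplicity in the product formula.
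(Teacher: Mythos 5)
Your proposal is correct and follows essentially the same route as the paper: the product formula $\tau(P,f)=\prod_{v\neq v_{\max}}\out_f(v)$ with the outdegree bound $\out_f(v_i)\le n-i$ for part (a), the observation that a monotone path is determined by its vertex set (with equality for $2$-neighborly polytopes) for the bound $\mu(P,f)\le 2^{n-2}$, and for part (b) the identity $\tau(P,f)=\prod_{i=1}^d i^{h_i(P)}$ combined with the upper bound theorem and the generalized lower bound theorem applied componentwise to the $h$-vector, with neighborly and stacked duals as extremizers. Your caution about the middle entry $h_{d/2}$ for even $d$ is well placed: the product as displayed in Theorem~\ref{thm:main-d>3}(b) counts that exponent twice (compare the correctly split indexing in Corollary~\ref{thm:arb-simple}), so your version with the second product starting at $\lfloor d/2\rfloor+1$ is the right one.
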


%

The proofs of the results on $f$-arborescences, given 
in Section~\ref{sec:arbo}, rely on the fact that 
$\tau(P,f)$ is equal to the product 
of the outdegrees of the vertices of the directed 
graph $\omega(P, f)$ other than the sink (see 
Proposition~\ref{prop:product}). This has the 
curious consequence that $\tau(P, f)$ is independent 
of $f$ for every simple polytope $P$. 
The proofs of the results on $f$-monotone paths and 
the diameter of flip graphs, given in 
Sections~\ref{sec:paths} and~\ref{sec:diameter},
respectively, use ideas from \cite[Section~4]{AER00} 
\cite{BKS94}, reviewed in Section~\ref{sec:graph}, 
to construct $G(P,f)$ as an inverse limit in the 
category of graphs and simplicial maps. 
Section~\ref{sec:pre} contains preliminary 
material on polytopes, needed to understand the 
main results and their proofs, defines the 
stacked polytope $X(n)$ and proves a 
combinatorial lemma about its diameter 
(Lemma~\ref{lem:diam-stack}) which implies the 
lower bound in Theorem~\ref{thm:diam-max}. 
Section~\ref{sec:conclusions} concludes with 
comments about the missing bounds and related 
open problems.

\section{Preliminaries}
\label{sec:pre}

This section reviews basic background and terminology on 
convex polytopes and monotone paths and discusses a few 
constructions and a preliminary result 
(Lemma~\ref{lem:diam-stack}) which will be useful in the 
sequel. We use the notation $[n] := \{1, 2,\dots,n\}$ 
for any positive integer $n$ and refer the reader to the 
book~\cite{Ziegler} for any undefined concepts and 
terminology. In particular, the last chapter contains an introduction to fiber polytopes.

\subsection{Some special polytopes}
\label{sec:stack}

Special classes of polytopes play an important role in this 
paper, since they are optimal solutions of the extremal 
problems considered. Recall that a polytope is called 
\emph{simplicial} if all its proper faces are simplices. 
The \emph{simple polytopes} are the polar duals of 
simplicial polytopes. A convenient way to encode the 
numbers of faces of each dimension of a simple or 
simplicial $d$-dimensional polytope $P$ is provided by 
the \emph{$h$-vector}, denoted as $h(P) = (h_0(P), 
h_1(P),\dots,h_d(P))$; see pages 8, 59 and 248 of 
\cite{Ziegler} for details and more information. The 
$h$-vector of a simple polytope $P$ has nonnegative 
integer coordinates which afford an 
elegant combinatorial interpretation: $h_k(P)$ equals the 
number of vertices of $P$ of outdegree $k$ in the directed 
graph $\omega(P,f)$, discussed in the introduction, for 
every generic linear functional $f$ on $P$ (see Sections 
3.4 and 8.3 and Exercise 8.10 in \cite{Ziegler}); in 
particular, the multiset of such outdegrees is 
independent of $f$. 

A polytope is called \emph{2-neighborly} if every pair of 
vertices is connected by an edge. A $d$-dimensional  
simplicial polytope is called \emph{neighborly} if any 
$\lfloor d/2 \rfloor$ or fewer of its vertices form the 
vertex set of a face. Neighborly polytopes other than 
simplices (cyclic polytopes being distinguished 
representatives) exist in dimensions four and higher. 
Their significance comes from the fact that they maximize 
the entries of the $h$-vector among all polytopes with 
given dimension and number of vertices (see pages 15-16, 
and 254-257 of \cite{Ziegler}); in particular, 
they maximize the numbers of faces of each dimension 
among such polytopes. The $h$-vector of a neighborly
$d$-dimensional polytope $P$ with $n$ vertices is given
by the formulas $h_k(P) = {n-d+k-1 \choose k}$ for $0 \le
k \le \lfloor d/2 \rfloor$ and $h_k(P) = h_{d-k}(P)$ for
$0 \le k \le d$ (see Theorem 8.21 and Lemma 8.26 of \cite{Ziegler}).

\begin{figure}[h]
\centering
\includegraphics[scale = 0.3]{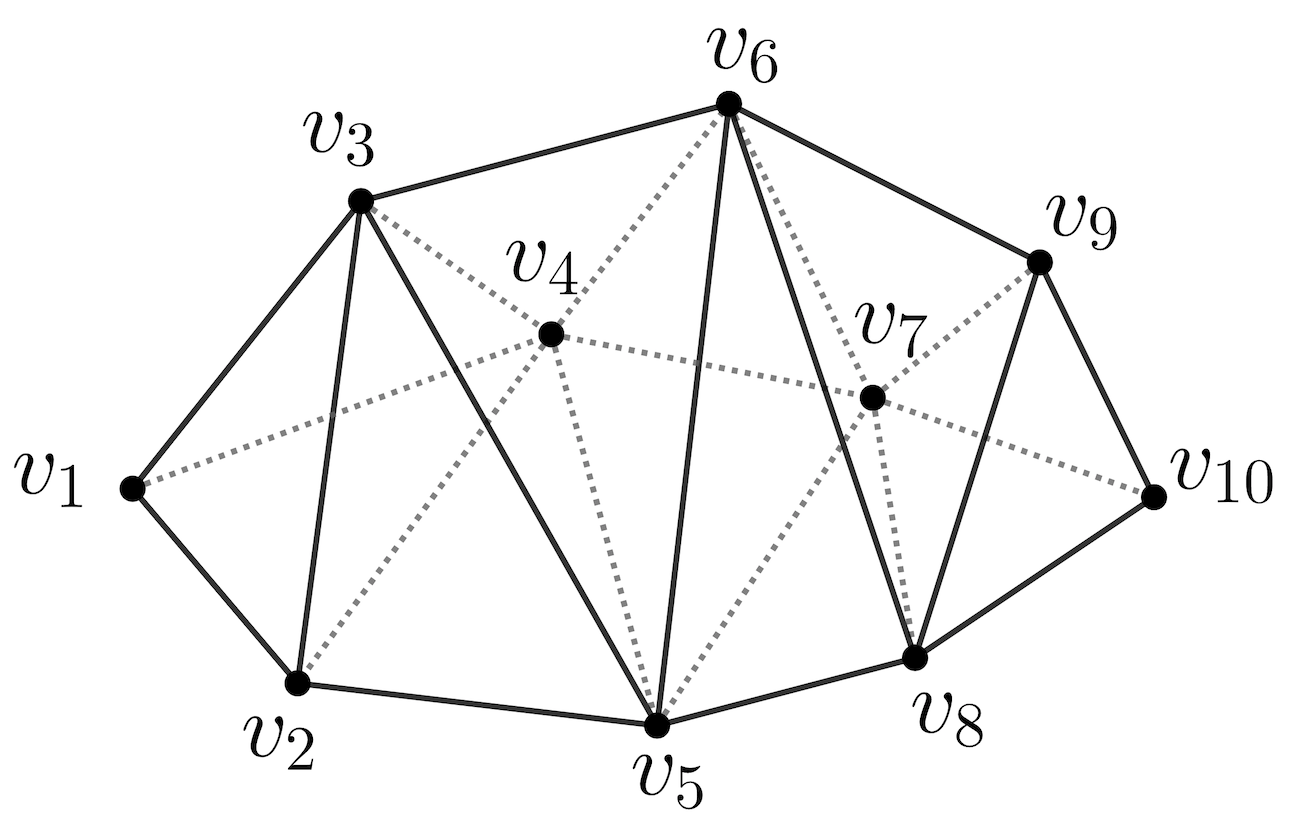}  
\caption{Example of the polytope $X(10)$} \label{Xtacked}
\end{figure}

A \emph{stacked polytope} is any simplicial polytope 
which can be obtained from a simplex by repeatedly 
glueing other simplices of the same dimension along 
common facets, so as to preserve convexity at each 
step. Equivalently, the boundary complex of a stacked 
polytope can be obtained combinatorially from that of 
a simplex by successive stellar subdivisions on 
facets. The $h$-vector of any stacked polytope $P$ 
of dimension $d$ with $n$ vertices has the simple form 
$h(P) = (1,n-d,...,n-d,1)$ (see \cite{McMullen-triangpaper}). 
A fundamental result of Barnette~\cite{barnetteLBT} 
states that among all simplicial polytopes with given 
dimension and number of vertices, the stacked polytopes 
have the fewest possible faces of each dimension. 
Moreover, as a consequence of the generalized lower 
bound theorem, stacked polytopes minimize the entries 
of the $h$-vector among all such polytopes 
(see \cite{KalaiLBT, Murai+Nevo}). 

Many different combinatorial types of stacked polytopes 
are possible. For each $n \ge 4$, we will consider a 
3-dimensional stacked polytope of special type with $n$ 
vertices, denoted by $X(n)$. This polytope comes together 
with a linear functional $f$ which linearly orders its 
vertices as $f(v_1) < f(v_2) < \cdots < f(v_n)$. The 
associated triangulation comprises of all faces of the 
simplices with vertex 
sets $\{v_1, v_2, v_3, v_4\}$, $\{v_2, v_3, v_4, 
v_5\},\dots,\{v_{n-3}, v_{n-2}, v_{n-1}, v_n\}$, so
the dual graph of this triangulation is a path (these 
dual graphs for general stacked polytopes are trees). 
The regularity of this triangulation
easily implies that such polytope $X(n)$ and 
linear functional $f$ exist for every $n \ge 4$. 
Figure~\ref{Xtacked} shows an example with $n=10$. 

A crucial property of $X(n)$ is that the directed graph
$\omega(X(n),f)$ has as arcs the pairs $(v_i, v_j)$ for 
$i, j \in \{1, 2,\dots,n\}$ with $j \in \{i+1, i+2, i+3
\}$. The following combinatorial lemma establishes the 
lower bound for the diameter of flip graphs, claimed in 
Theorem~\ref{thm:diam-max}.

\begin{lemma} \label{lem:diam-stack}
The diameter of the graph of $f$-monotone paths on $X(n)$ 
is bounded below by $\lceil (n-2)^2/4 \rceil$ for every 
$n \ge 4$.
\end{lemma}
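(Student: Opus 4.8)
The plan is first to translate everything into combinatorics of compositions. Since the arcs of $\omega(X(n),f)$ are exactly the pairs $(v_i,v_j)$ with $j-i\in\{1,2,3\}$, an $f$-monotone path on $X(n)$ is the same thing as a composition $(c_1,\dots,c_k)$ of $n-1$ with all parts in $\{1,2,3\}$: the part $c_t$ is the length of the $t$-th edge and the partial sums record which vertices $v_i$ are used. Next I would read off the $2$-faces. The triangulation of $X(n)$ has tetrahedra $\{v_a,v_{a+1},v_{a+2},v_{a+3}\}$ for $1\le a\le n-3$, so the facets of $X(n)$ are the triangles $\{v_a,v_{a+1},v_{a+3}\}$ and $\{v_a,v_{a+2},v_{a+3}\}$ for $1\le a\le n-3$, together with $\{v_1,v_2,v_3\}$ and $\{v_{n-2},v_{n-1},v_n\}$. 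Working out the polygon flips across these facets, $G(X(n),f)$ becomes the graph on compositions of $n-1$ with parts in $\{1,2,3\}$ whose edges are: (i) replacing a part $3$ by two consecutive parts $1,2$ or $2,1$ in any position, and the inverse moves; and (ii) replacing the initial (or final) two parts $1,1$ by a single part $2$, and the inverse moves. The two span-$2$ faces are the only source of the ``boundary'' moves (ii), and this is precisely the feature that makes the diameter large.

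The second ingredient is a description of compositions that is stable under the moves. Refine each composition into its $n-1$ unit cells, so a part of size $s$ occupies a block of $s$ consecutive cells, delimited by two cut-points in $\{0,1,\dots,n-1\}$; call a part of size $\ge2$ a \emph{large block}. Inspecting the six move-types gives: every move changes the left cut-point and the right cut-point of every large block by at most one; the left-to-right order of the large blocks is never disturbed; moves of type (i) keep the number of large blocks fixed; and a move of type (ii) creates or destroys a single large block occupying the first two cells (cut-points $0,2$) or the last two cells (cut-points $n-3,n-1$). I would then introduce a potential $\Phi(C)=\sum_{B}\bigl(g(\lambda_B)+g(\rho_B)\bigr)$, summed over the large blocks $B$ of $C$, where $\lambda_B<\rho_B$ are the cut-points of $B$ and $g\colon\{0,\dots,n-1\}\to\ZZ_{\ge0}$ is weakly unimodal with $g(0)=g(1)=g(n-2)=g(n-1)=0$ and $|g(j+1)-g(j)|\le1$. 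The observations above show that every polygon flip changes $\Phi$ by at most $1$ (for moves (i) the change is a single difference of $g$; for moves (ii) it adds or removes one large block whose two cut-points lie where $g\le 1$ and $g=0$). Hence $\Phi$ is $1$-Lipschitz on $G(X(n),f)$, so $\diam G(X(n),f)\ge\Phi(P)-\Phi(Q)$ for any two $f$-monotone paths $P,Q$.

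To conclude, take $Q$ to be the all-$1$'s path, so $\Phi(Q)=0$, take $g$ to be the largest admissible weight $g(j)=\max\{0,\min(j-1,\,n-2-j)\}$, and take $P$ to be the path $(1,2,2,\dots,2,1)$ — a single $1$ at each end, the rest equal to $2$ — so that its large blocks sit symmetrically around the middle, where $g$ is largest (for the other parity of $n$ one adjusts the endpoints of $P$, or uses the all-$2$'s path). A direct evaluation then gives $\Phi(P)=\Theta(n^2)$, in fact of the form $\tfrac{(n-2)^2}{4}-O(n)$. The hard part — and the delicate heart of the lemma — is exactly this last step: the requirement that $g$ vanish near both ends (which is what keeps the boundary flips (ii) cheap) simultaneously flattens $g$ there and costs $\Phi(P)$ a lower-order $O(n)$, so squeezing out the exact constant $\tfrac14$ rather than merely $\Theta(n^2)$ requires either a more clever choice of weight and of $P$, or a supplementary argument bounding how often an optimal flip sequence is forced to use the boundary moves (ii).
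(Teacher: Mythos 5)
Your reduction of the problem to compositions of $n-1$ with parts in $\{1,2,3\}$, and your list of the flip moves (splitting/merging a part $3$ into $1,2$ or $2,1$ in the interior, and the two boundary moves $1,1\leftrightarrow 2$ at the ends), are correct and equivalent to the paper's encoding, which instead records the \emph{complement} of the vertex set of a path as a subset of $\{v_2,\dots,v_{n-1}\}$ with no three consecutive elements. But your argument has a genuine gap, which you yourself flag: a single scalar potential $\Phi=\sum_B\bigl(g(\lambda_B)+g(\rho_B)\bigr)$ with $g$ $1$-Lipschitz and vanishing at $0,1,n-2,n-1$ is capped by $g(j)\le\min(j-1,\,n-2-j)$, and evaluating it on your path $P$ yields only $\tfrac{(n-3)^2}{4}+O(1)=\tfrac{(n-2)^2}{4}-\Theta(n)$. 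That does not prove the stated bound $\lceil (n-2)^2/4\rceil$, which is exactly what Theorem~\ref{thm:diam-max} needs; the loss is structural, not a matter of optimizing $g$, because forcing $g$ to vanish near both ends (so that the boundary moves cost at most $1$) necessarily flattens $g$ by one unit per block relative to the true cost.

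The way to close the gap — and what the paper actually does — is to replace the single global potential by a \emph{per-block} accounting. The key observations are that maximal large blocks (maximal runs of the complement set, in the paper's language) can never merge, can only be created or destroyed at the two extreme positions, and each flip affects exactly one block. Hence, starting from the path whose complement is $\{v_2,v_4,\dots,v_{n-2}\}$ and ending at the all-ones path (empty complement), each run must be independently transported to the nearer end of $\{2,\dots,n-1\}$ and annihilated there; a run at distance $k$ from that end costs at least $2k+1$ flips (each unit of travel is an extend-then-contract pair, plus one final removal). Summing $1+3+5+\cdots+(n-3)$ over the runs gives exactly $(n-2)^2/4$ for even $n$, with the analogous count in the odd cases. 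Your Lipschitz framework could be repaired along these lines by making the potential vector-valued (one coordinate per block, each with its own cost function $2\min(j-1,n-2-j)+1$), but as written the proposal establishes only the correct asymptotics, not the lemma.
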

\begin{proof}
Let $G$ be the graph of $f$-monotone paths on $X(n)$. 
Denoting $f$-monotone paths as sequences of vertices, 
we set
\[ \gamma \, = \, \begin{cases}
   (v_1, v_3, v_5,\dots,v_{n-1}, v_n), & 
	          \text{if $n \equiv 0 \; (\bmod{\, 2})$} \\
   (v_1, v_2, v_4,\dots,v_{n-3}, v_{n-1}, v_n), & 
	          \text{if $n \equiv 1 \; (\bmod{\, 4})$} \\
	 (v_1, v_3, v_5,\dots,v_{n-2}, v_n), & 
	          \text{if $n \equiv 3 \; (\bmod{\, 4})$}
\end{cases} \]
and $\delta = (v_1, v_2, v_3,\dots,v_n)$. We claim 
that $\gamma$ and $\delta$ are at a distance 
of $\lceil (n-2)^2/4 \rceil$ apart in $G$. Clearly, 
the lemma follows from the claim.

We only consider the case that $n$ is even, the other
two cases being similar. By passing to the complement 
of the set of vertices appearing on an 
$f$-monotone path on $X(n)$, such paths correspond 
bijectively to the subsets of $\{v_2, v_3,\dots,v_{n-1}\}$ 
containing no three consecutive elements $v_{k-1}, v_k, 
v_{k+1}$. The subset which corresponds to $\gamma$, for 
instance, is $\{v_2, v_4,\dots,v_{n-2}\}$ and the one 
which corresponds to $\delta$ is the empty set. The 
2-dimensional faces of $X(n)$ have vertex sets 
$\{v_1, v_2, v_3\}$, $\{v_{n-2}, v_{n-1}, v_n\}$ and
$\{v_{k-1}, v_k, v_{k+2}\}$ and $\{v_{k-1}, v_{k+1}, 
v_{k+2}\}$ for $2 \le k \le n-2$. From these 
facts it follows that polygon flips on $f$-monotone
paths on $X(n)$ correspond to the following operations
on the corresponding subsets:

\begin{itemize}
\item[$\bullet$]
removal of $v_2$ or $v_{n-1}$, if present,
\item[$\bullet$]
inclusion of $v_2$, if absent and not both $v_3$ and $v_4$ 
are present,
\item[$\bullet$]
inclusion of $v_{n-1}$, if absent and not both $v_{n-2}$ and 
$v_{n-3}$ are present,
\item[$\bullet$]
removal or inclusion of one of $v_k, v_{k+1}$, if the other 
is present but $v_{k-1}$ and $v_{k+2}$ are absent.
\end{itemize}

Since the subsets which correspond to $f$-monotone paths on
$X(n)$ contain no three consecutive elements, their maximal
strings of consecutive elements are either singletons, or
contain exactly two elements. Moreover, the strings
cannot be merged with these operations, they cannot be removed
except for $\{2\}$ and $\{n-1\}$, and each operation affects
only one of them. To reach the empty
set from $\{v_2, v_4,\dots,v_{n-2}\}$, one needs to remove
each of $v_2, v_4,\dots,v_{n-2}$. Regardless of the order in
which operations are performed, at least one is needed to
remove $v_2$, at least three more are needed to remove
$v_{n-2}$, at least five more are needed to remove $v_4$, and
so on. For example, to remove $v_{n-2}$ in at most three steps
one needs to first include $v_{n-1}$, then remove $v_{n-2}$
and finally remove $v_{n-1}$ and to remove $v_4$ in at most
five steps one needs to first include $v_3$, then remove $v_4$,
include $v_2$, remove $v_3$ and finally remove $v_2$. This
yields a distance of $1 + 3 + 5 + \cdots + (n-3) = (n-2)^2/4$
between $\gamma$ and $\delta$ in $G$.
\end{proof}

\begin{remark} \label{rem:paths+flips}
\rm Perhaps it is instructive to visualize the process of 
flipping $\gamma$ to $\delta$, described in the previous 
proof. The two $f$-monotone paths are shown on 
Figure~\ref{colored-paths} for $n=10$ and the sequence of 
2-dimensional faces (recording only vertex indices, for 
simplicity) across 
which the flips occur could be $\{1, 2, 3\}$, $\{7, 9, 10\}$, 
$\{7, 8, 10\}$, $\{8, 9, 10\}$, $\{2, 3, 5\}$, 
$\{2, 4, 5\}$, $\{1, 2, 4\}$, $\{1, 3, 4\}$, $\{1, 2, 3\}$, 
$\{5, 7, 8\}$, $\{5, 6, 8\}$, $\{6, 8, 9\}$, $\{6, 7, 9\}$,
$\{7, 9, 10\}$, $\{7, 8, 10\}$ and $\{8, 9, 10\}$.
\qed
\end{remark}

\begin{figure}[h]
\centering
    \includegraphics[scale = 0.3]{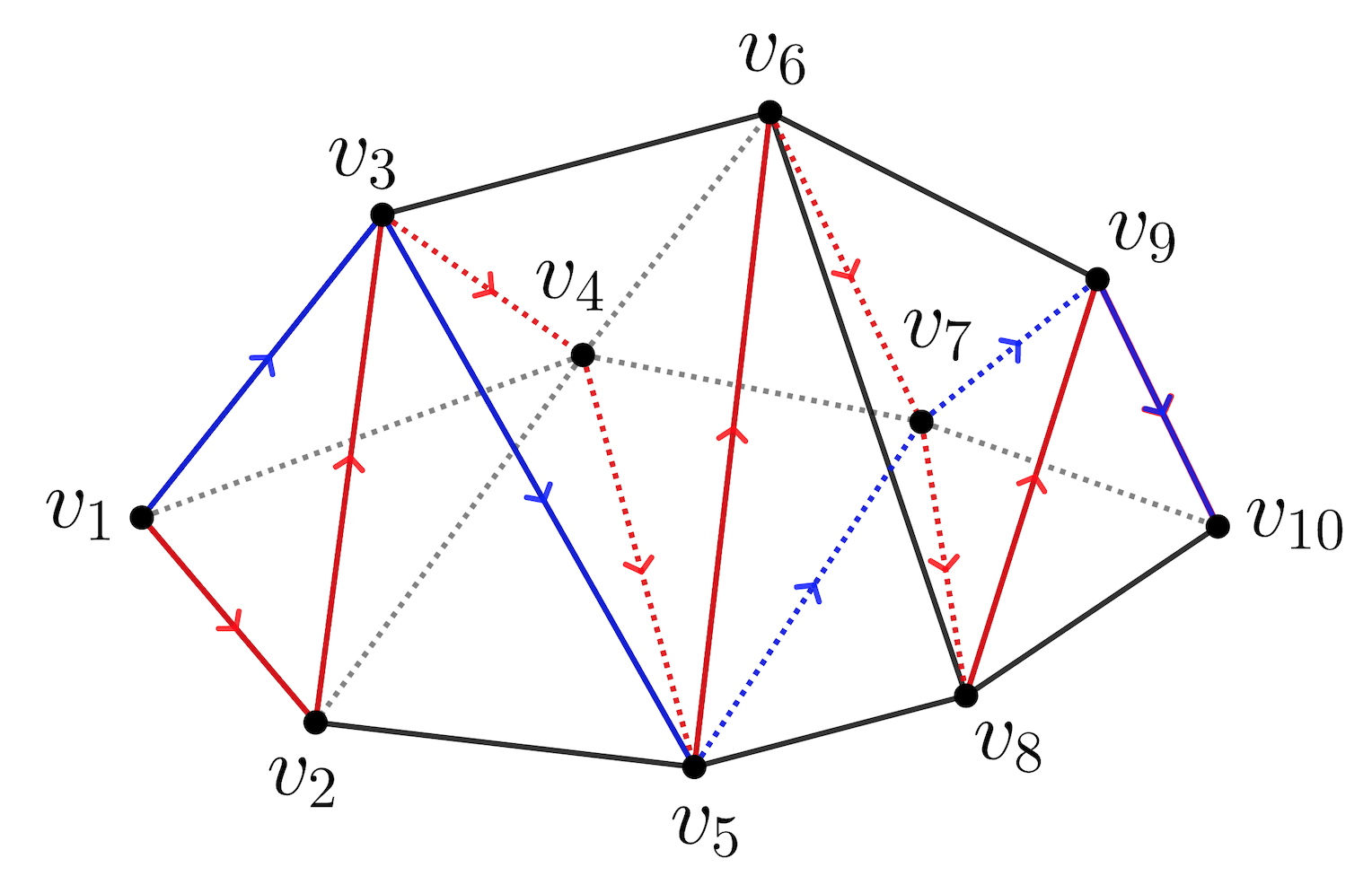}
    \caption{Two monotone paths on $X(10)$} \label{colored-paths}
\end{figure}

Finally, we consider prisms and wedges of polygons. Given 
a $(d-1)$-dimensional polytope $Q$, the \emph{prism} over 
$Q$ is the $d$-dimensional polytope defined as the Cartesian 
product $Q \times [0,1]$. The \emph{wedge} of $Q$ over a 
face $F$ of $Q$ is the $d$-dimensional polytope $W$ 
obtained combinatorially from the prism $Q \times [0,1]$ 
by collapsing the face $F \times [0,1]$ to $F \times {0}$. 
Note that $Q$ becomes a facet of $W$ and that if $F$ is a 
facet and $Q$ is simple, then so is $W$. We will apply the 
wedge construction in the special cases that $Q$ is a 
polygon and $F$ is one of its vertices or edges. 

\begin{figure}[h]
\begin{center}
\includegraphics[scale = 0.6]{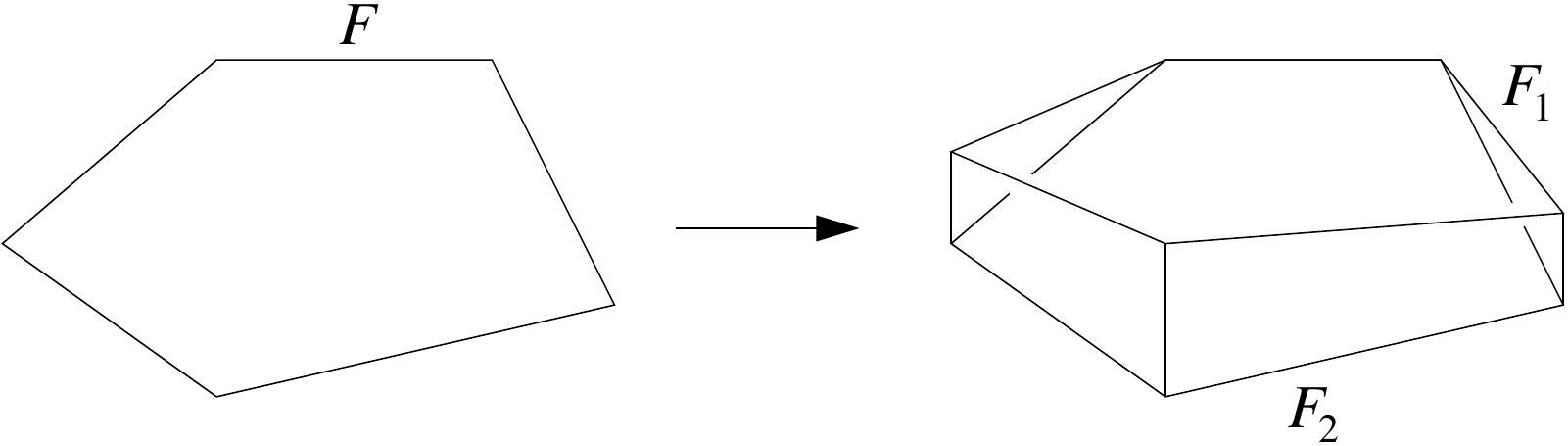}
\end{center}
\caption{The wedge of a pentagon over an edge}
\end{figure}

\subsection{The graph of $f$-monotone paths}
\label{sec:graph}

Let $P$ be a $d$-dimensional polytope and $f$ be a generic 
linear functional on $P$. We will assume that $f$ does not 
take the same value on any two distinct vertices of $P$. 

To investigate the graph of $f$-monotone paths on $P$, we 
will describe another way to construct it from simpler graphs, 
arising in the fibers of the restriction of the projection 
map $f$ on $P$. 
The technical device needed, which we now review, is the 
\emph{inverse limit} in the category of graphs and simplicial 
maps. This concept was introduced in \cite[Section~4]{AER00} 
(with motivation coming from \cite{BKS94}) to study the higher 
connectivity of $G(P,f)$; it leads to various more general 
graphs of partial $f$-monotone paths on $P$, a useful notion 
which allows for inductive arguments.

Let us linearly order the vertices $v_0, v_1,\dots,v_n$ of 
$P$ so that $f(v_0) < f(v_1) < \cdots < f(v_n)$. We recall that 
for every interior point $t$ of the interval $f(P)$, the fiber 
$P(t) := f^{-1}(t) \cap P$ of the map $f: P \to \RR$ is a
$(d-1)$-dimensional polytope and thus it has a well defined 
graph. Setting $t_i = f(v_i)$ for $0 \le i \le n$, we may 
thus consider the graph $G_i$ of $P(t_i)$ for $0 \le i \le n$ 
and the graph $G_{i,i+1}$ of $P(t)$ for some $t_i < t < 
t_{i+1}$, for $0 \le i \le n-1$ (the precise value of $t$ 
being irrelevant because, by construction, the other choices 
of $t$ in the same interval give a normally equivalent fiber 
$P(t)$); see Figure~\ref{cube-fibers} for an example. 
Considering these graphs as one-dimensional simplicial 
complexes, we have a diagram  

\begin{equation}
\label{eq:main-diagram}
G_{0,1} \ \overset{\alpha_1}{\toto} \ G_1 \ 
\overset{\beta_1}{\getto} \ G_{1,2} \ \overset{\alpha_2}{\toto} 
\ G_2 \ \overset{\beta_2}{\getto} \ \cdots \ 
\overset{\beta_{n-2}}{\getto} \ G_{n-2,n-1} \   
\overset{\alpha_{n-1}}{\toto} \ G_{n-1} \ \overset{\beta_{n-1}}
{\getto} \ G_{n-1,n}
\end{equation}

\bigskip
\noindent
of graphs and simplicial maps for which $\alpha_i: G_{i-1,i} 
\to G_i$ and $\beta_i: G_{i,i+1} \to G_i$ result from the 
degeneration of the fiber $P(t)$ when $t$ approaches $t_i$, 
with $t_{i-1} < t < t_i$ or $t_i < t < t_{i+1}$, respectively 
(recall that a simplicial map of one-dimensional simplicial 
complexes maps vertices to vertices and either maps edges 
linearly onto edges, or contracts them to vertices; in 
particular, such a map is determined by its images on 
vertices). 

The inverse limit $G$ of this diagram is defined as follows. 
The nodes are the sequences
\[ (v_{0,1}, v_{1,2},\dots,v_{n-1,n}), \]
where $v_{i-1,i}$ is a vertex of $G_{i-1,i}$ for all $i \in 
[n]$ and $\alpha_i(v_{i-1,i}) = \beta_i(v_{i,i+1})$ for all 
$i \in [n-1]$. Two such sequences, say 
$(u_{0,1}, u_{1,2},\dots,u_{n-1,n})$ and 
$(v_{0,1}, v_{1,2},\dots,v_{n-1,n})$, are adjacent nodes in $G$ 
if there exists a nonempty interval $\iI \subseteq [n]$ such 
that: 

\medskip
\begin{itemize}
\item[$\bullet$]
$u_{i-1,i}$ and $v_{i-1,i}$ are adjacent in $G_{i-1, i}$ for 
$i \in \iI$,
\item[$\bullet$] 
$u_{i-1,i} = v_{i-1,i}$ for $i \in [n] \sm \iI$, and
\item[$\bullet$]
the edges $\{u_{i-1,i}, v_{i-1,i}\}$ and $\{u_{i,i+1}, 
v_{i,i+1}\}$ are mapped homeomorphically onto the same edge of 
$G_i$ by $\alpha_i$ and $\beta_i$, respectively, whenever 
$i, i+1 \in \iI$.
\end{itemize}

This construction associates an inverse limit graph to any 
diagram of graphs and simplicial maps (\ref{eq:main-diagram}).
As explained in \cite[Section~4]{AER00} (see 
\cite[Proposition~4.1]{AER00}), the graph $G$ is isomorphic 
to $G(P,f)$ when the diagram comes from a polytope $P$ and 
linear functional $f$, as just described. The inverse limit 
of a subdiagram of~(\ref{eq:main-diagram}) of the form
\[ G_{k-1,k} \ \overset{\alpha_k}{\toto} \ G_k \ 
\overset{\beta_k}{\getto} \ G_{k,k+1} \ 
\overset{\alpha_{k+1}}{\toto} \ \cdots \ 
\overset{\beta_{\ell-1}}{\getto} \ G_{\ell-1,\ell} \ 
\overset{\alpha_\ell}{\toto} \ G_\ell \ \overset{\beta_{\ell}}
{\getto} \ G_{\ell,\ell+1} , \]
considered in Sections~\ref{sec:paths} and~\ref{sec:diameter}, 
has nodes which can be viewed as partial $f$-monotone paths 
on $P$, starting at the fiber $P(t)$ with $t_{k-1} < t < t_k$ 
and ending at $P(t')$ with $t_\ell < t' < t_{\ell+1}$, and 
adjacency given by a suitable extension of the notion of 
polygon flip, presented in the introduction.


\begin{figure}[h]
\includegraphics[scale = 0.65]{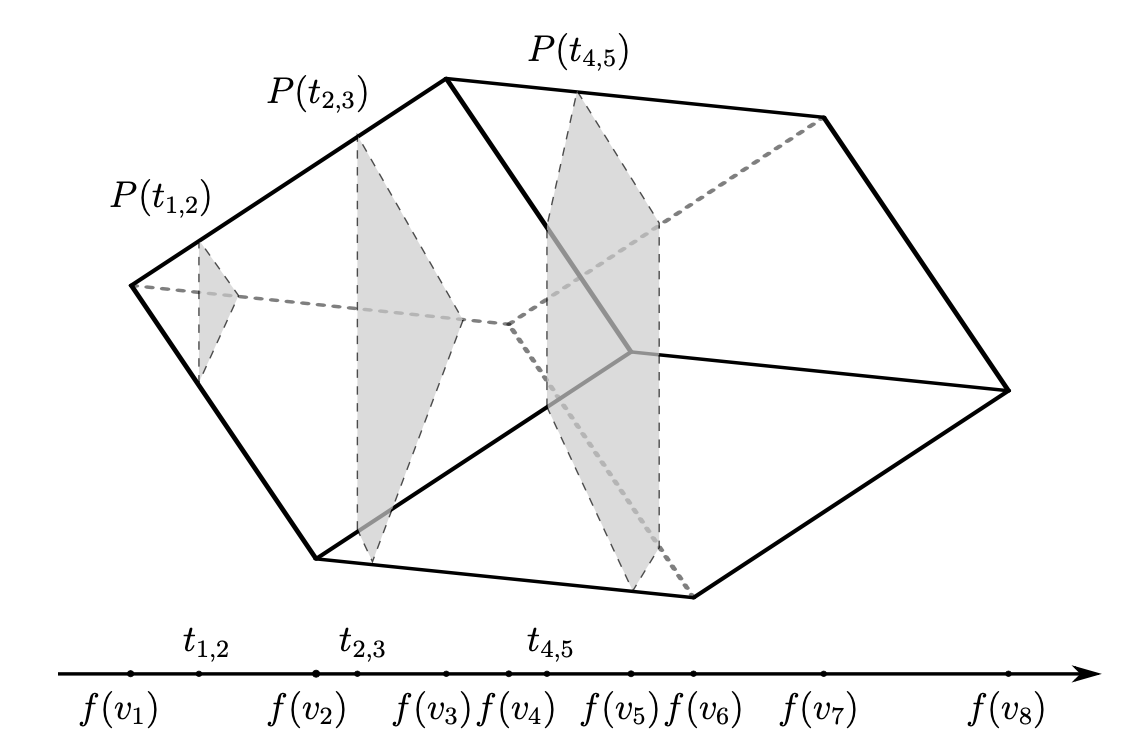}
\caption{A combinatorial cube and some of its fibers}
\label{cube-fibers}
\end{figure}

\section{On the number of arborescences}
\label{sec:arbo}

As explained in the introduction, we are interested in 
counting $f$-arborescences on a polytope $P$, meaning 
oriented spanning trees in the directed graph $\omega(P,f)$ which 
are rooted at the unique sink $v_{\max}$. Recall that 
$\tau(P,f)$ denotes the number of $f$-arborescences on 
$P$. The following statement provides an explicit 
product formula for this number.
\begin{proposition} \label{prop:product}
Given a $d$-dimensional polytope $P$ and generic linear 
functional $f$, let $\out_f(v)$ denote the outdegree of 
the vertex $v$ of $P$ in the directed graph $\omega(P, f)$. 
Then, 
\[ \tau(P, f) \, = \prod\limits_{v \ne v_{\max}} 
    \out_f(v), \]
where the product ranges over all vertices of $P$ other than the sink $v_{\max}$. In particular, 
if P is simple, then 
\[ \tau(P, f) \, = \, \prod\limits_{i=1}^d i^{h_i(P)} \]
is independent of $f$.
\end{proposition}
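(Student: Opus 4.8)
The plan is to prove the product formula $\tau(P,f)=\prod_{v\neq v_{\max}}\out_f(v)$ by exhibiting an explicit bijection between the set of $f$-arborescences and the set of functions that assign to each non-sink vertex $v$ one of its outgoing arcs. Given an $f$-arborescence $\aA$, each vertex $v\neq v_{\max}$ has a unique directed path in $\aA$ to $v_{\max}$, and hence a unique first arc on that path; since $\aA\subseteq\omega(P,f)$, this arc is one of the $\out_f(v)$ arcs leaving $v$. This defines a map from $f$-arborescences to $\prod_{v\neq v_{\max}}(\text{arcs out of }v)$. The content of the argument is that this map is a bijection, which I would establish by the following two observations.

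First, for injectivity and well-definedness of the inverse: if we pick, for every $v\neq v_{\max}$, an arbitrary outgoing arc $e_v=(v,w)$, then the resulting spanning subgraph $\aA=\{e_v:v\neq v_{\max}\}$ is automatically an $f$-arborescence. Indeed, $\aA$ has $n$ vertices and $n-1$ arcs (one per non-sink vertex), so it suffices to check that from every vertex there is a directed path in $\aA$ to $v_{\max}$; uniqueness is then forced, because each vertex has out-degree exactly one in $\aA$ (except the sink, which has out-degree zero), so there is at most one directed path leaving any vertex. The existence of such a path follows from acyclicity of $\omega(P,f)$: starting at any $v$, follow the unique out-arc in $\aA$ repeatedly; since $\omega(P,f)$ has no directed cycles, this walk cannot revisit a vertex, so it must terminate, and the only vertex where it can terminate is $v_{\max}$, the unique vertex with $\out_f=0$. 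This shows the assignment $v\mapsto e_v$ recovers exactly the arborescences, so the two maps are mutually inverse and $\tau(P,f)=\prod_{v\neq v_{\max}}\out_f(v)$. The main (minor) obstacle here is being careful that ``unique source and unique sink on every face'' — in particular on $P$ itself — is what guarantees $v_{\max}$ is the only sink of $\omega(P,f)$; this is exactly the structural fact recalled in the introduction.

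For the second assertion, suppose $P$ is simple of dimension $d$. Then by the combinatorial interpretation of the $h$-vector recalled in Section~\ref{sec:stack}, for any generic $f$ the number of vertices of $P$ with $\out_f(v)=k$ is exactly $h_k(P)$, for $0\le k\le d$, and this multiset of out-degrees does not depend on $f$. The sink $v_{\max}$ is the unique vertex of out-degree $0$, accounting for $h_0(P)=1$. Substituting into the product formula,
\[
\tau(P,f)=\prod_{v\neq v_{\max}}\out_f(v)=\prod_{k=1}^{d}k^{\,h_k(P)},
\]
where the factor $k=0$ is omitted since it corresponds only to the sink. Since the right-hand side depends only on $h(P)$, it is independent of $f$, proving the final claim. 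No real obstacle arises here beyond quoting the standard $h$-vector/out-degree dictionary for simple polytopes.
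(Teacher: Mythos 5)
Your proof is correct and follows essentially the same route as the paper: both arguments identify an $f$-arborescence with a choice of one outgoing arc per non-sink vertex, using acyclicity of $\omega(P,f)$ and uniqueness of the sink, and then invoke the $h$-vector/outdegree dictionary for the simple case. You simply spell out the bijection in more detail than the paper does.
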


\begin{proof}
Since $\omega(P, f)$ is acyclic, an $f$-arborescence is
uniquely determined by a choice of edge coming out of $v$
for every vertex $v$ of $\omega(P, f)$ other than the sink
$v_{\max}$. Since there are exactly $\out_f(v)$ choices
for every such $v$, the proof of the first formula follows. 
The second formula follows from the first and the 
combinatorial interpretation of the $h$-vector of a 
simple polytope $P$, mentioned in Section~\ref{sec:stack}.
\end{proof}

\begin{remark} \label{rem:sum-out} \rm
Since every edge of $\omega(P,f)$ has a unique initial 
vertex, the sum of the outdegrees $\out_f(v)$ of the 
vertices of $P$ in the directed graph $\omega(P, f)$ is 
equal to the number of edges of $P$.
\end{remark}

\begin{corollary} \label{thm:arb-simple}
For $m > d \ge 4$, the maximum number of $f$-arborescences
over all simple $d$-dimensional polytopes with $m$ facets
is achieved by the polar duals of neighborly polytopes and
is given by the formula
\[ \max \tau(P,f) = \prod\limits_{i=1}^{\lfloor \frac{d}{2}
  \rfloor} \, i^{{m-d+i-1 \choose i}}
	 \prod\limits_{i=0}^{\lfloor \frac{d-1}{2} \rfloor} \,
	 (d-i)^{{m-d+i-1 \choose i}}. \]
Similarly, the minimum number of $f$-arborescences in this
situation is achieved by the polar duals of stacked
polytopes and is given by the formula
\[ \min \tau(P,f) = d\cdot
  \left( (d-1)! \right)^{m-d}. \]
For $3$-dimensional simple polytopes $P$ with $m$ facets,
$\tau(P,f) = 3 \cdot 2^{m-3}$.
\end{corollary}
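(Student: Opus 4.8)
The plan is to derive this corollary directly from Proposition~\ref{prop:product} and the extremal properties of $h$-vectors recalled in Section~\ref{sec:stack}. First I would note that, by Proposition~\ref{prop:product}, $\tau(P,f) = \prod_{i=1}^{d} i^{h_i(P)}$ for every simple $d$-polytope $P$, independently of $f$. Since the $h$-vector of $P$ coincides by definition with that of its polar dual simplicial polytope $P^{\ast}$, and since passing to the polar dual matches simple $d$-polytopes with $m$ facets and simplicial $d$-polytopes with $m$ vertices (facets correspond to vertices), the task reduces to extremizing $\Phi(Q) := \prod_{i=1}^{d} i^{h_i(Q)}$ as $Q$ ranges over simplicial $d$-polytopes with $m$ vertices.

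The key observation is that $\Phi$ is weakly monotone in each coordinate of the $h$-vector: if $h_i(Q) \le h_i(Q')$ for all $i$, then $\Phi(Q) \le \Phi(Q')$, because every base $i$ satisfies $i \ge 1$. Hence it suffices to identify the simplicial $d$-polytopes with $m$ vertices whose $h$-vector is coordinatewise maximal, respectively minimal. By the Upper Bound Theorem these are exactly the neighborly polytopes, which exist for all $m > d$ (e.g.\ the cyclic polytopes); by Barnette's Lower Bound Theorem together with the generalized lower bound theorem these are exactly the stacked polytopes. Both facts are recorded in Section~\ref{sec:stack}. Dualizing back, $\max \tau$ is attained at the polar duals of neighborly polytopes and $\min \tau$ at the polar duals of stacked polytopes.

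It then remains to plug in the two extremal $h$-vectors. For a stacked $d$-polytope with $m$ vertices, $h = (1, m-d, \dots, m-d, 1)$, so $\Phi = d \cdot \bigl( (d-1)! \bigr)^{m-d}$, which is the claimed value of $\min \tau$. For a neighborly $d$-polytope with $m$ vertices one has $h_k = \binom{m-d+k-1}{k}$ for $0 \le k \le \lfloor d/2 \rfloor$ and $h_k = h_{d-k}$ for the remaining $k$; I would split the product $\prod_{k=1}^{d} k^{h_k}$ at $k = \lfloor d/2 \rfloor$ and, in the upper half, substitute $k = d-i$ and use the Dehn--Sommerville relation $h_{d-i} = h_i$, thereby recovering exactly the double product displayed in the statement for $\max \tau$. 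Finally, in dimension $d = 3$ the Dehn--Sommerville relations force $h(Q) = (1, m-3, m-3, 1)$ for \emph{every} simplicial $3$-polytope $Q$ with $m$ vertices, so $\tau(P,f) = 3 \cdot 2^{m-3}$ regardless of combinatorial type, in agreement with Theorem~\ref{thm:main-3d} (and with both extremal formulas specialized to $d = 3$).

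There is no genuinely hard step here; the only points demanding attention are the index bookkeeping in the Dehn--Sommerville split --- verifying that $\{1, \dots, \lfloor d/2 \rfloor\}$ and $\{\lfloor d/2 \rfloor + 1, \dots, d\}$ reindex to the two stated product ranges for both parities of $d$ --- and invoking the existence of neighborly polytopes in dimensions $\ge 4$, so that the upper bound is genuinely achieved rather than merely an inequality.
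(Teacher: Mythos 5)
Your proposal is correct and follows essentially the same route as the paper: apply the second formula of Proposition~\ref{prop:product}, use the coordinatewise extremality of the $h$-vectors of neighborly and stacked simplicial polytopes recorded in Section~\ref{sec:stack} (together with the monotonicity of $\prod_i i^{h_i}$ in each $h_i$), and substitute the explicit $h$-vectors, with the $d=3$ case following from $h(P)=(1,m-3,m-3,1)$. The Dehn--Sommerville reindexing you carry out checks out for both parities of $d$.
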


\begin{proof}
The case $d \ge 4$ follows from the last
sentence of Proposition~\ref{prop:product}, the upper
and lower bound theorems for the $h$-vector of a
simplicial polytope, discussed in Section~\ref{sec:stack},
and the formulas for the $h$-vectors of $d$-dimensional
neighborly and stacked simplicial polytope with $m$
vertices given there.
The case $d=3$ follows again from the second formula of
Proposition~\ref{prop:product}, since $h_0(P) = h_3(P) =
1$ and $h_1(P) = h_2(P) = m-3$ for every
$3$-dimensional simple polytope $P$ with $m$ facets.
\end{proof}

The following two statements apply to general polytopes. 
Combined with Corollary~\ref{thm:arb-simple}, they imply 
the results about $f$-arborescences stated in the 
introduction.

\begin{theorem}\label{thm:arb-max-all}
For $n > d \ge 3$, the maximum number of $f$-arborescences 
over all $d$-dimensional polytopes with $n$ vertices is 
achieved by the stacked polytope $X(n)$ for $d=3$ and by 
any 2-neighborly polytope for $d \ge 4$. This number is 
equal to $2 \cdot 3^{n-3}$ and $(n-1)!$ in the two cases, 
respectively.
\end{theorem}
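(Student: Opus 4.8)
The plan is to combine the product formula $\tau(P,f) = \prod_{v \neq v_{\max}} \out_f(v)$ of Proposition~\ref{prop:product} with elementary extremal estimates for sequences of positive integers: a crude one for $d \ge 4$ and a sharper one for $d = 3$. First I would fix notation: order the vertices as $v_1, \dots, v_n$ with $f(v_1) < \cdots < f(v_n)$, so that $v_n = v_{\max}$ is the sink, and set $a_i := \out_f(v_i)$. Then $a_n = 0$ and $\tau(P,f) = a_1 a_2 \cdots a_{n-1}$ by Proposition~\ref{prop:product}. Since every arc leaving $v_i$ ends at some $v_j$ with $j > i$, and since $v_i \neq v_{\max}$ forces at least one such arc, we have $1 \le a_i \le n - i$ for every $i \in \{1, \dots, n-1\}$.

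For $d \ge 4$ this already finishes the job: $\tau(P,f) = \prod_{i=1}^{n-1} a_i \le \prod_{i=1}^{n-1} (n-i) = (n-1)!$, with equality precisely when $a_i = n-i$ for all $i$, that is, when every vertex is joined by an edge to every vertex of larger $f$-value, that is, when the graph of $P$ is complete --- equivalently, when $P$ is 2-neighborly. Such polytopes exist (cyclic polytopes are neighborly, hence 2-neighborly, in every dimension $d \ge 4$), so the bound is attained by any one of them.

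For $d = 3$ I would add two polytopal constraints on the sequence $(a_i)$. The vertex $v_{n-1}$ has $v_n$ as its only possible out-neighbor, so $a_{n-1} = 1$. Next, counting incidences between edges and $2$-faces (each $2$-face has at least three edges and each edge lies in exactly two $2$-faces), together with Euler's relation, shows that a $3$-polytope with $n$ vertices has at most $3n-6$ edges; by Remark~\ref{rem:sum-out} this gives $\sum_{i=1}^{n-1} a_i \le 3n-6$, hence $\sum_{i=1}^{n-2} a_i \le 3n-7$. Now I would invoke the elementary fact --- proved by the exchange inequality $ab < (a-1)(b+1)$, valid whenever $a \ge b+2$ --- that $m$ positive integers with sum at most $3m-1$ have product at most $2 \cdot 3^{m-1}$ (the extremal multiset being $m-1$ copies of $3$ and one copy of $2$). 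Applying this with $m = n-2$ yields $\prod_{i=1}^{n-2} a_i \le 2 \cdot 3^{n-3}$, hence $\tau(P,f) \le 2 \cdot 3^{n-3}$. Finally, for the stacked polytope $X(n)$ the arcs are exactly the pairs $(v_i, v_j)$ with $j \in \{i+1, i+2, i+3\}$, so $a_i = 3$ for $i \le n-3$, $a_{n-2} = 2$ and $a_{n-1} = 1$, giving $\tau(X(n), f) = 3^{n-3} \cdot 2 = 2 \cdot 3^{n-3}$, so the bound is tight.

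The step I expect to require the most care is recognizing that the plain edge bound $\sum a_i \le 3n-6$ is insufficient --- by itself it would only yield roughly $8 \cdot 3^{n-4}$ --- and isolating the single extra identity $a_{n-1} = 1$ that brings the estimate down to $2 \cdot 3^{n-3}$; once that is in hand, the optimization is a routine rearrangement argument. It is also worth double-checking that no further hidden restriction on $(a_i)$ obstructs the value $2 \cdot 3^{n-3}$, which is exactly what the direct computation for $X(n)$ certifies.
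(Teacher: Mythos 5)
Your proposal is correct and follows essentially the same route as the paper: the product formula of Proposition~\ref{prop:product}, the bound $\out_f(v_i)\le n-i$ giving $(n-1)!$ with equality exactly for 2-neighborly polytopes when $d\ge 4$, and for $d=3$ the edge count $3n-6$ combined with a rearrangement argument. The only (harmless) difference is in the bookkeeping for $d=3$: the paper optimizes the product under the constraints $d_{n-1}=1$, $d_{n-2}\le 2$ and $\sum d_i\le 3n-6$, whereas you fold $a_{n-1}=1$ into the sum to get $\sum_{i\le n-2}a_i\le 3n-7$ and then apply the same exchange inequality; both correctly isolate the extra constraint needed to sharpen $8\cdot 3^{n-4}$ down to $2\cdot 3^{n-3}$.
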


\begin{proof}
Let us order the vertices $v_1, v_2,\dots,v_n$
of the $d$-dimensional polytope $P$ so that $f(v_1) \le 
f(v_2) \le \cdots \le f(v_n)$, where $v_n = v_{\max}$. 
Then, arcs 
of the directed graph $\omega(P,f)$ can only be pairs 
$(v_i, v_j)$ with $i < j$ and hence $\out_f(v_i) \le 
n-i$ for every $i \in [n]$. Thus, in view of 
Proposition~\ref{prop:product}, we get 
\[ \tau(P, f) \, = \prod\limits_{i=1}^{n-1} \out_f (v_i) 
   \, \le \prod\limits_{i=1}^{n-1} (n-i) \, = \, (n-1)! \]
and equality holds if and only if $P$ is 2-neighborly.

Since no such polytopes other than simplices exist in 
dimension $d=3$, this case has to be treated separately.
Setting $d_i = \out_f(v_i)$ for $i \in [n-1]$, we have 
positive integers $d_1, d_2,\dots,d_{n-1}$ such that 
$d_{n-1} = 1$ and $d_{n-2} \in \{1, 2\}$. Since $P$ can  
have no more than $3n-6$ edges, we have $d_1 + d_2 + 
\cdots + d_{n-1} \le 3n-6$ by Remark~\ref{rem:sum-out}. 
It is an elementary fact that, under these assumptions, 
the product $\tau(P, f) = d_1 d_2 \cdots d_{n-1}$ is 
maximized when $d_{n-1} = 1$, $d_{n-2} = 2$ and $d_i = 
3$ for every $i \in [n-3]$. Exactly that happens for 
the stacked polytope $X(n)$ and the proof follows.  
\end{proof}

\begin{theorem}\label{thm:arb-min-dim3-all}
For all $n \ge 4$, the minimum number of $f$-arborescences 
over all 3-dimensional polytopes with $n$ vertices is 
equal to $2 (n-1)$. This is achieved by any pyramid $P$ 
and any generic linear functional $f$ which takes 
its minimum value on $P$ at the apex.
\end{theorem}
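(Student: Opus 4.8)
The plan is to establish the two halves of the statement separately: that every pyramid with $f$ minimized at the apex has exactly $2(n-1)$ arborescences, and that $\tau(P,f)\ge 2(n-1)$ for every $3$-dimensional $P$ with $n$ vertices and every generic $f$. Both reduce, via Proposition~\ref{prop:product}, to statements about the multiset of outdegrees of $\omega(P,f)$.

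For the achievability part, I would take $P$ to be the pyramid over an $(n-1)$-gon $Q$ with apex $a$, and $f$ generic with $f(a)$ smaller than the value of $f$ at every vertex of $Q$. Then $v_{\min}=a$, so all $n-1$ edges at $a$ are outgoing and $\out_f(a)=n-1$. Restricted to the $2$-face $Q$ the orientation $\omega(P,f)$ has a unique source $s$ and a unique sink $t$; since $a$ lies below everything, $s$ is the vertex of second-smallest $f$-value overall and $t=v_{\max}$. Each vertex of $Q$ has degree $3$ (two edges along $Q$, one to $a$, and the edge to $a$ is always incoming), so $\out_f(s)=2$ and $\out_f(v)=1$ for the other $n-3$ vertices of $Q$. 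Proposition~\ref{prop:product} then gives $\tau(P,f)=(n-1)\cdot 2\cdot 1^{\,n-3}=2(n-1)$.

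For the lower bound, order the vertices so that $f(v_1)<\cdots<f(v_n)$, write $d_i=\out_f(v_i)$, and note $\tau(P,f)=\prod_{i=1}^{n-1}d_i$ by Proposition~\ref{prop:product}. Every vertex of a $3$-polytope has degree at least $3$, and $\omega(P,f)$ has the unique source $v_1$ and unique sink $v_n$; from these facts I get: $d_1\ge 3$; $v_2$ has only $v_1$ below it, so $d_2\ge 2$; every $v_i$ with $2\le i\le n-1$ has at least one neighbor below it; and if $d_i=1$ then $v_i$ has $\deg(v_i)-1\ge 2$ neighbors below it. Set $S=\{i:2\le i\le n-1,\ d_i=1\}$, $\sigma=|S|$, $t=(n-2)-\sigma$ (so $t\ge 1$ since $2\notin S$), and $Y=\sum_{i\in[2,n-1]\sm S}d_i$. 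Counting the edges of $P$ by their lower endpoint gives
\[ |E(P)|\ \ge\ 2\sigma+t+\deg(v_n)\ \ge\ 2\sigma+t+3\ =\ \sigma+n+1, \]
while $|E(P)|=\sum_{i=1}^{n-1}d_i=d_1+\sigma+Y$, so $d_1+Y\ge n+1$. Since the product of $t$ integers each $\ge 2$ with sum $Y$ is at least $2^{t-1}(Y-2t+2)$, we obtain
\[ \tau(P,f)\ =\ d_1\prod_{i\in[2,n-1]\sm S}d_i\ \ge\ d_1\cdot 2^{t-1}(Y-2t+2), \]
and it remains to minimize the right-hand side over integers $d_1\ge 3$, $t\ge 1$, $Y\ge 2t$ subject to $d_1+Y\ge n+1$.

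That last minimization is the only genuine computation, and it is elementary: writing $Z=Y-2t+2\ge 2$ and $M=n+3-2t$, one minimizes $d_1Z$ subject to $d_1\ge 3$, $Z\ge 2$, $d_1+Z\ge M$. If $M\le 5$ the last constraint is vacuous and the minimum is $6$, whence $\tau(P,f)\ge 3\cdot 2^{t}\ge 3\cdot 2^{(n-2)/2}\ge 2(n-1)$ for $n\ge 4$; if $M\ge 6$ the minimum is $2(M-2)=2(n+1-2t)$, whence $\tau(P,f)\ge 2^{t}(n+1-2t)$, which is nondecreasing in $t$ over the range $1\le t\le (n-3)/2$ forced by $M\ge 6$ and hence at least its value $2(n-1)$ at $t=1$. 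Since $M$ is an integer, these two cases exhaust all $t\ge 1$. I expect the main obstacle to be purely administrative rather than conceptual: pinning down the edge count and the outdegree inequalities $d_1\ge 3$, $d_2\ge 2$ precisely, handling the parity of $n$ in the final optimization, and verifying the smallest values $n=4,5$ by hand, where the tetrahedron and the square pyramid and triangular bipyramid furnish the extremal or near-extremal instances. No polytope surgery or induction is needed; everything is extracted from Proposition~\ref{prop:product} together with the minimum-degree and unique-source/sink properties of LP-admissible orientations.
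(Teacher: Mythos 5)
Your proposal is correct and follows essentially the same route as the paper: both parts reduce to Proposition~\ref{prop:product}, the lower bound comes from the same edge/degree count showing that the outdegrees exceeding $1$ sum to at least $n+1$, and the remaining step is the same elementary minimization of a product of integers $\ge 2$ with bounded-below sum (your two-case analysis on $M$ versus the paper's smoothing plus repeated use of $2m\ge m+2$ are interchangeable presentations of that optimization).
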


\begin{proof}
As a simple application of 
Proposition~\ref{prop:product}, we have $\tau(P,f) = 
2(n-1)$ for every pyramid $P$ over an $(n-1)$-gon and 
every generic functional $f$ which takes its minimum 
value on $P$ at the apex.

We now consider any 3-dimensional polytope $P$ with 
$n$ vertices and any generic functional $f$ on $P$. 
We need to show that $\tau(P, f) \ge 2(n-1)$. We may 
linearly order the vertices $v_1, v_2,\dots,v_n$ of 
$P$ in the order of decreasing outdegree in the 
directed graph $\omega(P,f)$ and denote by $k$ the 
number of those vertices which have outdegree larger 
than one. Then, $k \ge 2$ and the respective outdegrees 
$d_1, d_2,\dots,d_n$ of $v_1, v_2,\dots,v_n$ satisfy 
$d_1, d_2,\dots,d_k \ge 2$, $d_n = 0$ and $d_i = 1$ 
for every other value of $i$. Letting $D_1, 
D_2,\dots,D_n$ be the degrees of $v_1, v_2,\dots,v_n$ 
in the undirected graph of $P$, respectively, we have 
$\tau(P,f) = d_1 d_2 \cdots d_k$ and 
\[ 2 \cdot \sum\limits_{i=1}^n d_i \, = \, 
   \sum\limits_{i=1}^n D_i \]
by Remark~\ref{rem:sum-out}. Clearly,
$D_i = d_i$ for one value of $i \in \{1, 2,\dots,k\}$
(the one corresponding to the source vertex), 
$D_i \ge d_i + 1$ for every other 
such value and $D_i \ge 3$ for all $k < i \le n$. 
These considerations result in the inequality $d_1 + d_2 
+ \cdots + d_k \ge n+1$ and thus, it remains to show 
that $d_1 d_2 \cdots d_k \ge 2(n-1)$ for every $k \ge 2$ 
and all $d_1, d_2,\dots,d_k \in \{2, 3,\dots,n-1\}$ 
summing at least to $n+1$. Indeed, from the inequality 
$ab > (a-1)(b+1)$ for integers $a \le b$, applied 
repeatedly when $b$ is the largest of $d_1, 
d_2,\dots,d_k$ and $a$ is any other number from these 
larger than 2, we get
\[ d_1 d_2 \cdots d_k \, \ge \, (d_1 + d_2 + \cdots + 
   d_k -2k + 2) \cdot 2^{k-1} \, \ge \, (n-2k+3) \cdot 
	 2^{k-1}. \]
Applying repeatedly the fact that $2m \ge m+2$ for $m 
\ge 2$, we conclude that $d_1 d_2 \cdots d_k \ge 2(n-1)$ 
and the proof follows. 
\end{proof}

More generally, for any $d \ge 3$, the $(d-2)$-fold pyramid 
$P$ over an $(n-d+2)$-gon has $n$ vertices and dimension $d$. 
Moreover, if $f$ is chosen so that every cone vertex has 
smaller objective value than any of the vertices of the 
$(n-d+2)$-gon, then the number of $f$-arborescences on $P$ 
is equal to $2 (n-1)(n-2) \cdots (n-d+2)$.   
\begin{question} \label{que:arb-min-all}
What is the minimum number of $f$-arborescences over all 
$d$-dimensional polytopes with $n$ vertices, for $d \ge 4$? 
Does it equal $2 (n-1)(n-2) \cdots (n-d+2)$ for all $n > 
d \ge 4$?
\end{question}

\section{On the number of monotone paths}
\label{sec:paths}

This section investigates the smallest and largest 
possible number of $f$-monotone paths on polytopes. 
For notational 
convenience, we let $v_0, v_1,\dots,v_n$ be the 
vertices of a polytope $P$, linearly ordered so that 
$f(v_0) < f(v_1) < \cdots < f(v_n)$, as in 
Section~\ref{sec:graph}. We recall that $\mu(P, f)$ 
denotes the number of $f$-monotone paths on $P$ and 
that we refer to general directed paths in 
$\omega(P, f)$ as partial $f$-monotone paths, i.e., 
they may start or end at vertices other than 
$v_{\min}$ or $v_{\max}$. 

The following formula is the key to most results in this 
section.

\begin{lemma} \label{lem:numofpaths}
The number of $f$-monotone paths on $P$ can be expressed
as
\[ \mu(P, f) \, = \, 1 + \sum_{k=0}^{n-1} \, (d_k - 1) 
   \mu_k (P, f), \]
where $d_k = \out_f(v_k)$ is the outdegree of $v_k$ in 
$\omega (P, f)$ and $\mu_k(P, f)$ stands for the number 
of partial $f$-monotone paths on $P$ with initial vertex 
$v_0$ and terminal vertex $v_k$. 
\end{lemma}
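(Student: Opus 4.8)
The plan is to count $f$-monotone paths by classifying them according to where they first deviate from the ``greedy'' path that always takes the highest-labeled out-neighbor, or more precisely, to set up a bijective/sign-counting argument keyed to the outdegrees. First I would fix the total order $f(v_0) < \cdots < f(v_n)$ and recall that every $f$-monotone path is a directed path from $v_0$ to $v_n$ in the acyclic graph $\omega(P,f)$; in particular every such path passes through $v_0$ and, by acyclicity, visits a strictly increasing subsequence of indices. The key observation is that a partial $f$-monotone path from $v_0$ to some $v_k$ can be extended to a full $f$-monotone path in several ways, and conversely every full path has a unique ``last vertex before it is forced'' decomposition.

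The cleanest route is to count full monotone paths by the identity $\mu(P,f) = \sum_{v_k \ne v_{\min}} (\text{paths entering } v_k \text{ along its unique... })$, but the outdegree formulation suggests instead to count by the vertex at which a given choice is made. Concretely, I would consider the set of all (full) $f$-monotone paths $\pi$ and, for each such $\pi$ other than one distinguished path, record the \emph{last} vertex $v_k$ on $\pi$ at which $\pi$ has the option of leaving via a non-maximal out-edge but does so, in the following sense: order each vertex's out-neighbors, let the ``canonical continuation'' from any vertex be the one eventually reaching $v_n$ via maximal-index edges, and let the distinguished path $\pi_0$ be the all-canonical one. For $\pi \ne \pi_0$, let $v_k$ be the last vertex on $\pi$ where $\pi$ departs from the canonical continuation; then the portion of $\pi$ from $v_0$ to $v_k$ is an arbitrary partial $f$-monotone path (counted by $\mu_k(P,f)$), and the choice of out-edge at $v_k$ is any of the $d_k - 1$ non-canonical ones (after which the rest of the path is forced to be canonical). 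This gives exactly $\sum_{k=0}^{n-1}(d_k-1)\mu_k(P,f)$ non-canonical paths, and adding the single canonical path $\pi_0$ yields the claimed formula.

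The one point requiring care — and the step I expect to be the main obstacle — is verifying that the decomposition above is well-defined and bijective: I must check that after departing non-canonically at $v_k$, ``take the canonical continuation to $v_n$'' is actually possible (i.e., from \emph{every} vertex there is a canonical directed path to $v_n$, which holds because $\omega(P,f)$ restricted to the interval $[v_k, v_n]$ is acyclic with unique sink $v_n$, so greedily following any fixed tie-broken choice of out-edge terminates at $v_n$), that $v_k$ is genuinely the \emph{last} deviation so the suffix is forced, and that distinct $(v_k, \text{partial path}, \text{edge choice})$ triples give distinct full paths and exhaust all $\pi \ne \pi_0$. One must also confirm the range of summation: $v_n = v_{\max}$ is excluded since it has outdegree $0$, matching the index range $k \le n-1$, and $d_k \ge 1$ for $k < n$ so each summand is nonnegative. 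Once these verifications are in place the identity follows immediately; alternatively, the same count can be packaged as a telescoping recursion on $n$ by peeling off the vertex $v_n$ and observing how $\mu$ and the $\mu_k$ change, but the direct bijective argument seems most transparent.
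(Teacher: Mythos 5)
Your argument is correct, but it takes a genuinely different route from the paper. The paper proves the identity by sweeping across the fibers of $f$: it sets $\eta_k(P,f)$ to be the number of partial $f$-monotone paths starting at $v_0$ and ending in the fiber $f^{-1}(t)\cap P$ with $t_k<t<t_{k+1}$, shows via the edge-deletion map $\varphi_k$ that $\eta_k-\eta_{k-1}=(d_k-1)\mu_k(P,f)$ (a path ending in the previous fiber extends in $d_k$ ways if its last edge terminates at $v_k$ and uniquely otherwise), and telescopes from $\eta_0=d_0$. You instead give a direct partition of the set of full monotone paths: fix a canonical out-edge at each non-sink vertex, let $\pi_0$ be the all-canonical path, and biject each $\pi\ne\pi_0$ with the triple (last deviation vertex $v_k$, prefix from $v_0$ to $v_k$, non-canonical out-edge at $v_k$), the suffix being forced canonical. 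Your verifications are the right ones and they all go through: acyclicity with unique sink $v_n$ guarantees $d_k\ge 1$ for $k<n$ and hence that the canonical continuation from any vertex terminates at $v_n$; monotonicity guarantees the prefix is unconstrained; and the triple is recoverable from the path, so the count is exact. Your argument is more elementary and purely graph-theoretic, while the paper's version is phrased through the fiber/inverse-limit machinery because the intermediate quantities $\eta_k$ (nodes of partial inverse limits) are reused elsewhere in the paper; either proof establishes the lemma.
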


\begin{proof}
Let $P(t) = f^{-1}(t) \cap P$ be the fibers of the map 
$f: P \to \RR$, as in Section~\ref{sec:graph}, and $t_i 
= f(v_i)$ for $0 \le i \le n$. For $0 \le k \le n-1$ let 
$\hH_k(P, f)$ be the set of partial $f$-monotone 
paths on $P$ having initial vertex $v_0$ and ending in 
the fiber $P(t)$ with $t_k < t < t_{k+1}$. Formally, 
these are essentially the nodes of the inverse limit of 
the part 
\[  G_{0,1} \ \overset{\alpha_1}{\toto} \ G_1 \ 
\overset{\beta_1}{\getto} \ G_{1,2} \ \overset{\alpha_2}
{\toto} \ G_2 \ \overset{\beta_2}{\getto} \ \cdots \   
\overset{\alpha_k}{\toto} \ G_k \ \overset{\beta_k}
{\getto} \ G_{k,k+1} \]
of the diagram~(\ref{eq:main-diagram}). Let $\eta_k
(P, f)$ be the number of these partial $f$-monotone 
paths. We claim that 
\begin{equation}
\label{eq:mu-prime}
\eta_k(P, f) - \eta_{k-1}(P, f) \, = \, 
   (d_k - 1) \mu_k (P, f)  
\end{equation}
for every $k \in [n-1]$. Since $\eta_0(P, f) = 
\out_f(v_0) = d_0$ and $\mu_0(P, f) = 1$, this implies 
that
\[ \eta_k(P, f) \, = \, 1 + \sum_{i=0}^k \, (d_i - 1) 
   \mu_i (P, f) \]
for $0 \le k \le n-1$. Since $\eta_{n-1}(P, f) = 
\mu_n(P, f) = \mu(P, f)$, the desired formula follows 
as the special case $k = n-1$ of this equation. 

To verify~(\ref{eq:mu-prime}), let $\varphi_k: \hH_k
(P, f) \to \hH_{k-1}(P, f)$ be the natural map
obtained by restriction of diagrams. More intuitively, 
$\varphi_k(\gamma)$ is obtained from $\gamma \in 
\hH_k(P,f)$ by removing its last edge. Paths in 
$\hH_{k-1}(P, f)$ and $\hH_k(P, f)$ 
either pass through vertex $v_k$ or not, depending on 
whether or not their last edge maps to $v_k$ under the
map $\alpha_k$ or $\beta_k$, respectively. Clearly, for
every $\delta \in \hH_{k-1}(P, f)$ which passes 
through $v_k$ there are exactly $d_k$ paths $\gamma \in 
\hH_k(P, f)$ such that $\varphi_k(\gamma) = 
\delta$, obtained by choosing an edge of $\omega(P,f)$
coming out of $v_k$ and attaching it to $\delta$, while 
for every $\delta \in \hH_{k-1}(P, f)$ which does 
not pass through $v_k$ there is a unique path $\gamma 
\in \hH_k(P, f)$ such that $\varphi_k(\gamma) = 
\delta$. These observations imply directly 
Equation~(\ref{eq:mu-prime}) and the proof follows.
\end{proof}

Recall that the Tribonacci sequence $(T_n)$ is 
defined by the recurrence relation $T_0 = T_1 = 1$, 
$T_2 = 2$ and $T_n = T_{n-1} + T_{n-2} + T_{n-3}$ for 
$n \ge 3$.
\begin{theorem} \label{thm:path-max}
The maximum number of $f$-monotone paths over all 
3-dimensional polytopes with $n+1$ vertices is equal to 
the $n$th Tribonacci number $T_n$ for every $n \ge 3$. 
This is achieved by the stacked polytope $X(n)$.
\end{theorem}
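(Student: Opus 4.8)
The plan is to prove the two bounds separately; the lower bound is a quick computation and the upper bound is the substance.

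\smallskip
\noindent\textbf{Lower bound.} Writing the vertices of the stacked polytope $X(n+1)$ in increasing $f$-order as $v_0,\dots,v_n$, the description of $\omega(X(n+1),f)$ from Section~\ref{sec:stack} says that its arcs are exactly the pairs $(v_i,v_j)$ with $j-i\in\{1,2,3\}$. Hence an $f$-monotone path $v_0=v_{i_0}\to v_{i_1}\to\cdots\to v_{i_r}=v_n$ is precisely a choice of positive gaps $i_1-i_0,\,i_2-i_1,\dots,i_r-i_{r-1}$, each in $\{1,2,3\}$ and summing to $n$, i.e.\ a composition of $n$ into parts $1,2,3$. Conditioning on the first part shows that the number $c(n)$ of such compositions obeys $c(0)=c(1)=1$, $c(2)=2$ and $c(n)=c(n-1)+c(n-2)+c(n-3)$ for $n\ge3$, so $c(n)=T_n$ and $\mu(X(n+1),f)=T_n$.

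\smallskip
\noindent\textbf{Upper bound.} I would prove the stronger statement that $\mu_k(P,f)\le T_k$ for \emph{every} vertex $v_k$ of \emph{every} $3$-dimensional polytope $P$, with $v_0,\dots,v_n$ and $\mu_k$ as in Lemma~\ref{lem:numofpaths}; the theorem is the case $k=n$. Two observations reduce this to combinatorics. First, deleting the last edge of a partial $f$-monotone path from $v_0$ to $v_k$ is a bijection onto the disjoint union, over all $j$ with $(v_j,v_k)$ an arc of $\omega(P,f)$, of the partial $f$-monotone paths from $v_0$ to $v_j$; hence $\mu_0=1$ and $\mu_k=\sum_{j}\mu_j$, the sum over all in-neighbours $v_j$ of $v_k$. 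Second, such a path uses only vertices of $f$-value at most $f(v_k)$, so $\mu_k$ depends only on the subgraph of $\omega(P,f)$ induced on $\{v_0,\dots,v_k\}$; being a subgraph of a planar graph, this simple graph has at most $3(k+1)-6=3k-3$ edges. Thus everything follows from the purely combinatorial claim: \emph{in any acyclic digraph on $\{0,1,\dots,k\}$ all of whose arcs increase the label, which has at most $3k-3$ arcs and no vertex of in-degree $0$ apart from $0$, the number of directed paths from $0$ to $k$ is at most $T_k$.}

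\smallskip
I would prove this claim by induction on $k$, using a strengthened hypothesis that records the ``edge deficiency'' $3k-3-|E|$: a digraph using fewer arcs should have proportionally fewer paths, and this surplus is exactly what pays for a vertex of large in-degree. The easy regime is when the top vertex $k$ has in-degree at most $3$: listing its in-neighbours as $j_1>j_2>\dots>j_c$ with $c\le3$, distinctness forces $j_i\le k-i$, the inductive bound gives $\mu_{j_i}\le T_{j_i}\le T_{k-i}$, and $T_{k-1}+T_{k-2}+T_{k-3}=T_k$ closes the case. The main obstacle is the opposite regime, where $k$ has in-degree $\ge4$ --- equivalently, the presence of ``long'' arcs $(v_j,v_n)$ with $j\le n-4$ --- because the crude estimate $\sum_i T_{j_i}$ can then exceed $T_k$. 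Here one must instead exploit that deleting $k$ leaves a digraph on $\{0,\dots,k-1\}$ with at most $3k-3-\operatorname{in-deg}(k)<3(k-1)-3$ arcs, i.e.\ with strictly positive deficiency, and feed that deficiency back through the strengthened hypothesis to bound $\sum_i\mu_{j_i}$. Fixing the precise shape of the deficiency bound --- strong enough to close the induction when the in-degree is large, yet still valid in the small base cases --- is the technical heart of the argument; once it is in place, planarity of induced subgraphs is the only geometric input, and the rest is bookkeeping together with the lower-bound computation above.
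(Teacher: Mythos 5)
Your lower bound is correct and complete: identifying $f$-monotone paths on $X(n+1)$ with compositions of $n$ into parts $1,2,3$ gives exactly the Tribonacci count, which is all the paper does for this half as well. The problem is the upper bound. Your reduction to the combinatorial claim is legitimate (the recursion $\mu_k=\sum_j\mu_j$ over in-neighbours and the planarity bound $\le 3k-3$ on the edges of the prefix are both correct), but the claim itself is never proved. You candidly flag that the case where the top vertex has in-degree at least $4$ breaks the naive induction, and that ``fixing the precise shape of the deficiency bound \dots is the technical heart of the argument'' --- that heart is missing. No strengthened hypothesis is formulated, so there is nothing to check: as written, the argument only closes when every vertex has in-degree at most $3$, which is false for general $3$-polytopes. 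Worse, your combinatorial claim retains only the \emph{total} edge count of the prefix, whereas the geometric input one actually needs is that \emph{every} induced subgraph on a set of consecutive vertices is planar; discarding that makes it far from clear that any deficiency bookkeeping can succeed, and you give no evidence that it does.

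For comparison, the paper closes exactly this gap by globalizing the degree constraints instead of fighting each high-degree vertex locally. It uses the sweep identity of Lemma~\ref{lem:numofpaths}, $\mu(P,f)=1+\sum_{k=0}^{n-1}(d_k-1)\mu_k(P,f)$ with $d_k=\out_f(v_k)$, together with the inductive bound $\mu_k(P,f)\le T_k$ (your first observation, phrased via convex hulls of initial segments). The large-degree problem is then absorbed by the family of \emph{suffix} inequalities $d_{n-k}+d_{n-k+1}+\cdots+d_{n-1}\le 3k-3$, valid for every $k$ because the top $k+1$ vertices span a planar graph; summation by parts converts these into $\sum_k d_kT_k\le\sum_{k=1}^nT_k$, and the Tribonacci recurrence finishes. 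So a single outdegree (or in your dual setup, in-degree) can be large, but the cumulative constraints prevent the weighted sum from exceeding $T_n$. If you want to salvage your route, you should either prove your combinatorial claim with the full hierarchy of planarity constraints on consecutive induced subgraphs, or switch to an aggregate estimate of this Abel-summation type; the purely local induction you sketch is the one step that genuinely does not close.
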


\begin{proof}
We proceed by induction on $n$. The result holds for 
$n=3$, since there are exactly $T_3 = 4$ monotone 
paths on any 3-dimensional simplex. We assume that it 
holds for integers less than $n$ and consider a 
3-dimensional polytope $P$ with $n+1$ vertices $v_0, 
v_1,\dots,v_n$, linearly ordered as in the beginning 
of this section by a generic functional $f$. 

We wish to apply Lemma~\ref{lem:numofpaths}. Since partial
$f$-monotone paths on $P$ with initial vertex $v_0$ and 
terminal vertex $v_k$ are $f$-monotone paths on the convex
hull of $v_0, v_1,\dots,v_k$, we have $\mu_k (P, f) \le T_k$ 
for $k \in \{3, 4,\dots,n-1\}$ by the induction hypothesis. 
Since this bound holds trivially for $k \in \{0, 1, 2\}$ 
as well, from Lemma~\ref{lem:numofpaths} we get 
\[ \mu(P, f) \, \le \, 1 + \sum_{k=0}^{n-1} \, (d_k - 1) 
   T_k. \]
To bound the right-hand side, we note that
\[ d_{n-k} + d_{n-k+1} + \cdots + d_{n-1} \le 3k-3 \]
for $k \in \{2, 3,\dots,n-1\}$, since $d_{n-k} + d_{n-k+1} 
+ \cdots + d_{n-1}$ is equal to the number of edges of $P$ 
connecting vertices $v_{n-k}, v_{n-k+1},\dots,v_n$ and 
hence to the number of edges of a planar simple graph with 
$k+1$ vertices. From these inequalities and the trivial  
one $d_{n-1} \le 1$, and setting $T_{-1} := 0$, we get
\begin{align*}
\sum_{k=0}^{n-1} d_k T_k \ & \ = \
\sum_{k=1}^n (d_{n-1} + d_{n-2} + \cdots + d_{n-k}) 
             (T_{n-k} - T_{n-k-1}) \\
    & \ \le \ (T_{n-1} - T_{n-2}) + (3k-3) \sum_{k=2}^n 
		  (T_{n-k} - T_{n-k-1}) \\
    & \ = \ T_{n-1} + 2T_{n-2} + 3 T_{n-3} + 3 T_{n-4} + 
		  \cdots + 3 T_0 \\
		& \ = \ \sum_{k=1}^n T_k,
\end{align*}
where the last equality comes from summing the 
recurrence $T_k = T_{k-1} + T_{k-2} + T_{k-3}$ for $k \in 
[n]$. We conclude that
\[ \mu(P, f) \, \le \, 1 + \sum_{k=0}^{n-1} \, (d_k - 1) 
   T_k \, = \, 1 + \sum_{k=0}^{n-1} d_k T_k - 
	 \sum_{k=0}^{n-1} T_k \, \le \, T_n. \]
This completes the induction. 

Finally, it is straightforward to verify that the number 
of $f$-monotone paths on $X(n+1)$ satisfies the Tribonacci 
recurrence (or alternatively, that all inequalities hold 
as equalities in the previous argument) and is thus equal 
to $T_n$ for every $n$.
\end{proof}

\begin{remark} \label{thm:path-max-alldim}
\rm The number of $f$-monotone paths on a polytope $P$ 
with $n+1$ vertices is no larger than the number of subsets 
of its vertex set containing the source and the sink and 
hence at most $2^{n-1}$. Equality holds exactly when
$P$ is 2-neighborly, meaning that the 1-skeleton of $P$
is the complete graph on $n+1$ vertices, since then every 
such subset is the vertex set of an $f$-monotone path on 
$P$. As a result, the maximum number of $f$-monotone paths 
over all $d$-dimensional polytopes with $n+1$ vertices is 
equal to $2^{n-1}$ for all $n \ge d \ge 4$. 
\qed
\end{remark}

The following statement completes the proof of 
the results about the number of $f$-monotone paths, 
stated in the introduction.

\begin{theorem}\label{thm:path-min-dim3}
The minimum number of $f$-monotone paths over all 
$3$-dimensional polytopes with $n$ vertices is equal to 
$\left\lceil{n/2}\right \rceil + 2$. This is achieved 
by prisms, when $n$ is even, and by wedges of polygons over 
a vertex, when $n$ is odd. 

In particular, prisms minimize the number of $f$-monotone 
paths over all simple polytopes of dimension three with 
given number of vertices.
\end{theorem}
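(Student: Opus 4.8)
The plan is to use Lemma~\ref{lem:numofpaths} as the main engine, exactly in the spirit of the proof of Theorem~\ref{thm:path-max}, but now squeezing the sum from below rather than from above. Write $d_k = \out_f(v_k)$ as before, so that $\mu(P,f) = 1 + \sum_{k=0}^{n-1} (d_k-1)\mu_k(P,f)$ where $\mu_k(P,f)$ counts partial $f$-monotone paths from $v_0$ to $v_k$. The first observation is that each $\mu_k(P,f)\ge 1$ (the sink of $\omega(P,f)$ restricted to $\mathrm{conv}(v_0,\dots,v_k)$ is reachable from $v_0$), so $\mu(P,f)\ge 1 + \sum_{k=0}^{n-1}(d_k-1)$. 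But $\sum_{k=0}^{n-1}(d_k-1) = E - n$, where $E$ is the number of edges of $P$, which for a $3$-polytope can be as small as... no, that only gives $E \ge \tfrac{3}{2}n$, hence $\mu \ge \tfrac{n}{2}$, which is slightly off and in the wrong direction since we need $\lceil n/2\rceil + 2$. So the bare bound must be improved: the point is that at least two of the $\mu_k$ with $d_k\ge 2$ must themselves be larger than $1$, or else the contributions stack up because a vertex of outdegree $\ge 2$ forces genuine branching downstream. I would make this precise by the following refinement.

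First I would reduce to counting more carefully. For $k$ ranging over the indices with $d_k \ge 2$, group the terms: each such $k$ contributes at least $\mu_k(P,f)$, and I claim $\sum_{k : d_k\ge 2} \mu_k(P,f)$ is already at least $\lceil n/2\rceil + 1$. The cleanest route is induction on $n$: removing the sink $v_n$ from $P$ leaves $\mathrm{conv}(v_0,\dots,v_{n-1})$, a $3$-polytope (or a polygon, in the degenerate base case) with $n-1$ vertices, on which the induction hypothesis gives $\mu_{n-1}(P,f) = \mu(\mathrm{conv}(v_0,\dots,v_{n-1}),f) \ge \lceil (n-1)/2\rceil + 2$. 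Then $\mu(P,f) = 1 + \sum_{k=0}^{n-1}(d_k-1)\mu_k(P,f) \ge 1 + (d_{n-1}-1)\mu_{n-1}(P,f) + \sum_{k<n-1}(d_k-1)\mu_k(P,f)$. The trouble is that this does \emph{not} immediately add $1$ to the bound every time $n$ increases by $2$; I must track when the new vertex genuinely creates a new path. The right bookkeeping is to show $\mu_{n-1}(P,f) \ge \mu(\mathrm{conv}(v_0,\dots,v_{n-1}),f)$ and separately that passing from $n-1$ to $n$ vertices, either $d_{n-1}\ge 2$ (so $\mu$ grows by at least $\mu_{n-1}\ge \mu(\text{smaller polytope})$, way more than $1$), or $d_{n-1}=1$, i.e. $v_{n-1}$ has a unique outgoing edge to $v_n$, in which case I look one step further back and use that $v_{n-1}$ has degree $\ge 3$ in $P$ so at least two edges enter it, forcing $\mu_{n-1}\ge 2$ whenever $v_{n-1}$ is not the source-neighbor, and then the $k=n-1$ wait—this is the delicate case analysis.

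For the sharpness half I would exhibit the extremal examples directly. When $n$ is even, take $P = Q\times[0,1]$ the prism over an $(n/2)$-gon $Q$, with $f$ chosen generic; the directed graph $\omega(P,f)$ has the bottom copy of $Q$ with a source and sink and a monotone ``up'' path on each side, the top copy similarly, and the $n/2$ vertical edges; a short direct count of $f$-monotone paths, using Lemma~\ref{lem:numofpaths} or just tracing the fiber polygons (which are quadrilaterals degenerating one at a time), yields exactly $n/2 + 2$, and since prisms are simple this also gives the ``in particular'' clause. When $n$ is odd, take the wedge $W$ of an $((n+1)/2)$-gon $Q$ over one of its vertices $v$; then $W$ has $2\cdot\frac{n+1}{2} - 1 = n$ vertices, and with an appropriate $f$ the same kind of fiber-by-fiber count gives $\lceil n/2\rceil + 2 = (n+1)/2 + 2$. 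The main obstacle is clearly the lower-bound induction: making the ``+1 every two vertices'' gain rigorous requires separating the case $d_{n-1}=1$ (vertex with a single outgoing edge) and arguing via the in-degree — that $v_{n-1}$ has at least two neighbors below it in $\omega(P,f)$ unless it is adjacent to $v_0$ and to nothing else below, which can happen at most once — and I expect the clean statement to be an inequality of the form $\mu(P,f) \ge \mu(P',f) + 1$ whenever $P'$ is obtained from $P$ by deleting a degree-$3$ vertex that is either the sink or adjacent to the sink, proved by the fiber/inverse-limit description of Section~\ref{sec:graph}.
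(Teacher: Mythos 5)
Your opening move is exactly the paper's proof, and it already works: the only reason you abandoned it is an off-by-one error in the bookkeeping. For a $3$-polytope $P$ with $n$ vertices $v_0,\dots,v_{n-1}$ (sink $v_{n-1}$), Lemma~\ref{lem:numofpaths} sums over the $n-1$ non-sink vertices, so
\begin{equation*}
\mu(P,f) \;\ge\; 1+\sum_{k=0}^{n-2}(d_k-1) \;=\; 1+E-(n-1) \;=\; E-n+2,
\end{equation*}
where $E=\sum_k d_k$ is the number of edges by Remark~\ref{rem:sum-out}. Since every vertex of a $3$-polytope has degree at least $3$, we have $E\ge\lceil 3n/2\rceil$, hence $\mu(P,f)\ge\lceil 3n/2\rceil-n+2=\lceil n/2\rceil+2$, which is precisely the claimed (tight) bound. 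You instead computed the sum as $E-n$ (effectively including the sink term, or mismatching the number of vertices with the indexing of Section~\ref{sec:paths}), got a bound one short, concluded that ``the bare bound must be improved,'' and launched into an inductive refinement tracking when $\mu_k>1$. That refinement is the real gap in your write-up: you never resolve the case $d_{n-1}=1$ (``wait—this is the delicate case analysis'') and only conjecture the key inequality $\mu(P,f)\ge\mu(P',f)+1$ rather than prove it; moreover deleting a vertex of a $3$-polytope need not leave a polytope whose monotone paths restrict correctly, so that route would require extra care even if pursued. None of it is needed. Your verification of sharpness for prisms and for wedges over a vertex matches the paper's (which likewise leaves it as a direct check), and the ``in particular'' clause for simple polytopes follows as you say. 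Fix the arithmetic in the first paragraph, delete the rest of the lower-bound argument, and you have the paper's proof.
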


\begin{proof}
Applying Lemma~\ref{lem:numofpaths} and 
noting that $\mu_k(P, f) \ge 1$ for every $k$, we get
\[ \mu(P, f) \, \ge \, 1 + \sum_{k=0}^{n-2} \, (d_k - 1) \, 
   = \, \sum_{k=0}^{n-2}d_k - n + 2. \] 
Since $\sum_{k=0}^{n-2}d_k$ is equal to the number of 
edges of $P$ (see Remark~\ref{rem:sum-out}), which is  
bounded below by 
$\left\lceil{3n/2}\right \rceil$, it follows that 
$\mu(P, f) \ge \left\lceil{n/2}\right \rceil+ 2$. It is 
straightforward to verify that prisms achieve the minimum 
when $n$ is even and wedges of polygons over a vertex 
(obtained from prisms by identifying two vertices at 
different levels which are connected by an edge) 
achieve the minimum when $n$ is odd. 
\end{proof}

The lower bound for the number of $f$-monotone paths 
in any dimension, given in the following statement, is 
not expected to be tight.

\begin{proposition} \label{cor:path-min-alldim}
The number of $f$-monotone paths on any polytope of 
dimension $d$ with $n$ vertices is bounded below by 
$\left\lceil{dn/2}\right \rceil-n + 2$.
\end{proposition}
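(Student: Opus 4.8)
The plan is to mimic the proof of Theorem~\ref{thm:path-min-dim3} almost verbatim, replacing the planar edge bound by the general lower bound on the number of edges of a $d$-dimensional polytope. First I would linearly order the vertices $v_0, v_1, \dots, v_{n-1}$ of $P$ by the generic functional $f$ and set $d_k = \out_f(v_k)$ as in Lemma~\ref{lem:numofpaths}. Applying that lemma together with the trivial bound $\mu_k(P,f) \ge 1$ for every $k$ gives
\[ \mu(P,f) \ \ge \ 1 + \sum_{k=0}^{n-2} (d_k - 1) \ = \ \sum_{k=0}^{n-2} d_k - n + 2. \]
By Remark~\ref{rem:sum-out} the sum $\sum_{k=0}^{n-2} d_k$ equals the number of edges of $P$ (the sink $v_{n-1}$ contributes outdegree $0$), so the statement reduces to the purely polytopal fact that a $d$-dimensional polytope with $n$ vertices has at least $\lceil dn/2 \rceil$ edges.

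The second step is to justify that edge lower bound. This is standard: every vertex of a $d$-dimensional polytope lies on at least $d$ edges, since the vertex figure at any vertex is a $(d-1)$-dimensional polytope and hence has at least $d$ vertices, each corresponding to an edge through that vertex. Summing the vertex degrees gives at least $dn$, and since each edge is counted twice, the number of edges is at least $dn/2$, hence at least $\lceil dn/2 \rceil$ because it is an integer. This is exactly the lower bound half of the statement that simple polytopes minimize edge counts, and it appears in \cite{Ziegler}; I would simply cite it rather than reprove it.

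Combining the two steps yields $\mu(P,f) \ge \lceil dn/2 \rceil - n + 2$, as claimed. There is essentially no obstacle here: the only mild subtlety is bookkeeping the index ranges (the functional orders $n$ vertices $v_0, \dots, v_{n-1}$, so the relevant sum runs to $k = n-2$, matching the $n$-vertex convention of the proposition rather than the $n+1$-vertex convention used in Theorem~\ref{thm:path-max}), and noting that for $d = 3$ this recovers the weaker bound $\lceil 3n/2 \rceil - n + 2 = \lceil n/2 \rceil + 2$ of Theorem~\ref{thm:path-min-dim3}, consistent with the fact that the planar edge bound $\lceil 3n/2 \rceil$ coincides with $\lceil dn/2 \rceil$ when $d = 3$. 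One could add a sentence remarking why the bound is not expected to be tight in general, namely that the estimate $\mu_k(P,f) \ge 1$ is very wasteful for $d \ge 4$.
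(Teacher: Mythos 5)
Your proof is correct and follows the paper's own argument exactly: the paper likewise combines Lemma~\ref{lem:numofpaths} with the bound $\mu_k(P,f)\ge 1$ and the fact that $\sum_{k} d_k$, being the number of edges of $P$, is at least $\left\lceil dn/2\right\rceil$ because every vertex of a $d$-polytope has degree at least $d$. Your index bookkeeping and the citation of the standard minimum-degree fact are both fine; nothing further is needed.
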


\begin{proof}
Once again, this follows from the inequality 
$\sum_{k=0}^{n-2} d_k \ge \left\lceil{dn/2}\right 
\rceil$ and Lemma~\ref{lem:numofpaths}.
\end{proof}

We end this section with a conjecture for the maximum 
number of monotone paths on simple 3-dimensional polytopes. 
The proposed maximum can be achieved by wedges of polygons 
over an edge whose vertices are the source and the sink, and all vertices of the polytope lie on a monotone 
path. We recall that the Fibonacci sequence $(F_n)$ is 
defined by the recurrence $F_1 = F_2 = 1$ and $F_n = 
F_{n-1} + F_{n-2}$ for $n \ge 2$.

\begin{conjecture}\label{conj:path-max-simple}
We have $\mu(P, f) \le F_{n+2} + 1$ for every simple 
3-dimensional polytope $P$ with $2n$ vertices.
\begin{figure*}[h]
\includegraphics[scale = 0.35]{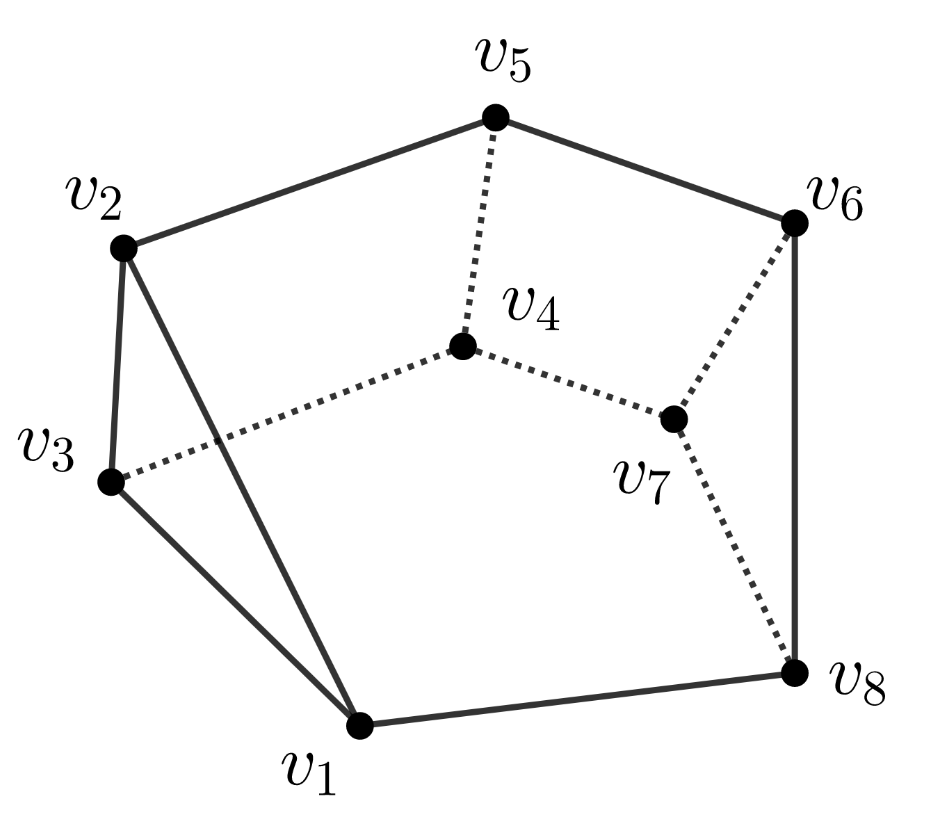}
\caption{An example of a polytope on 8 vertices 
conjectured to be the maximizer of the number of 
monotone paths among simple polytopes. $f(v_1) < f(v_2) < \cdots < f(v_8)$.} 
\end{figure*}
\end{conjecture}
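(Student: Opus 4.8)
The plan is to start from the identity of Lemma~\ref{lem:numofpaths} and use the combinatorial interpretation of the $h$-vector of a simple $3$-polytope. If $P$ is simple and $3$-dimensional with $2n$ vertices then $h_0(P) = h_3(P) = 1$ and, by Dehn--Sommerville, $h_1(P) = h_2(P) = n-1$; since $h_k(P)$ is the number of vertices of outdegree $k$ in $\omega(P,f)$, the vertices $v_0 = v_{\min}, v_1, \dots, v_{2n-1} = v_{\max}$ consist of the source (outdegree $3$), $n-1$ vertices of outdegree $2$, $n-1$ vertices of outdegree $1$, and the sink (outdegree $0$), regardless of $f$. Substituting $d_0 = 3$, $\mu_0(P,f) = 1$ and $d_k - 1 \in \{0,1\}$ for $1 \le k \le 2n-2$ into Lemma~\ref{lem:numofpaths} gives
\[ \mu(P,f) \;=\; 3 \;+\; \sum_{k \,:\, \out_f(v_k) = 2} \mu_k(P,f), \]
so the conjecture is equivalent to $\sum_{k \,:\, \out_f(v_k) = 2} \mu_k(P,f) \le F_{n+2} - 2 = F_2 + F_3 + \cdots + F_n$.

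Next I would pass to the fiber/inverse-limit picture of Section~\ref{sec:graph}. For a simple $3$-polytope every fiber $P(t)$ is a polygon, so the diagram~(\ref{eq:main-diagram}) is a diagram of cycles. Sweeping $t$ from $t_0$ to $t_{2n-1}$, the fiber polygon is born as a triangle at $v_{\min}$; at a vertex of outdegree $2$ one vertex of the polygon \emph{splits} into two adjacent vertices, at a vertex of outdegree $1$ two adjacent vertices \emph{merge} into one, and after the $n-1$ splits and $n-1$ merges the polygon is again a triangle, which collapses at $v_{\max}$. Weight each polygon vertex of $P(t)$, for $t \in (t_k, t_{k+1})$, by $\mu_j(P,f)$, where $v_j$ is the lower endpoint of the corresponding edge of $P$; then a split replaces a weight $w$ by two adjacent copies of $w$, a merge replaces two adjacent weights by their sum, the total weight at level $t \in (t_k,t_{k+1})$ equals $\eta_k(P,f)$ from the proof of Lemma~\ref{lem:numofpaths}, and $\mu(P,f)$ is the total weight of the final triangle, i.e. $3$ plus the sum over split steps of the weight of the vertex that was split. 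So the task becomes: bound the outcome of this split/merge process on cyclic weight sequences, started from $(1,1,1)$, over all sequences that are actually realized by some $(P,f)$.

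The hard part --- and the reason the statement is only conjectural --- is that this realizability condition is both essential and delicate. If the process were unconstrained (split any vertex, merge any adjacent pair), then alternating ``merge a pair, then split the result'' would produce weight $2 + 2^{n-1}$, which for $n \ge 4$ exceeds $F_{n+2}+1$ and grows exponentially faster; so the Fibonacci bound cannot be read off from the recursion alone. What actually constrains the process is polytopal: the edges of $P$ producing two polygon vertices at a split are the two edges leaving a common (degree-$3$) vertex, and two such edges can never later be the pair merged at an outdegree-$1$ vertex (that pair consists of two edges into a common vertex), and similar incidence restrictions cascade through the whole evolution. I would try to distill the precise restriction that forces slow growth and then prove, by induction on $n$ while processing the polygon evolution (equivalently, the outdegree-$2$ vertices in increasing $f$-order), the per-vertex estimate $\mu_u(P,f) \le F_{i+1}$ for the $i$-th vertex $u$ of outdegree $2$ --- or the right variant of it --- which sums to exactly $F_2 + \cdots + F_n$. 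The genuine obstacle is that even this per-vertex estimate appears to require controlling which earlier outdegree-$2$ vertices can feed, through at most one outdegree-$1$ vertex, into $u$; this is exactly the combinatorial content left open.

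Finally, for tightness I would verify that the wedge of an $(n+1)$-gon over one of its edges, oriented so that the source and sink are the endpoints of that edge and so that some $f$-monotone path visits every vertex, attains equality: one checks (by induction on $n$, or directly from the formula above) that its $n-1$ vertices of outdegree $2$ carry $\mu$-values $F_2, F_3, \dots, F_n$, so that $\mu(P,f) = 3 + (F_2 + \cdots + F_n) = F_{n+2}+1$. This matches the base cases $n=2$ (a simplex, $\mu=4$) and $n=3$ (a triangular prism, $\mu=6$).
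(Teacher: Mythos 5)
The statement you are trying to prove is stated in the paper as a \emph{conjecture} (Conjecture~\ref{conj:path-max-simple}) and is left open there; the paper's strongest proven result in this direction is Proposition~\ref{cor:path-max-simple}, which gives only the weaker bound $\mu(P,f)\le 2F_n$ by running the induction of Theorem~\ref{thm:path-max} with the crude input $d_0=3$, $d_k\le 2$ for $1\le k\le n-2$ and $d_{n-1}=1$. So there is no ``paper's own proof'' to compare against, and your proposal, as you yourself say, is not a proof: the key step is missing. That said, your preparatory reductions are correct and match the paper's machinery. The $h$-vector computation ($h_1=h_2=n-1$ for a simple $3$-polytope with $2n$ vertices), the specialization of Lemma~\ref{lem:numofpaths} to $\mu(P,f)=3+\sum_{k:\,d_k=2}\mu_k(P,f)$, the equivalence with $\sum_{k:\,d_k=2}\mu_k(P,f)\le F_{n+2}-2=F_2+\cdots+F_n$, and the split/merge weight dynamics on the fiber polygons (which is exactly the inverse-limit picture of Section~\ref{sec:graph}, with the weight of a fiber vertex equal to the number of partial monotone paths through it and the level totals equal to the $\eta_k$ from the proof of Lemma~\ref{lem:numofpaths}) are all sound. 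Your observation that the unconstrained split/merge process grows like $2^n$ correctly explains why the Fibonacci bound cannot follow from the recursion alone and why Proposition~\ref{cor:path-max-simple} stops at $2F_n$. The tightness check for the wedge of an $(n+1)$-gon over an edge (with source and sink at the endpoints of that edge) is consistent with the paper's claim and with the base cases $n=2,3$.

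The genuine gap is the one you name: you have not identified, let alone proved, the polytopal realizability constraint on the split/merge process that would force growth no faster than Fibonacci, and the proposed per-vertex estimate $\mu_u(P,f)\le F_{i+1}$ for the $i$-th outdegree-$2$ vertex is itself unproven (and may need modification, since controlling which earlier splits can feed into $u$ through merges is precisely the planarity/incidence bookkeeping that is open). Until that step is supplied, the argument establishes nothing beyond what Proposition~\ref{cor:path-max-simple} already gives. If you pursue this, a concrete first target would be to prove or refute the per-vertex bound on small cases (simple $3$-polytopes with $8$ and $10$ vertices), and to make precise the claim that the two fiber vertices created at a split can never be the pair merged at a later outdegree-$1$ vertex, together with whatever ``cascading'' version of this you need for vertices separated by several levels.
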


The argument in the proof of Theorem~\ref{thm:path-max} 
yields the following weaker result. 

\begin{proposition} \label{cor:path-max-simple}
We have $\mu(P, f) \leq 2F_n$ for every 3-dimensional 
simple polytope $P$ with $n+1$ vertices and every generic
linear functional $f$ on $P$. 
\end{proposition}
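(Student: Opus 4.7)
The plan is to mirror the inductive bookkeeping used in the proof of Theorem~\ref{thm:path-max}, but with the Tribonacci numbers replaced throughout by Fibonacci numbers. The key input supplied by the simple hypothesis is a degree constraint: since every vertex of $P$ has degree exactly three, when we label the vertices $v_0, v_1, \ldots, v_n$ in increasing $f$-order as in Section~\ref{sec:paths}, every vertex other than the sink has in-degree at most $2$ in $\omega(P,f)$, while the sink $v_n$ has in-degree exactly $3$.

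Set $p_k := \mu_k(P,f)$, the number of partial $f$-monotone paths from $v_0$ to $v_k$, so that $p_0 = 1$ and $\mu(P,f) = p_n$. I will prove by induction on $k$ that $p_k \le F_{k+1}$ for every $0 \le k \le n-1$, starting from the base case $p_0 = 1 = F_1$. For $k \ge 1$ the number $p_k$ equals the sum of the $p_j$ over all in-neighbors $v_j$ of $v_k$ in $\omega(P,f)$, and by the simplicity hypothesis this sum has at most two terms; thus the inductive step splits into two cases. If $v_k$ has a single in-neighbor $v_j$, then $p_k = p_j \le F_{j+1} \le F_{k+1}$. Otherwise $v_k$ has two in-neighbors $v_{j_1} < v_{j_2}$ with $j_2 \le k-1$ and $j_1 \le k-2$, hence $p_k \le F_{j_1+1} + F_{j_2+1} \le F_{k-1} + F_k = F_{k+1}$.

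For the final estimate, the sink $v_n$ has three in-neighbors $v_{j_1} < v_{j_2} < v_{j_3}$ with $j_3 \le n-1$, $j_2 \le n-2$ and $j_1 \le n-3$; monotonicity of $(F_k)$ together with the inductive bound then gives
\[ \mu(P,f) \, = \, p_n \, = \, p_{j_1} + p_{j_2} + p_{j_3} \, \le \, F_{n-2} + F_{n-1} + F_n \, = \, F_n + F_n \, = \, 2F_n. \]
There is no real obstacle in this approach. The conceptual point is that simplicity caps all intermediate in-degrees at $2$, so the Tribonacci recursion of Theorem~\ref{thm:path-max} collapses into the Fibonacci one, while the in-degree $3$ at the single sink $v_n$ is precisely what turns $F_n$ into $2F_n$.
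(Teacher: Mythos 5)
Your argument is correct. The degree bookkeeping is right: in a simple $3$-polytope every vertex is trivalent, and since $v_n$ is the only vertex of out-degree $0$ and $v_0$ the only one of in-degree $0$ in $\omega(P,f)$, every vertex other than the sink indeed has in-degree at most $2$ while the sink has in-degree exactly $3$. The recursion $p_k=\sum_j p_j$ over in-neighbors is the standard path count in a DAG, the induction $p_k\le F_{k+1}$ goes through in both cases (note the two-in-neighbor case only occurs for $k\ge 2$, so all Fibonacci indices are legitimate), and the final step $F_{n-2}+F_{n-1}+F_n=2F_n$ is valid for all $n\ge 3$, which covers every $3$-polytope.

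Your route is genuinely different from the paper's. The paper reuses Lemma~\ref{lem:numofpaths}, i.e.\ the identity $\mu(P,f)=1+\sum_k(d_k-1)\mu_k(P,f)$ in terms of \emph{out}-degrees, introduces the auxiliary sequence $a_0=a_1=1$, $a_2=2$, $a_3=4$, $a_k=a_{k-1}+a_{k-2}$ with $a_k=2F_k$, and mimics the induction of Theorem~\ref{thm:path-max} using the constraints $d_0=3$, $d_1,\dots,d_{n-2}\le 2$, $d_{n-1}=1$. You instead work with \emph{in}-degrees and prove the sharper per-vertex estimate $\mu_k(P,f)\le F_{k+1}$ by induction on the index $k$ within the fixed polytope, extracting the factor $2$ only at the sink. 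What your approach buys is self-containedness: it bypasses Lemma~\ref{lem:numofpaths} entirely and, unlike an induction on the number of vertices of the polytope, never needs to worry about whether the convex hull of an initial segment $v_0,\dots,v_k$ of a simple polytope is again simple. What the paper's approach buys is uniformity with the Tribonacci argument of Theorem~\ref{thm:path-max}, where the same machinery handles the non-simple case. Either proof is complete.
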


\begin{proof}
Let $(a_n)$ be the 
sequence of numbers defined by the recurrence relation 
$a_0 = a_1 = 1$, $a_2 = 2$, $a_3 = 4$ and $a_n = a_{n-1} 
+ a_{n-2}$ for $n \ge 4$. Note that $a_n = 2F_n$ for $n \geq 2$. We mimick the proof of Theorem~\ref{thm:path-max} to show that $\mu(P, f) \leq a_n$ for all $n \ge 3$. For the 
inductive step, since $P$ is simple, we have $d_0 = 3$, 
$d_1,d_2,\dots,d_{n-2} \le 2$ and $d_{n-1}= 1$ and compute
that 
\begin{align*}
\mu(P, f) & \ \le \ 1 + \sum_{k=0}^{n-1} d_k a_k - 
             \sum_{k=0}^{n-1} a_k \ \le \ 1 + a_{n-2} + 
						 a_{n-3} + \ldots + a_1 + 2a_0 \\ 
& \ \le \ a_{n-1} + a_{n-2} \ = \ a_n,
\end{align*}
since $a_{n-1} = 1 + a_{n-3} + \cdots + a_1 + 2a_0$.
\end{proof}

\section{On the diameter of monotone path graphs}
\label{sec:diameter}

The main goal of this section is to prove 
Theorem~\ref{thm:diam-max}. 

The lower bound 
of~(\ref{eq:diam-max}) for the maximum diameter follows from 
Lemma~\ref{lem:diam-stack}. The upper bound will be deduced from 
the following result. Clearly, given a polytope $P$ and a generic 
linear functional $f$, every $f$-monotone path on $P$ meets each 
of the fibers $f^{-1}(t) \cap P$, where $t \in f(P)$, in a unique 
point. For $f$-monotone paths $\gamma$ and $\gamma'$ on $P$, 
let us denote by $\nu(\gamma, \gamma')$ the number of connected 
components of the set of values $t \in f(P)$ for which $\gamma$ 
and $\gamma'$ disagree on $f^{-1}(t) \cap P$. For example, 
for the two monotone paths, say $\gamma$ and $\gamma'$, shown 
on Figure~\ref{colored-paths} we have $\nu(\gamma, \gamma') = 4$. 
Note that $\nu(\gamma, \gamma') = 0 \Leftrightarrow 
\gamma = \gamma'$.

\begin{theorem} \label{thm:upper-dist}
Let $P$ be a 3-dimensional polytope and $f$ be a generic linear
functional on $P$. The distance between any two $f$-monotone 
paths $\gamma$ and $\gamma'$ in the graph $G = G(P,f)$ satisfies 
\begin{equation}
\label{eq:upper:dist}
d_G (\gamma,\gamma') \le \frac{\nu(\gamma, \gamma')}{2} 
\cdot f_2(P),
\end{equation}
where $f_2(P)$ is the number of 2-dimensional faces of $P$.
\end{theorem}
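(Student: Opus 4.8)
The plan is to bound the distance between two monotone paths by a "divide and conquer" induction on the number $\nu(\gamma,\gamma')$ of components on which they disagree. First I would set up the base case $\nu(\gamma,\gamma')=1$: here $\gamma$ and $\gamma'$ agree outside a single subinterval $[t_a,t_b]$ of $f(P)$, and inside that interval they form the two sides of a (topological) region in $\partial P$ bounded by edges. I want to show that in this case $d_G(\gamma,\gamma')\le f_2(P)/2$ — in fact no more than the number of $2$-faces strictly between the two arcs. The cleanest route is to exploit the inverse-limit description of $G(P,f)$ from Section~\ref{sec:graph}: restricting to the subdiagram from the fiber just above $t_a$ to the fiber just below $t_b$, the two paths differ only in this "bubble," and a shelling-type argument on the $2$-faces enclosed by $\gamma$ and $\gamma'$ lets one remove the faces one at a time by polygon flips. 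Concretely, pick a $2$-face $F$ on the boundary of the enclosed region that is incident to $\gamma$; flipping $\gamma$ across $F$ strictly decreases the number of enclosed $2$-faces by one, and keeps $\nu=1$ (or drops it to $0$). Since each $2$-face of $P$ can be enclosed by at most one of the two arcs at a time, and each flip peels off one face, the number of flips needed is at most the number of enclosed $2$-faces, which is at most $f_2(P)$; the extra factor of $1/2$ comes from the fact that a bubble "between" two paths accounts, roughly, for half the faces it touches — more carefully, one counts that the two arcs together can enclose at most $f_2(P)$ faces across all components, so a single component encloses correspondingly fewer.

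Next I would handle the inductive step. Given $\gamma,\gamma'$ with $\nu(\gamma,\gamma')=\nu\ge 2$, choose a value $t^\ast\in f(P)$ lying in one of the intervals where $\gamma$ and $\gamma'$ agree and separating the disagreement components into two nonempty groups — say $\nu_1$ components to the left of $t^\ast$ and $\nu_2=\nu-\nu_1$ to the right. Let $\gamma''$ be the monotone path that agrees with $\gamma$ on $f^{-1}((-\infty,t^\ast])\cap P$ and with $\gamma'$ on $f^{-1}([t^\ast,\infty))\cap P$; this is a well-defined monotone path precisely because $\gamma$ and $\gamma'$ pass through the same point of the fiber over $t^\ast$. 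Then $\nu(\gamma,\gamma'')\le\nu_1$ and $\nu(\gamma'',\gamma')\le\nu_2$, and crucially the $2$-faces enclosed in the left half are disjoint from those enclosed in the right half, so the face budgets add rather than overlap. By the induction hypothesis (or the base case when $\nu_i=1$), $d_G(\gamma,\gamma'')\le\frac{\nu_1}{2}f_2(P)$ and $d_G(\gamma'',\gamma')\le\frac{\nu_2}{2}f_2(P)$, and the triangle inequality in $G$ gives $d_G(\gamma,\gamma')\le\frac{\nu}{2}f_2(P)$, completing the induction.

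The main obstacle I anticipate is the base case, and specifically making the factor of $\tfrac12$ honest. It is easy to prove a bound of the form $d_G(\gamma,\gamma')\le f_2(P)$ per component — just flip off enclosed faces one by one — but getting $\tfrac12 f_2(P)$ requires a genuine accounting argument: one must argue that when $\nu=1$, the single enclosed region uses at most half of the $2$-faces of $P$, or more robustly, that summed over all $\nu$ components the total number of distinct enclosed faces is at most $f_2(P)$ (each $2$-face lies in at most one enclosed region because $\gamma$ and $\gamma'$ are simple paths on $\partial P$ that together bound these regions, and a face interior to the region between them cannot simultaneously be interior to another such region). Once that disjointness-of-enclosed-faces statement is established, the induction automatically distributes the single budget $f_2(P)$ across the $\nu$ components with the stated weights, so the real content is a careful topological description of the regions of $\partial P\setminus(\gamma\cup\gamma')$ and the verification that each polygon flip removes exactly one face from exactly one region without disturbing the others. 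I would also need to double-check the boundary behavior near $v_{\min}$ and $v_{\max}$ and near the endpoints of each disagreement interval, where the "region" degenerates, but these are routine once the framework is in place.
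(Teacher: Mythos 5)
Your reduction to the case $\nu(\gamma,\gamma')=1$ --- splicing $\gamma$ and $\gamma'$ at a common point of a fiber over an agreement interval and applying the triangle inequality --- is exactly the paper's first step and is fine. The gap is in the base case, and it is precisely the point you flag as your ``main obstacle'': you never actually obtain the factor $\tfrac12$. When $\nu=1$, the two disagreeing arcs form a simple closed curve $C$ on the $2$-sphere $\partial P$, which separates it into \emph{two} complementary closed disks $B^+$ and $B^-$. There is no canonical ``region between the arcs,'' and whichever disk you declare to be enclosed may contain far more than half of the $2$-faces of $P$ (e.g.\ when the two arcs are short and close together, one disk is tiny and the other carries almost everything). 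So your proposed accounting --- ``the single enclosed region uses at most half of the $2$-faces'' --- is false for an arbitrary choice of side. The paper's resolution, which your argument lacks, is that a flip walk from $\gamma$ to $\gamma'$ can be routed through \emph{either} disk, traversing each of its $2$-faces exactly once; since every $2$-face lies in exactly one of the two disks, $f_2(\fF^+)+f_2(\fF^-)=f_2(P)$, and the cheaper of the two routes costs at most $f_2(P)/2$.

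Establishing that \emph{both} routings exist is itself the technical heart of the proof and is where Proposition~\ref{prop:polygon-path} and the inverse-limit description of $G(P,f)$ are needed. Your ``peel off a boundary face incident to $\gamma$'' shelling step implicitly assumes the region is sandwiched between the two arcs, so that each fiber $f^{-1}(t)\cap B^\varepsilon$ is a segment; but for the disk containing $v_{\min}$ and/or $v_{\max}$ these fibers are circles, and one must orient them consistently to obtain compatible walks on the fibers before patching them into a walk in $G(P,f)$ (the paper's Cases 2 and 3). Moreover, a $2$-face merely sharing an edge with $\gamma$ cannot in general be flipped: $\gamma$ must contain one of the two full monotone arcs of that face, so even the existence of a face to peel at each step, and the claim that a flip decreases the count of enclosed faces by exactly one, require justification. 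None of this makes the approach unsalvageable, but as written the proposal does not prove the stated inequality.
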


\begin{remark}\rm 
Theorem \ref{thm:upper-dist} gives a diameter bound for all three-dimensional polytopes. Cordovil and Moreira had studied bounds for three-dimensional zonotopes and rank-three oriented matroids \cite{cordovil+moreira}, which they gave in terms of the dual pseudo-line arrangements. 
\end{remark}

We will first state a technical result (see 
Proposition~\ref{prop:polygon-path}) which constructs a 
walk in $G(P,f)$ between two monotone paths $\gamma$ and 
$\gamma'$ with the required properties from walks on the 
fibers, assuming that the latter satisfy certain necessary 
compatibility conditions. To allow for all possible ways
that $\gamma$ and $\gamma'$ may intersect each other, we 
consider the following general situation.
Let $\fF$ be a connected polygonal
complex in $\RR^d$ having vertices $v_0, v_1,\dots,v_n$ and 
$f: \RR^d \to \RR$ be a linear functional such that $f(v_0) 
< f(v_1) < \cdots < f(v_n)$. The graph of $f$-monotone paths 
on $\fF$, denoted by $G(\fF, f)$, having initial vertex $v_0$ 
and terminal vertex $v_n$, can be defined with adjacency 
given by polygon flips just as in the special case in which 
$\fF$ is the 2-skeleton of a convex polytope (see 
Section~\ref{sec:graph}). Alternatively, and in order to 
relate it to the graphs of the fibers of $f$, we may view 
$G(\fF,f)$ as the inverse limit associated to a diagram

\begin{equation}
\label{eq:main-diagram2}
G_{0,1} \ \overset{\alpha_1}{\toto} \ G_1 \ 
\overset{\beta_1}{\getto} \ G_{1,2} \ \overset{\alpha_2}{\toto} 
\ G_2 \ \overset{\beta_2}{\getto} \ \cdots \ 
\overset{\beta_{n-2}}{\getto} \ G_{n-2,n-1} \   
\overset{\alpha_{n-1}}{\toto} \ G_{n-1} \ \overset{\beta_{n-1}}
{\getto} \ G_{n-1,n}
\end{equation}
 
\bigskip
\noindent
of graphs and simplicial maps. This is defined as in 
Section~\ref{sec:graph} provided the fiber $f^{-1}(t) \cap P$ 
is replaced with $f^{-1}(t) \cap \|\fF\|$, where $\|\fF\|$ is 
the polyhedron (union of faces) of $\fF$. Thus, the $G_i$ and 
$G_{i,i+1}$ are graphs of (one-dimensional) fibers $f^{-1}(t) 
\cap \|\fF\|$ and the $\alpha_i$ and $\beta_i$ are natural 
degeneration maps.

Given an $f$-monotone path $\gamma$ on $\fF$ and $i \in [n]$,
let us denote by $\pi_i(\gamma)$ the node of $G_{i-1,i}$ in 
which the union of the edges of $\gamma$ intersects the 
corresponding fiber $f^{-1}(t) \cap \|\fF\|$. Then, $\pi_i: 
G(\fF, f) \to G_{i-1,i}$ is a simplicial map. Given a walk 
$\pP$ in a graph $G$, thought of as a sequence of edges, and 
a simplicial map $\varphi: G \to H$ of graphs, let us denote 
by $\varphi(\pP)$ the walk in $H$ which is formed by the images 
of the edges of $\pP$ under $\varphi$, disregarding those edges 
of $\pP$ which are contracted to a node by $\varphi$.

\begin{proposition} \label{prop:polygon-path}
Let $\gamma$ and $\delta$ be $f$-monotone paths on $\fF$.
Suppose that for every $i \in [n]$ there exists a walk $\pP_i$ 
in $G_{i-1,i}$ with initial node $\pi_i(\gamma)$ and terminal
node $\pi_i(\delta)$ which traverses each edge in $G_{i-1,i}$
exactly once, so that 
\begin{equation} \label{eq:Pcondition}
\alpha_i(\pP_i) = \beta_i(\pP_{i+1})
\end{equation} 
for every $i \in [n-1]$. Then, there exists a walk $\pP$ in 
$G(\fF, f)$ with initial node $\gamma$ and terminal node 
$\delta$ which traverses each 2-dimensional face of $\fF$ 
exactly once, such that $\pi_i(\pP) = \pP_i$ for every 
$i \in [n]$. 
\end{proposition}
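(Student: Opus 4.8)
The plan is to build the walk $\pP$ in $G(\fF,f)$ directly from the data of the $\pP_i$ by "gluing" them along the compatibility condition~(\ref{eq:Pcondition}), using the inverse-limit description of $G(\fF,f)$ from Section~\ref{sec:graph}. Recall that a node of $G(\fF,f)$ is a compatible sequence $(w_{0,1},w_{1,2},\dots,w_{n-1,n})$ with $w_{i-1,i}$ a vertex of $G_{i-1,i}$ and $\alpha_i(w_{i-1,i})=\beta_i(w_{i,i+1})$, and that $\pi_i$ just reads off the $i$-th coordinate. So I want to produce a sequence of such compatible tuples, starting at $(\pi_1(\gamma),\dots,\pi_n(\gamma))$ and ending at $(\pi_1(\delta),\dots,\pi_n(\delta))$, in which consecutive tuples are adjacent in the inverse-limit sense (i.e.\ differ by a polygon flip), and in which coordinate $i$ of the sequence, with repetitions deleted, traces out exactly the walk $\pP_i$.

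First I would set up the bookkeeping: for each $i$, write $\pP_i$ as an edge sequence $e^i_1,e^i_2,\dots,e^i_{m_i}$ visiting nodes $\pi_i(\gamma)=p^i_0,p^i_1,\dots,p^i_{m_i}=\pi_i(\delta)$, where $m_i = |E(G_{i-1,i})|$ is the number of edges of the $i$-th fiber graph and $\sum_i m_i = f_2(\fF)$ is the number of $2$-faces of $\fF$ (each $2$-face of $\fF$ contributes exactly one edge to exactly one fiber graph $G_{i-1,i}$, namely the one at a generic level between its extreme vertices). Next I would use the compatibility~(\ref{eq:Pcondition}) to synchronize these traversals: since $\alpha_i(\pP_i)=\beta_i(\pP_{i+1})$ as walks in $G_i$, the non-contracted edges of $\pP_i$ and those of $\pP_{i+1}$ are matched up in the same order, with the contracted edges (those lying in a $2$-face on one side of level $t_i$ but collapsed at $t_i$) interleaved arbitrarily but only affecting coordinate $i$ or $i+1$ respectively. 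This is exactly the structure needed so that a single "master" sequence of flips can be read off: at each step we advance along $\pP_i$ for some $i$ by one edge, which corresponds to flipping the $f$-monotone path across the unique $2$-face of $\fF$ recorded by that edge; the compatibility guarantees that after the flip the resulting tuple is still a legitimate node of the inverse limit (the adjacent coordinates $i\pm1$ either are unchanged, when the edge is contracted by $\alpha_i$ or $\beta_i$, or must simultaneously advance, when it is not — and the matching of walks says the latter advance is exactly the next edge of $\pP_{i\mp1}$). So I would organize the flips face by face in the order dictated by, say, traversing the fibers from bottom to top and, within each $G_{i-1,i}$, following $\pP_i$, while whenever an edge is shared (non-contracted) between two fibers performing the paired flips together; a short induction on $\sum_i m_i$ then shows the whole process terminates at $\delta$, uses each $2$-face exactly once, and satisfies $\pi_i(\pP)=\pP_i$.

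The main obstacle, and the part that needs genuine care rather than bookkeeping, is verifying that the flips can actually be \emph{linearly ordered} so that every intermediate tuple is a valid node of $G(\fF,f)$ — that is, that the "must simultaneously advance" constraints arising from non-contracted edges never create a deadlock. The key observation that resolves this is that $\fF$ is only a $3$-dimensional polygonal complex (equivalently its fibers are $1$-dimensional), so each fiber graph $G_{i-1,i}$ is itself just a graph, $\alpha_i$ and $\beta_i$ map it to $G_i$, and the adjacency in the inverse limit involves an \emph{interval} of coordinates; but in the present setting the coupling between consecutive fibers is rigid enough — a non-contracted edge of $G_{i-1,i}$ maps to a genuine edge of $G_i$ and hence forces exactly the matching edge of $G_{i,i+1}$ under $\beta_i$ — that the interleaving is essentially forced and the shared-edge flips can always be grouped into legal "interval" moves. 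Thus I would prove by induction on $n$ (peeling off the top fiber $G_{n-1,n}$, using the inductive hypothesis on the truncated diagram, and then splicing in $\pP_n$ via its compatibility with $\pP_{n-1}$ through $\alpha_{n-1}$, $\beta_{n-1}$) that the required walk $\pP$ exists, each inductive step amounting to checking that the interleaving of the already-constructed walk on the first $n-1$ fibers with the edge sequence $\pP_n$ respects the single compatibility relation $\alpha_{n-1}(\pP_{n-1})=\beta_{n-1}(\pP_n)$, which is precisely the hypothesis.
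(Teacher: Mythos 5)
Your proposal is correct and follows essentially the same route as the paper: both build $\pP$ by inductively merging the walk on a truncated diagram with the walk on the next fiber, using the compatibility $\alpha_i(\pP_i)=\beta_i(\pP_{i+1})$ to synchronize the non-contracted edges (the paper makes the merge explicit via the rule ``advance the left factor if possible, else the right, else both''). One slip in your bookkeeping: it is false that each $2$-face contributes an edge to exactly one fiber graph, so $\sum_i m_i \neq f_2(\fF)$ in general --- a face whose extreme vertices are $v_a$ and $v_b$ contributes an edge to every $G_{i-1,i}$ with $a < i \le b$; your later discussion of simultaneous advancement across an interval of coordinates handles this correctly, so the error does not affect the argument, but the termination/counting statement should be phrased in terms of the total edge count of the fibers rather than $f_2(\fF)$.
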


We first illustrate the proposition with an important 
special case and then use it to prove 
Theorem~\ref{thm:upper-dist}.

\begin{example} \label{ex:disjoint-paths} \rm
To motivate the proof of Theorem \ref{thm:upper-dist}, 
consider the special case in 
which the monotone paths $\gamma$ and $\gamma'$ do not 
have common vertices, other than those on which $f$ attains its 
minimum and maximum value on $P$. Then, $\nu(\gamma, \gamma') = 1$ 
and the edges of $\gamma$ and $\gamma'$ form a simple cycle $C$ which 
divides the boundary of $P$ into two closed balls, say $B^+$ 
and $B^-$, having common boundary $C$. Let $\fF^+$ 
and $\fF^-$ be the two subcomplexes of the boundary complex 
of $P$ which correspond to these balls. We wish to show that 
for each $\varepsilon \in \{+, -\}$, there exists a walk 
in $G(P,f)$ joining $\gamma$ and $\gamma'$ which traverses each 
2-dimensional face of $\fF^\varepsilon$ exactly once. This 
would imply the desired bound for $d_G (\gamma,\gamma')$. 
Such a walk must traverse every edge of each fiber 
$f^{-1}(t) \cap B^\varepsilon$ exactly once and thus induce 
walks on these fibers with the same property. 

Let us consider the diagram (\ref{eq:main-diagram2}) for the 
polygonal complex $\fF^\varepsilon$. Clearly, the fiber $f^{-1}(t) \cap 
\partial P$ is the boundary of a polygon for every interior point $t$ 
of the interval $f(P)$, where $\partial P$ 
denotes the boundary of $P$. Since, by the $f$-monotonicity 
of $\gamma$ and $\gamma'$, this fiber intersects the cycle 
$C$, which is the boundary of the ball $B^\varepsilon$, in 
exactly two points, its intersection with $B^\varepsilon$ 
must be homeomorphic to a line segment. Thus, all 
graphs appearing in the diagram (\ref{eq:main-diagram2}) 
for $\fF^\varepsilon$ are path graphs, where $G_{i-1,i}$ 
has endpoints $\pi_i(\gamma)$ and $\pi_i(\gamma')$ for every 
$i \in [n]$. As a result, there are unique walks $\pP_i$, as 
in the statement of 
Proposition~\ref{prop:polygon-path}, where condition
(\ref{eq:Pcondition}) holds by the degeneration of fibers. Thus, 
Proposition~\ref{prop:polygon-path} implies the existence 
of a walk in $G(\fF^\varepsilon,f)$ with initial node 
$\gamma$ and terminal node $\gamma'$ which traverses each 
2-dimensional face of $\fF^\varepsilon$ exactly once.
\qed
\end{example}

\medskip
\noindent
\textit{Proof of Theorem~\ref{thm:upper-dist}.}
We first observe that it suffices to prove the special case 
$\nu(\gamma, \gamma') = 1$. Indeed, given $f$-monotone paths 
$\gamma$ and $\gamma'$ on $P$ and setting $\nu = \nu(\gamma, 
\gamma')$, it is straightforward to define $f$-monotone paths 
$\gamma = \gamma_0$, $\gamma_1,\dots,\gamma_\nu = \gamma'$ on 
$P$ satisfying $\nu(\gamma_{i-1}, \gamma_i) = 1$ for every 
$i \in [\nu-1]$. Then, the triangle inequality and the special 
case imply that
\[ d_G(\gamma, \gamma') \le \sum_{i=1}^\nu d_G (\gamma_{i-1}, 
   \gamma_i) \le \nu \cdot \frac{f_2(P)}{2}, \]
as claimed by (\ref{eq:upper:dist}).

So let $\gamma, \gamma'$ be $f$-monotone paths on $P$ 
such that $\nu(\gamma, \gamma') = 1$. Let $u$ and $v$ 
be their unique common vertices, satisfying $f(u) < f(v)$, 
for which $\gamma$ and $\gamma'$ disagree on each fiber 
$f^{-1}(t) \cap P$ with $f(u) < t < f(v)$ and agree on 
the other fibers; in the special
case of Example~\ref{ex:disjoint-paths}, $u$ and $v$ 
are the unique vertices $v_{\min}$ and $v_{\max}$ on which 
$f$ attains its minimum and maximum value on $P$, 
respectively. As in that special case, the edges 
of $\gamma$ and $\gamma'$ joining $u$ and $v$ form a simple 
cycle $C$ which divides the 2-dimensional sphere $\partial P$ 
into two closed 2-dimensional balls $B^+$ and $B^-$ having
common boundary $C$. Moreover, the $f$-monotonicity 
of $\gamma$ and $\gamma'$ implies that for each $\varepsilon 
\in \{+, -\}$ and every interior point $t$ of the interval 
$f(B^\varepsilon)$, the fiber $f^{-1}(t) \cap B^\varepsilon$ 
is homeomorphic to a line segment or a circle.
We wish to apply Proposition~\ref{prop:polygon-path} to the 
subcomplex $\fF^\varepsilon$ of the boundary complex of $P$ 
which corresponds to $B^\varepsilon$.

We claim that there 
exist unique walks $\pP_i$ satisfying the assumptions of the
proposition. Indeed, according to our previous discussion, 
every graph $G_{i-1,i}$ appearing in the diagram 
(\ref{eq:main-diagram2}) for $\fF^\varepsilon$ is either a 
path graph, with endpoints $\pi_i(\gamma)$ and $\pi_i(\gamma')$, 
or a cycle. As a result, there exists a unique walk $\pP_i$ 
in $G_{i-1,i}$ with initial node $\pi_i(\gamma)$ and terminal 
node $\pi_i(\gamma')$ which traverses each edge in $G_{i-1,i}$ 
exactly once, if the latter is a path graph, and exactly two 
such walks, corresponding to the two possible orientations
of $G_{i-1,i}$, if the latter is a cycle. There are the 
following cases, illustrated in Example~\ref{ex:cycle-fibers}, 
to consider:

\medskip
\noindent
\textbf{Case 1:} The relative interior of $B^\varepsilon$
contains neither $v_{\min}$ nor $v_{\max}$. Then, all the 
$G_{i-1,i}$ are path graphs and conditions 
(\ref{eq:Pcondition}) hold by degeneration of fibers, as in 
the special case $u = v_{\min}$ and $v = v_{\max}$ of 
Example~\ref{ex:disjoint-paths}. 

\noindent
\textbf{Case 2:} The relative interior of $B^\varepsilon$
contains exactly one of $v_{\min}$ and $v_{\max}$, say 
$v_{\min}$. Then, the $G_{i-1,i}$ associated to fibers 
$f^{-1}(t) \cap B^\varepsilon$ with $t < f(u)$ are cycles 
and all others are path graphs which degenerate to cycles
as the value of $f$ approaches $f(u)$. Clearly, 
the cycles can be uniquely oriented, so that the resulting 
walks $\pP_i$ satisfy conditions (\ref{eq:Pcondition}). 

\noindent
\textbf{Case 3:} The relative interior of $B^\varepsilon$
contains both $v_{\min}$ and $v_{\max}$. Then, the $G_{i-1,i}$ 
associated to fibers $f^{-1}(t) \cap B^\varepsilon$ with 
$f(u) < t < f(v)$ are path graphs and the rest are cycles 
which can be uniquely oriented, so that the resulting walks 
$\pP_i$ satisfy conditions (\ref{eq:Pcondition}).

\medskip
Thus, Proposition~\ref{prop:polygon-path} applies in all 
cases and we may conclude that $d_G(\gamma, \gamma') \le f_2
(\fF^\varepsilon)$ for each $\varepsilon \in \{+, -\}$. 
Hence,
\[ d_G(\gamma, \gamma') \le \frac{f_2(\fF^+) + f_2(\fF^-)}{2} 
   = f_2(P)/2 \]
and the proof follows.
\qed

\begin{example} \label{ex:cycle-fibers}
\rm Let $P = X(10)$ be the stacked polytope shown in 
Figure~\ref{Xtacked}. The following two situations 
illustrate the three cases within the proof 
of Theorem \ref{thm:upper-dist}.

\medskip
\noindent
(a) Consider the $f$-monotone paths on $P$
\begin{align*}
\gamma \, & \, = \, (v_1, v_3, v_6, v_9, v_{10}), \\
\gamma' \, & \, = \, (v_1, v_3, v_5, v_8, 
                                   v_9, v_{10}),
\end{align*}
presented as sequences of vertices. Then, the cycle $C$
has edges with vertex sets $\{v_3, v_5\}$, $\{v_5, v_8\}$, 
$\{v_8, v_9\}$, $\{v_6, v_9\}$ and $\{v_3, v_6\}$, and 
one of the $\fF^\varepsilon$ consists of the faces of 
the facets of $P$ with vertex sets $\{v_3, v_5, v_6\}$, 
$\{v_5, v_6, v_8\}$ and $\{v_6, v_8, v_9\}$ and falls in 
the first case of the proof, while the other consists of 
the faces of the remaining thirteen facets of $P$ and 
falls in the third case. Three flips are needed to reach 
$\gamma'$ from $\gamma$ across $\fF^\varepsilon$ in
the former case, and thirteen flips in the latter.

\noindent
(b) Consider also the $f$-monotone paths
\begin{align*}
\gamma \, & \, = \, (v_1, v_3, v_6, v_9, v_{10}), \\
\gamma'' \, & \, = \, (v_1, v_3, v_4, v_5, v_8, v_9, v_{10}).
\end{align*}
Now $C$ has six edges with vertex sets $\{v_3, v_4\}$, 
$\{v_4, v_5\}$, $\{v_5, v_8\}$, $\{v_8, v_9\}$, $\{v_6, v_9\}$ 
and $\{v_3, v_6\}$, and one of the $\fF^\varepsilon$ consists of 
the faces of the facets of $P$ with vertex sets $\{v_1, v_2, v_3\}$, 
$\{v_1, v_2, v_4\}$, $\{v_1, v_3, v_4\}$, $\{v_2, v_3, v_5\}$, 
$\{v_2, v_4, v_5\}$, $\{v_3, v_5, v_6\}$, $\{v_5, v_6, v_8\}$ 
and $\{v_6, v_8, v_9\}$, while the other consists of the 
faces of the remaining eight facets of $P$. Both fall in
the second case of the proof. The fibers $f^{-1}(t) \cap 
B^\varepsilon$ are path graphs for $f(v_3) < t < f(v_9)$ in
either case, and cycles for $t \le f(v_3)$ or $t \ge f(v_9)$ in
the two cases, respectively. 
\qed
\end{example}

\medskip
\noindent
\textit{Proof of Theorem~\ref{thm:diam-max}.}
As we have already mentioned, the lower bound 
of~(\ref{eq:diam-max}) follows from 
Lemma~\ref{lem:diam-stack}. The upper bound follows from 
Theorem~\ref{thm:upper-dist} and the obvious inequalities 
$\nu(\gamma, \gamma') \le \lfloor (n-1)/2\rfloor$ and 
$f_2(P) \le 2n-4$.
\qed

\begin{question} \label{que:lower-diam-dim3}
What is the exact value of the maximum diameter in 
Theorem~\ref{thm:diam-max}? In particular, is it equal 
to the lower bound given there for every $n$?
\end{question}

\medskip
\noindent
\textit{Proof of Proposition~\ref{prop:polygon-path}.}
Consider indices $0 < k \le m \le \ell < n$ and denote by $K$ 
and $L$ the graphs of partial $f$-monotone paths on $\fF$ which 
arise as inverse limits of the subdiagrams 

\begin{equation}
\label{eq:K-diagram}
G_{k-1,k} \ \overset{\alpha_k}{\toto} \ G_k \ 
\overset{\beta_k}{\getto} \ G_{k,k+1} \ 
\overset{\alpha_{k+1}}{\toto} \ \cdots \ 
\overset{\alpha_{m-1}}{\toto} \ G_{m-1} \ \overset{\beta_{m-1}}
{\getto} \ G_{m-1,m}
\end{equation}
and
\begin{equation}
\label{eq:L-diagram}
G_{m,m+1} \ \overset{\alpha_{m+1}}{\toto} \ G_{m+1} \ 
\overset{\beta_{m+1}}{\getto} \ \cdots \ 
\overset{\beta_{\ell-1}}{\getto} \ G_{\ell-1,\ell} \ 
\overset{\alpha_\ell}{\toto} \ G_\ell \ \overset{\beta_{\ell}}
{\getto} \ G_{\ell,\ell+1}
\end{equation}

\bigskip
\noindent
of~(\ref{eq:main-diagram2}), respectively. Let us call a 
\emph{polygon} any 2-dimensional face of $\fF$ which intersects 
the fiber $f^{-1}(t) \cap \|\fF\|$ for some $t_{k-1} < t < t_m$
in the case of~(\ref{eq:K-diagram}) and any 2-dimensional face 
of $\fF$ which intersects the fiber $f^{-1}(t) \cap \|\fF\|$ for 
some $t_m < t < t_{\ell+1}$ in the case of~(\ref{eq:L-diagram}).
Thus, the polygons are exactly the 2-dimensional faces of $\fF$
in the case of the entire diagram~(\ref{eq:main-diagram2}) and 
are in one-to-one correspondence with the edges of $G_{m-1,m}$
in the special case $k=m$ of~(\ref{eq:K-diagram}). 
Define similarly the graph $H$ of partial $f$-monotone paths on 
$\fF$ and its polygons from the subdiagram 

\begin{equation}
\label{eq:H-diagram}
G_{k-1,k} \ \overset{\alpha_k}{\toto} \ \cdots \ 
\overset{\beta_{m-1}}{\getto} \ G_{m-1,m} 
\overset{\alpha_m}{\toto} \ G_m \ \overset{\beta_m}{\getto} \ 
G_{m,m+1} \ \overset{\alpha_{m+1}}{\toto} \ \cdots \ 
\overset{\beta_{\ell}}{\getto} \ G_{\ell,\ell+1}
\end{equation}

\bigskip
\noindent
of~(\ref{eq:main-diagram2}) and note that there are natural 
restriction maps $\pi_K: G(\fF, f) \to K$, $\pi_L: G(\fF, f) 
\to L$ and $\pi_H: G(\fF, f) \to H$. 

We assume that there exist 
a walk $\qQ$ in $K$ with initial node $\pi_K(\gamma)$ and 
terminal node $\pi_K(\delta)$ which traverses each polygon 
of~(\ref{eq:K-diagram}) exactly once and a walk $\rR$ in $L$ 
with initial node $\pi_L(\gamma)$ and terminal node $\pi_L
(\delta)$ which traverses each polygon of~(\ref{eq:L-diagram}) 
exactly once, such that $\pi_i(\qQ) = \pP_i$ for $k \le i \le 
m$ and $\pi_i(\rR) = \pP_i$ for $m < i \le \ell+1$. As a consequence, 
there exists a walk $\pP$ in $H$ with initial node 
$\pi_H(\gamma)$ and terminal node $\pi_H(\delta)$ which 
traverses each polygon of~(\ref{eq:H-diagram}) exactly once,
such that $\pi_i(\pP) = \pP_i$ for $k \le i \le \ell+1$.
The proposition then follows by applying the claim several 
times, for instance when $k=1$ and $m = \ell$, for $m \in 
[n-1]$.

To prove the claim, we only need to patch $\qQ$ and $\rR$ 
along the walk $\alpha_m(\pP_m) = \beta_m(\pP_{m+1})$ in 
$G_m$. Any two nodes $\zeta$ of $K$ and $\eta$ of $L$ 
produce by concatenation a node $\zeta \ast \eta$ of $H$, 
provided that the terminal edge of $\zeta$ and the initial 
edge of $\eta$ have equal images under $\alpha_m$ and 
$\beta_m$, respectively. Let $\zeta_0,
\zeta_1,\dots,\zeta_q$ be the successive nodes of $\qQ$ and 
$\eta_0, \eta_1,\dots,\eta_r$ be the successive nodes of 
$\rR$. By our assumptions, we have $\zeta_0 \ast \eta_0 =
\pi_K(\gamma) \ast \pi_L(\gamma) = \pi_H(\gamma)$ and 
$\zeta_q \ast \eta_r = \pi_K(\delta) \ast \pi_L(\delta) = 
\pi_H(\delta)$. We define $\pP$ to have nodes of the form 
$\zeta_i \ast \eta_j$, starting with $\zeta_0 \ast \eta_0$,
so that the node immediately following $\zeta_i \ast \eta_j$ 
is 

\begin{equation} \label{eq:steps}
\begin{cases}
\zeta_{i+1} \ast \eta_j, & \text{if well defined,} \\
\zeta_i \ast \eta_{j+1}, & \text{if well defined but 
      $\zeta_{i+1} \ast \eta_j$ is not,} \\
\zeta_{i+1} \ast \eta_{j+1}, & \text{otherwise.}			
\end{cases} 
\end{equation}

\bigskip
\noindent
We leave to the reader to verify that, because $\alpha_m
(\pP_m) = \beta_m(\pP_{m+1})$, this is a well defined walk 
in $H$ with initial node $\zeta_0 \ast \eta_0 = \pi_H
(\gamma)$ and terminal node $\zeta_q \ast \eta_r = \pi_H
(\delta)$. By construction, we have $\pi_i(\pP) = \pi_i
(\qQ)$ for $k \le i \le m$ and $\pi_i(\pP) = \pi_i(\rR)$ 
for $m < i \le \ell+1$, and hence $\pi_i(\pP) = \pP_i$ for 
$k \le i \le \ell+1$. Finally, we note that $\pP$ traverses 
the polygons traversed by $\qQ$ or $\rR$ which do not 
intersect the fiber $f^{-1}(t_m) \cap \|\fF\|$ by steps 
which move $\zeta_i \ast \eta_j$ to the first two paths 
shown in~(\ref{eq:steps}), respectively, each exactly once 
by our assumptions on $\qQ$ and $\rR$, and the 2-dimensional 
faces of $\fF$ which intersect $f^{-1}(t_m) \cap \|\fF\|$ by 
steps which move $\zeta_i \ast \eta_j$ to the third path 
shown in~(\ref{eq:steps}), each exactly once by our 
assumptions on $\pP_m$ and $\pP_{m+1}$, and that these are 
precisely the polygons of~(\ref{eq:H-diagram}).
\qed

\section{Conclusions}
\label{sec:conclusions}

The following tables summarize our results on monotone 
paths and arborescences and indicate the problems which 
remain open. 

We have no reason to doubt
that Question~\ref{que:arb-min-all} on the minimum 
number of $f$-arborescences in dimensions $d \ge 4$ and 
Question~\ref{que:lower-diam-dim3} on the maximum diameter 
of flip graphs in dimension 3 have positive answers. For 
the minimum 
diameter of flip graphs, we expect that the diameter of 
$G(P,f)$ is bounded below by the integral part of half 
the number of facets for every 3-dimensional 
polytope $P$. In particular, we expect that the 
following conjecture is true.

\begin{conjecture} \label{conj:diam-min-dim3}
The minimum diameter of $G(P,f)$, when $P$ ranges over 
all 3-dimensional polytopes with $n$ vertices and $f$ 
ranges over all generic linear 
functionals on $P$, is equal to $\left\lfloor (n+5)/4
\right\rfloor$ for every $n \ge 4$.
This can be achieved by simple polytopes for every 
even $n$.
\end{conjecture}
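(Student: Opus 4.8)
The plan is to prove matching lower and upper bounds. For the lower bound I would prove the stronger inequality anticipated in the text, namely $\diam G(P,f) \ge \lfloor f_2(P)/2 \rfloor$ for every $3$-dimensional polytope $P$ and every generic $f$; since each vertex of $P$ has degree at least three, Euler's relation gives $f_2(P) = f_1(P) - f_0(P) + 2 \ge 3f_0(P)/2 - f_0(P) + 2$, so $f_2(P) \ge \lceil n/2 \rceil + 2$, and as $\lfloor (\lceil n/2 \rceil + 2)/2 \rfloor = \lfloor (n+5)/4 \rfloor$ this yields the required bound. The key is a homological estimate on the flip graph. View $\partial P$ with its CW structure coming from the faces of $P$ and use coefficients in $\ZZ/2$; for an $f$-monotone path $\gamma$ let $[\gamma] \in C_1(\partial P)$ be the sum of its edges, so $\partial[\gamma] = v_{\min} + v_{\max}$. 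For $f$-monotone paths $\gamma, \gamma'$ the cycle $[\gamma] + [\gamma']$ bounds, and since $H_2(\partial P) = \ZZ/2$ there are exactly two $2$-chains $Z^+$ and $Z^-$ with $\partial Z^\pm = [\gamma] + [\gamma']$; being complementary subsets of the set of $2$-faces, $|Z^+| + |Z^-| = f_2(P)$. A polygon flip across a $2$-face $F$ replaces one monotone side of $\partial F$ on the current path by the other, hence alters the $1$-chain of the path by $\partial[F]$. Consequently, along any walk $\gamma = \eta_0, \dots, \eta_L = \gamma'$ in $G(P,f)$ with flips across faces $F_1, \dots, F_L$ one has $[\gamma] + [\gamma'] = \partial\bigl(\sum_{i=1}^L [F_i]\bigr)$, so $\sum_{i=1}^L [F_i] \in \{Z^+, Z^-\}$; as this $2$-chain is supported on at most $L$ distinct faces, $L \ge \min(|Z^+|, |Z^-|)$. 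Therefore $d_G(\gamma, \gamma') \ge \min(|Z^+|, |Z^-|)$ for all pairs $\gamma, \gamma'$.

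It then remains to exhibit a \emph{balanced} pair, meaning $f$-monotone paths $\gamma, \gamma'$ with $\{|Z^+|, |Z^-|\} = \{\lfloor f_2(P)/2 \rfloor, \lceil f_2(P)/2 \rceil\}$. Fixing $\gamma$, the quantity $\gamma' \mapsto \min(|Z^+|, |Z^-|)$ vanishes at $\gamma' = \gamma$ and changes by at most one along each edge of the connected graph $G(P,f)$, so by a discrete intermediate-value argument it suffices to find a single pair with $\min(|Z^+|, |Z^-|) \ge \lfloor f_2(P)/2 \rfloor$. The natural place to look is the subgraph of coherent $f$-monotone paths, which by \cite{BS92, BKS94} is the boundary cycle of the two-dimensional monotone path polytope $\Sigma(P,f)$ and carries the essential $1$-cycle of the space of cellular strings; the plausible (and, in all examples I have checked — simplices, cubes, prisms — correct) statement is that traversing this cycle once performs a flip across each $2$-face of $P$ an odd number of times, in which case the evolving $2$-chain $Z^+(\gamma, \cdot)$ runs from $\emptyset$ to the chain of all $2$-faces and hence attains size $\lfloor f_2(P)/2 \rfloor$ along the way. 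Proving this — equivalently, establishing the expected bound $\diam G(P,f) \ge \lfloor f_2(P)/2 \rfloor$ in full generality — is the step I expect to be the main obstacle; the homological reduction above turns it into a statement about how the monotone path polytope wraps around $\partial P$.

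For the upper bound I would use the polytopes that minimize the number of monotone paths in Theorem~\ref{thm:path-min-dim3}: the prism over an $(n/2)$-gon when $n$ is even, and the wedge of an $((n+1)/2)$-gon over one of its vertices when $n$ is odd. These have exactly $\lceil n/2 \rceil + 2$ facets (the minimum possible) and, for the relevant $f$, satisfy $\mu(P,f) = \lceil n/2 \rceil + 2$. The claim to establish is that for these pairs $(P,f)$ every $f$-monotone path is coherent; then $G(P,f)$ equals the graph of the $2$-polytope $\Sigma(P,f)$, which is therefore a $(\lceil n/2 \rceil + 2)$-gon, so $G(P,f)$ is a single cycle and $\diam G(P,f) = \lfloor (\lceil n/2 \rceil + 2)/2 \rfloor = \lfloor (n+5)/4 \rfloor$. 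For the cube this is the classical fact that the monotone path polytope is a hexagon; I would extend it to prisms over arbitrary polygons and to vertex wedges by listing their (highly constrained, since every fibre is a ``thin'' polygon) $f$-monotone paths explicitly and producing for each one a secondary functional realizing it by the shadow-vertex rule. This coherence check is the secondary difficulty. Finally, combining the two halves: the constructions show the minimum diameter is at most $\lfloor (n+5)/4 \rfloor$; the homological lower bound together with a balanced pair shows it is at least $\lfloor f_2(P)/2 \rfloor \ge \lfloor (n+5)/4 \rfloor$ for every $P$; and for even $n$ the extremal polytope is simple, which proves the conjecture.
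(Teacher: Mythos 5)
First, note that the statement you are proving is stated in the paper only as Conjecture~\ref{conj:diam-min-dim3}; the paper offers no proof of it, so there is nothing to compare your argument against except the paper's own partial results (Theorem~\ref{thm:diam-max} and Lemma~\ref{lem:diam-stack}, which concern the \emph{maximum} diameter). Your mod-$2$ homological inequality is correct and is a genuinely useful reduction: a flip across a $2$-face $F$ does change the $1$-chain of the path by $\partial[F]$, the two $2$-chains $Z^\pm$ with $\partial Z^\pm = [\gamma]+[\gamma']$ are complementary in $\partial P \cong S^2$, and hence $d_G(\gamma,\gamma') \ge \min(|Z^+|,|Z^-|)$; together with $f_2(P) \ge \lceil n/2\rceil+2$ from Euler's relation, this would give the conjectured lower bound \emph{provided} a balanced pair exists. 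But that existence is exactly where your argument stops being a proof. Your discrete intermediate-value reduction is fine as far as it goes (along any walk the accumulated $2$-chain changes support size by one at each step, so it suffices to exhibit a walk whose total accumulated chain is the fundamental class rather than $0$), yet the claim that the coherent cycle --- the boundary of the monotone path polytope $\Sigma(P,f)$ --- flips every $2$-face an odd number of times is asserted only on the strength of a few examples. Note that this claim already forces the number of coherent $f$-monotone paths to be at least $f_2(P)$ and congruent to it mod $2$, which is itself a nontrivial unproved statement about $2$-dimensional fiber polytopes. Without it you have established $d_G(\gamma,\gamma') \ge \min(|Z^+|,|Z^-|)$ for each pair, but no lower bound on the diameter at all, since a priori every pair could be very unbalanced.

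The upper bound half has a gap of the same kind. That $G(P,f)$ is a single cycle of length $\lceil n/2\rceil+2$ for prisms and vertex-wedges would indeed follow if every $f$-monotone path on these polytopes were coherent (the paper does assert that the coherent paths induce the graph of the $2$-dimensional polytope $\Sigma(P,f)$), and the arithmetic $\lfloor(\lceil n/2\rceil+2)/2\rfloor=\lfloor(n+5)/4\rfloor$ checks out; but the coherence claim is only verified for the cube and is otherwise deferred to an unexecuted case analysis with explicitly constructed secondary functionals. A more elementary route for this half, bypassing coherence entirely, would be to show directly that every node of $G(P,f)$ has degree exactly $2$ for these $(P,f)$ --- for instance by classifying which $2$-faces admit a flip at a given path --- and then invoke connectivity of $G(P,f)$ together with $\mu(P,f)=\lceil n/2\rceil+2$ from Theorem~\ref{thm:path-min-dim3} to conclude it is one cycle. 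In summary: the skeleton is promising and the homological lower-bound mechanism is sound, but both of the steps you yourself flag as the ``main obstacle'' and the ``secondary difficulty'' are genuinely open, so the proposal does not constitute a proof of the conjecture.
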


\medskip
\begin{center}
\begin{table}[h]
\begin{tabu}{c|c|[2pt]c|c}
\tabucline[2pt]{-}
\multicolumn{2}{c|[2pt]}{\# of arborescences} & all polytopes &  simple polytopes \\ \tabucline[2pt]{-}
\multirow{2}{*}{$d=3$}  & upper bound & Theorem \ref{thm:arb-max-all} & \multirow{2}{*}{Corollary \ref{thm:arb-simple}}   \\ \cline{2-3} 
                   & lower bound & Theorem \ref{thm:arb-min-dim3-all} & \\
                   \hline
\multirow{2}{*}{$d\geq 4$}  & upper bound & Theorem \ref{thm:arb-max-all} & Corollary \ref{thm:arb-simple} \\ \cline{2-4} 
                   & lower bound & Question \ref{que:arb-min-all} & Corollary \ref{thm:arb-simple}   \\ \tabucline[2pt]{-}
\end{tabu}
\caption{Summary for $f$-arborescences}
\end{table}
\end{center}

\begin{center}
\begin{table}[h]
\begin{tabu}{c|c|[2pt]c|c}
\tabucline[2pt]{-}
\multicolumn{2}{c|[2pt]}{\# of monotone paths} & all polytopes &  simple polytopes \\ \tabucline[2pt]{-}
\multirow{2}{*}{$d=3$}  & upper bound & Theorem \ref{thm:path-max} &  Conjecture \ref{conj:path-max-simple}, Proposition 
                          \ref{cor:path-max-simple} \\ \cline{2-4} 
                   & lower bound &  \multicolumn{2}{c}{Theorem \ref{thm:path-min-dim3}}  \\ \hline
\multirow{2}{*}{$d\geq 4$}  & upper bound & Remark \ref{thm:path-max-alldim} & open \\ \cline{2-4} 
                   & lower bound & \multicolumn{2}{c}{Proposition \ref{cor:path-min-alldim}}   \\ \tabucline[2pt]{-}
\end{tabu}
\caption{Summary for $f$-monotone paths}
\end{table}
\end{center}

\begin{center}
\begin{table}[h]
\begin{tabu}{c|c|[2pt]c|c}
\tabucline[2pt]{-}
\multicolumn{2}{c|[2pt]}{diameter of flip graph} & all polytopes & simple polytopes \\ \tabucline[2pt]{-}
\multirow{2}{*}{$d=3$}  & upper bound & Theorem \ref{thm:diam-max}, Question \ref{que:lower-diam-dim3} & open \\ \cline{2-4} 
                   & lower bound &  \multicolumn{2}{c}{Conjecture \ref{conj:diam-min-dim3}}  \\ \hline
\multirow{2}{*}{$d\geq 4$}  & upper bound & open & open \\ \cline{2-4} 
                   & lower bound & open & open \\ \tabucline[2pt]{-}
\end{tabu}
\caption{Summary for the diameter of flip graphs}
\end{table}
\end{center}

The outdegrees of the vertices of $\omega(P,f)$ play an 
important role in the proofs of Theorems~\ref{thm:main-3d} 
and~\ref{thm:main-d>3}. 
It seems a very interesting problem to characterize, or 
at least obtain significant information about, the possible 
multisets of these outdegrees when $P$ ranges over all 
polytopes of given dimension and number of vertices and 
$f$ ranges over all generic linear functionals on $P$. 
Finally, it would be interesting to address the 
questions raised in this paper for \emph{coherent} 
$f$-monotone paths as well. Their number typically 
grows much slower than the total 
number of $f$-monotone paths \cite{ADRS00}.

\medskip
\noindent {\bf Acknowledgements:} The first author warmly thanks the Department of Mathematics at UC Davis for hosting him and providing an excellent academic atmosphere during the Fall of 2019, when this paper was prepared. The second and third authors are grateful for the support received through NSF grants DMS-1818969 and HDR TRIPODS.


\bibliographystyle{plainurl}
\bibliography{extremalBib.bib}
\end{document}